\newtheorem{theorem}{Theorem}[section]
\newtheorem{lemma}[theorem]{Lemma}
\newtheorem{proposition}[theorem]{Proposition}
\newtheorem{corollary}[theorem]{Corollary}
\theoremstyle{definition}
\newtheorem{definition}[theorem]{Definition}
\newtheorem{problem}[theorem]{Problem}
\newtheorem{example}[theorem]{Example}
\newtheorem{remark}[theorem]{Remark}
\newtheorem *{Theorem 1}{Theorem 1}
\newtheorem *{Theorem 2}{Theorem 2}
\newtheorem *{Theorem 5}{Theorem 5}
\newtheorem *{Theorem 3}{Theorem 3}
\newtheorem *{Theorem 4}{Theorem 4}
\newtheorem *{Problem1}{The group ring isomorphism problem [GRIP]}
\newtheorem *{Problem2}{The integral group ring isomorphism problem [IGRIP]}
\newtheorem *{Problem3}{The twisted group ring isomorphism problem (TGRIP)}
\newcommand{\C}{{\mathbb C}}
\newcommand{\Q}{{\mathbb Q}}
\newcommand{\RNum}[1]{\uppercase\expandafter{\romannumeral #1\relax}}
\newcommand{\rNum}[1]{\lowercase\expandafter{\romannumeral #1\relax}}
\newcommand{\rk}{{\operatorname{rk}}}
\newcommand{\Tra}{{\operatorname{Tra}}}
\newcommand{\Ext}{{\operatorname{Ext}}}
\newcommand{\Hom}{{\operatorname{Hom}}}
\newcommand{\Gal}{{\operatorname{Gal}}}
\begin{document}
\title{The twisted group ring isomorphism problem over fields}
\address{Departament of Mathematics, Free University of Brussels, 1050 Brussels, Belgium}
\author{Leo Margolis}
\email{leo.margolis@vub.be}
\author{Ofir Schnabel}
\address{Department of Mathematics, ORT Braude College, 2161002 Karmiel, Israel}
\email{os2519@yahoo.com}

\thanks{The first author is a postdoctoral researcher of the Research Foundation Flanders (FWO - Vlaanderen).
We are grateful for the Technion - Israel Institute of Technology,
for supporting the first author's visit to Haifa}

\begin{abstract}
Similarly to how the classical group ring isomorphism problem
asks, for a commutative ring $R$, which information about a finite
group $G$ is encoded in the group ring $RG$, the twisted group
ring isomorphism problem asks which information about $G$ is
encoded in all the twisted group rings of $G$ over $R$.

We investigate this problem over fields. We start with abelian groups and
show how the results depends on the roots of unity in $R$. In
order to deal with non-abelian groups we construct a
generalization of a Schur cover which exists also when $R$ is not
an algebraically closed field, but still linearizes all projective
representations of a group. We then show that groups from the
celebrated example of Everett Dade which have isomorphic group
algebras over any field can be distinguished by their twisted
group algebras over finite fields.
\end{abstract}

\maketitle

\noindent \textbf {2010 Mathematics Subject Classification:}
16S35, 20C25, 20K35.
\section{Introduction}\label{Intro}\pagenumbering{arabic} \setcounter{page}{1}
In \cite{MargolisSchnabel} we proposed a twisted version of the
celebrated group ring isomorphism problem (GRIP), namely ``the
twisted group ring isomorphism problem''(TGRIP).

Recall that for a finite group $G$ and a commutative ring $R$, the
group ring isomorphism problem asks whether the ring structure of $RG$
determines $G$ up to isomorphism. In other words, does the existence of a ring isomorphism $RG\cong
RH$ imply the existence of a group isomorphism $G\cong H$, for given groups $G$ and $H$?
Roughly speaking the twisted group
ring isomorphism problem asks if for a group $G$ and a commutative
ring $R$, the ring structure of all the twisted group rings of $G$
over $R$ determines the group $G$.
The role twisted group rings of $G$ over $R$ play for the projective representation theory is in
many ways the same played by the group ring $RG$ for the representation theory of $G$ over $R$,
as it was shown in the ground laying work of I. Schur \cite{Schur}. In this sense the (TGRIP) can also be understood as a question on how strongly the projective representation theory of a group influences its structure.
For results on the classical (GRIP) see \cite{RoggenkampScott, Sehgal1993, Hertweck} for
the case $R = \mathbb{Z}$. Also questions on character degrees, as
addressed e.g. in \cite{Isaacs, Navarro}, can be viewed as results
for the case $R = \mathbb{C}$.

We denote by $R^*$ the unit group in a ring $R$. For a $2$-cocycle $\alpha \in Z^2(G, R^*)$ the twisted group ring $R^\alpha G$ of $G$ over $R$
 with respect to $\alpha$ is the free $R$-module with basis $\{u_g\}_{g \in G}$ where the multiplication on the basis is defined via
\[u_g u_h = \alpha(g, h) u_{gh} \ \ \text{for all} \ \ g,h \in G \]
and any $u_g$ commutes with the elements of $R$. Notice that if we
consider $\alpha$ only as a function (not necessarily a $2$-cocycle) from
$G\times G$ to  $R^*$, then $R^\alpha G$ is associative if
and only if $\alpha$ is a $2$-cocycle, i.e. 
$$\alpha(g,h)\alpha(gh,k) = \alpha(g,hk)\alpha(h,k) \ \ \text{for all} \ \ g,h,k \in G.$$
The ring structure of
$R^{\alpha}G$ depends only on the cohomology class $[\alpha]\in
H^2(G,R^*)$ of $\alpha$ and not on the particular $2$-cocycle.
Notice that the ring $R$ is central in the twisted group ring
$R^{\alpha}G$ and correspondingly the associated second cohomology
group is with respect to a trivial action of $G$ on $R^*$. See
\cite[Chapter 3]{KarpilovskyProjective} for details.

Let $G$ and $H$ be groups and let $R$ be a commutative ring. We define
an equivalence relation which corresponds to the regular (GRIP) by
$G \Delta_R H$ if and only if $RG \cong RH$, and the twisted
problem is defined using a refinement of this relation as follows.

\begin{definition}
Let $R$ be a commutative ring and let $G$ and $H$ be finite groups. We say that $G \sim_R H$ if there exists a group isomorphism
$$\psi :H^2(G,R^*)\rightarrow H^2(H,R^*)$$
such that for any $[\alpha] \in H^2(G,R^*)$ there is a ring isomorphism
$$R^{\alpha}G\cong R^{\psi (\alpha)}H.$$
\end{definition}

It is easy to see that $\sim_R$ is indeed a refinement of
$\Delta_R$, cf. Corollary~\ref{cor:1dimtrivcoho}. The main problem
we are interested in is the following.

\begin{Problem3}
For a given commutative ring $R$, determine the $\sim_R$-classes.
Answer in particular, for which groups $G \sim_R H$ implies $G \cong H$.
\end{Problem3}
In \cite{MargolisSchnabel} we investigated (TGRIP) over the
complex numbers and gave some results for families of groups, e.g.
abelian groups, $p$-groups, groups of central type and groups of
cardinality $p^4$ and $p^2q^2$ for $p,q$ primes. In this paper we
investigate (TGRIP) and related problems for fields other than
$\mathbb{C}$. In particular, our main motivation is to explore:
\begin{enumerate}
  \item The differences between the (TGRIP) and the (GRIP).
  \item The differences between the (TGRIP) over $\mathbb{C}$ and the (TGRIP) over other fields.
\end{enumerate}

For example we showed in \cite[Lemma 1.2]{MargolisSchnabel} that any abelian
group is a $\sim_{\mathbb{C}}$-singleton which is clearly not true
for $\Delta_{\mathbb{C}}$. We show that
over other fields $F$, abelian groups are no longer necessarily
$\sim_{F}$-singletons (see Example~\ref{ex:C9}). This is particularly interesting since,
when $\text{char}(F)$ does not divide $|G|$, i.e. the semi-simple
case, $G \Delta_{F} H$ implies $G \Delta_{\mathbb{C}} H$, while we
show that $G \sim_{F} H$ not necessarily implies $G
\sim_{\mathbb{C}} H$. In this sense, $\mathbb{C}$ is no longer
``the worst" field in distinguishing between groups in the
semi-simple case.

A main result is related to the so called Dade's Example. In
\cite{Dade} E. Dade gave a family of examples of non-isomorphic
groups $G$ and $H$ of order $p^3q^6$ for $p,q$ primes satisfying some
arithmetic conditions, such that $FG \cong FH$ for any field
$F$ while $\mathbb{Z}G\not \cong \mathbb{Z}H$. Consequently, the
ring structure of all the group rings of a group over all fields
is not sufficient to determine the group up to isomorphism.
We prove:

\begin{Theorem 1}
Let $G$ and $H$ be the groups from Dade's example of even order. Then
there exists an infinite number of fields $F$ such that $G \not
\sim_{F} H$.
\end{Theorem 1}

A key ingredient in the proof of Theorem 1, and in general for
studying the ring structure of twisted group rings over fields, is
a generalization of the Schur cover which we develop in Section
\ref{UnSchur}. This generalization exists also when the field is
not algebraically closed. The idea for this kind of cover was
introduced originally by Yamazaki \cite{YamazakiUnschur} and for
this reason we call it a \textit{Yamazaki cover}. This object
generalizes the Schur cover of a group $G$ in the sense that over
not necessarily algebraically closed fields, any projective
representation of $G$ is projectively equivalent to a linear
representation of its Yamazaki cover.

In Theorem~\ref{th:YamazakiGT} we give a group theoretical criterion how a Yamazaki cover of a group can be recognized.
This mimics the well known group theoretical criterion to recognize a Schur cover, but additional properties need to be checked for the Yamazaki cover.
After the construction of Yamzaki covers for both groups from Dade's example we prove Theorem 1.

As mentioned above, a Yamazaki cover may exist when the field $F$
is not necessarily algebraically closed. Throughout this paper,
for a finite group $G$ and a field $F$, we will assume that
$H^2(G,F^*)\cong H^2(G,t(F^*))$. It turns out that this is a
sufficient (and necessary) condition for the existence of a Yamazaki cover of $G$
over $F$. Here, $t(F^*)$ denotes the torsion subgroup of $F^*$. It was shown by Yamazaki that this condition is equivalent to $F^* = (F^*)^{\exp(G/G')}t(F^*)$ \cite[Proposition 3]{YamazakiUnschur}.
For example, for any finite group $G$, the field $F$ can be the complex numbers, the real numbers or any finite field. However, for any
non-trivial $G$ we cannot choose $F$ to be the rational numbers.

The following problem is natural in view of Theorem 1.
\begin{problem}
Let $G$ and $H$ be groups such that $G \sim_{F} H$ for all fields
$F$.
\begin{enumerate}
  \item Is it true that $G$ and $H$ are necessarily isomorphic?
  \item Find families of groups for which the answer to the question above is positive.
\end{enumerate}
\end{problem}

An example of such a family are abelian groups. In fact, if
two abelian groups $G$ and $H$ satisfy $\mathbb{C}G\cong
\mathbb{C}H$ and $H^2(G,\mathbb{C}^*)\cong H^2(H,\mathbb{C}^*)$
then $G \cong H$ (see \cite[Lemma 1.2]{MargolisSchnabel}).
Moreover, it is clear that 
$$G\cong H\Rightarrow G\sim_{F}H\Rightarrow FG\cong FH \text{ and } H^2(G,F^*)\cong H^2(H,F^*)$$
and we have shown \cite{MargolisSchnabel} that in general the converse implications are not true. It is natural to ask if for abelian groups the converse implications are true, and if not, which other conditions we can impose on the field such that they will be true.
In the following theorem we give an answer to this question.  

\begin{Theorem 2}
\begin{enumerate}
\item Let $G$ and $H$ be abelian groups and let $e$ be the exponent of $G$. For any positive integer $n$ let $\zeta _n$ be a fixed primitive $n$-th root of unity in $\mathbb{C}$. Then $G \sim_F H$ implies $G \cong H$ for any field $F$ of characteristic $0$ such that:
\begin{itemize}
\item If $p$ is an odd prime divisor of $e$, then $F$ contains $\zeta_p$ or the inclusion $F \cap \Q(\zeta_{p^2}) \subseteq \Q(\zeta_p)$ holds.
\item If $2$ is a divisor of $e$, then $F$ contains $\zeta_4$ or $F \cap \Q(\zeta_8) = \Q$.
\end{itemize}
However,
\item There
exist non-isomorphic abelian groups $G$ and $H$ and a finite field
$F$ such that $FG$ is semisimple and $G \sim_{F} H$.
   In particular, $\text{char}(F) \nmid |G|$ does not imply that $\sim_{F}$ is a refinement of $\sim_{\mathbb{C}}$.
  \item  There exist abelian groups $G$ and $H$ and a finite field $F$ such that $FG \cong FH$ and
  $H^2(G,F^*) \cong H^2(H,F^*)$, but $G \not \sim_{F} H$.
\end{enumerate}
\end{Theorem 2}

The paper is organized as follows. Most of Section \ref{pre} is
devoted to well-known definitions and tools related to twisted
group rings and the second cohomology group of a finite group.
However, we also prove in Proposition~\ref{prop:abcomesfromEXT} an
interesting result about simple commutative components of twisted
group rings. In Section \ref{abeliangroups} we deal with the relation $\sim_F$ for abelian groups. In particular we prove
Theorem 2. In Section \ref{UnSchur} we introduce and construct the
Yamazaki cover of a group which is a generalization of a Schur
cover of a group which exists also when $F$ is not algebraically
closed. Lastly, in Section \ref{Dade} we prove Theorem 1 by
constructing the Yamazaki covers for the groups from Dade's
example and then evaluating their Wedderburn decompositions.

\section{Preliminaries}\label{pre}
In this section we will recall some definitions and tools that
will be useful later on. Recall that throughout this paper we will
assume for a finite group $G$ and a field $F$ that
$H^2(G,F^*)\cong H^2(G,t(F^*))$, although it is sometimes
redundant.

Clearly two main objects that we need to understand in order to
study the (TGRIP) are the ring structure of twisted group rings, and
the structure of the second cohomology group of a finite group.

We use standard group theoretical notation. In particular we denote by $C_n$ a cyclic group of order $n$, by $\circ(g)$ the order of a group element $g$ in a group $G$, by $Z(G)$ the center and by $G'$ the commutator subgroup of $G$, by $\operatorname{exp}(G)$ the exponent of $G$, by $\operatorname{GL}(V)$ the general linear group acting on a vector space $V$ and by $\operatorname{PGL}(V)$  the projective general linear group, i.e. $\operatorname{GL}(V)/Z(\operatorname{GL}(V))$. Moreover for an abelian group $G$ we denote by $\text{rk}(G)$ the rank of $G$, i.e. the minimal number of generators of $G$. We denote by $\mathbb{F}_q$ a finite field of order $q$.

\subsection{Projective representations and twisted group rings}\label{sec:ProjRep}
The theory presented here is standard and can be found e.g. in \cite[Chapter 3]{KarpilovskyProjective}.
A {\it projective representation} of a group $G$ over a field $F$
is a map
$$\eta: G\rightarrow  GL(V),$$
where $V$ is an $F$-vector space, such that the composition of
$\eta$ with the natural projection from $GL(V)$ to
$PGL(V)$ is a group homomorphism.
 As in the ordinary case, two projective representations are equivalent if
 they differ by a basis change of $V$.
 A projective representation
$\eta: G\rightarrow  GL(V)$ is {\it irreducible} if $V$ admits no
proper $G$-subspace. Two projective representations $\eta_1: G \rightarrow GL(V_1)$ and $\eta_2: G \rightarrow GL(V_2)$ are called \textit{projectively equivalent} if there is a map $\mu: G \rightarrow F^*$ satisfying $\mu(1) = 1$ and a vector space isomorphism $f:V_1 \rightarrow V_2$ such that
$$\eta_1(g) = \mu(g) f^{-1} \eta_2(g) f $$
for every $g \in G$.

With the above notation, we can define $\alpha \in Z^2(G,F^*)$ by
$$\alpha(g_1,g_2)=\eta (g_1) \eta(g_2) \eta (g_1g_2)^{-1},$$
and refer to $\eta$ as an $\alpha$-representation of $G$. For a
fixed 2-cocycle $\alpha$, the set of projective equivalence
classes of irreducible $\alpha$-representations
of $G$ is denoted by $\text{Irr}(G,\alpha)$. As in the
ordinary case, there is a natural correspondence between
projective representations of $G$ over $F$ with an associated
2-cocycle $[\alpha]$, and $F^\alpha G$-modules.

A projective representation $\eta: G\rightarrow GL(V)$ can be extended to a homomorphism of algebras
\begin{equation*}
\begin{array}{rcl}
 \tilde{\eta}: F^\alpha G & \rightarrow &\operatorname{End}_F(V)\\
 \sum_{g\in G} a_g u_g & \mapsto &\sum_{g\in G} a_g \eta (g).
\end{array}
\end{equation*}
For any ring $R$ and an irreducible $R$-module $M$, there is a
surjective ring homomorphism $R\rightarrow \operatorname{End}_D M$ for
$D=\operatorname{End}_R M$. A generalized Maschke's theorem states that if
$\text{char}(F) \nmid |G|$ then any twisted group algebra $F^{\alpha}G$
is semisimple. Therefore, with the above notations for any
irreducible $\alpha$-representation $V$ of $G$, the ring $\operatorname{End}_D V$ can be
identified with one of the components of the Artin-Wedderburn
decomposition of the semisimple algebra $F^{\alpha}G$. In other
words, $F^{\alpha}G$ admits a decomposition
\begin{equation*}
F^{\alpha}G=\bigoplus_{[W]\in\text{Irr}(G,\alpha)}\operatorname{End}_{D_W}(W),
\end{equation*}
where $D_W=\operatorname{End}_{F^{\alpha}G} W$. In particular, if $F$ is a
finite field such that $\text{char}(F) \nmid |G|$ then
\begin{equation*}
F^{\alpha}G=\bigoplus_{[W]\in\text{Irr}(G,\alpha)}\operatorname{End}_{F_W}(W),
\end{equation*}
where here $F_W$ is a field extension of $F$ corresponding to $W$.

In some of our examples later on we will use the structure of the
center of a twisted group algebra. Let $G$ be a finite group and
let $\alpha \in Z^2(G,F^*)$. An element $g\in G$ is called
{\it $\alpha$-regular} if $\alpha(g,h)=\alpha(h,g)$ for any $h\in G$
which commutes with $g$. Note that if $g$ is $\alpha$-regular and
$\beta \in Z^2(G, F^*)$ such that $[\alpha] = [\beta]$ in
$H^2(G,F^*)$  then $g$ is also $\beta$-regular. The following is
well known (see e.g \cite[Theorem 2.4]{NauwelaertsVanOystayen}).
\begin{lemma}\label{lemma:centeroftga}
Let $G$ be a finite group, let $\alpha \in Z^2(G,F^*)$, let $g\in
G$ be an $\alpha$-regular element and let $T$ be a transversal of
the centralizer of $g$ in $G$. Then
\begin{enumerate}
\item The element
$$S_g=\sum _{t\in T} u_tu_gu_t^{-1}$$
is a central element in $F^{\alpha}G$.
\item The elements $S_g$,
where $g$ runs over all the $\alpha$-regular conjugacy classes in
$G$, form an $F$-basis for the center of $F^{\alpha}G$.
\end{enumerate}
\end{lemma}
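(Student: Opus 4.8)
The plan is to prove both parts from a single structural observation: for an $\alpha$-regular element $g$, the conjugate $u_t u_g u_t^{-1}$ depends only on the coset $t\,C_G(g)$, where $C_G(g)$ denotes the centralizer of $g$. First I would normalize $\alpha$ so that $u_1 = 1$, making each $u_x$ invertible and allowing me to write $u_x u_y u_x^{-1} = \lambda(x,y)\, u_{xyx^{-1}}$ for a scalar $\lambda(x,y) \in F^*$. The crucial input is that if $z \in C_G(g)$ then $z$ commutes with $g$, so by $\alpha$-regularity $\alpha(z,g) = \alpha(g,z)$; since $u_z u_g = \alpha(z,g) u_{zg}$ and $u_g u_z = \alpha(g,z) u_{zg}$, this forces $u_z u_g = u_g u_z$, i.e. $u_z u_g u_z^{-1} = u_g$. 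Writing $u_{tz} = \alpha(t,z)^{-1} u_t u_z$ and letting the central scalars cancel then gives $u_{tz} u_g u_{tz}^{-1} = u_t (u_z u_g u_z^{-1}) u_t^{-1} = u_t u_g u_t^{-1}$, so the summand is constant on cosets and $S_g$ is independent of the transversal $T$.

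For part (1) I would show $u_h S_g u_h^{-1} = S_g$ for every $h \in G$. Using $u_h u_t = \alpha(h,t) u_{ht}$ and cancelling scalars, one finds $u_h (u_t u_g u_t^{-1}) u_h^{-1} = u_{ht} u_g u_{ht}^{-1}$. Because left multiplication by $h$ permutes the left cosets of $C_G(g)$, the set $\{ht : t \in T\}$ is again a transversal; combined with the coset-independence above, the sum over $t$ is unchanged, so $u_h S_g u_h^{-1} = S_g$. Since the $u_h$ span $F^\alpha G$, the element $S_g$ is central.

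For part (2), linear independence is immediate: $S_g = \sum_{t \in T} \lambda(t,g)\, u_{tgt^{-1}}$ is supported on the conjugacy class of $g$ with all coefficients in $F^*$, and distinct $\alpha$-regular classes have disjoint supports. To prove spanning, I would take a central $z = \sum_x a_x u_x$ and compare coefficients in $u_h z u_h^{-1} = z$, obtaining $a_y = a_{h^{-1}yh}\,\lambda(h, h^{-1}yh)$ for all $h,y$. Choosing $h \in C_G(x)$ for an $x$ with $a_x \neq 0$ forces $\lambda(h,x)=1$, which unwinds to $\alpha(h,x)=\alpha(x,h)$ for all $h$ commuting with $x$; hence every element in the support of $z$ is $\alpha$-regular. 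Finally, for a conjugate $y = sgs^{-1}$ of an $\alpha$-regular $g$, the relation with $h=s$ yields $a_{sgs^{-1}} = a_g\,\lambda(s,g)$, which is precisely the coefficient of $u_{sgs^{-1}}$ in $a_g S_g$. Thus the restriction of $z$ to each class equals $a_{g_C} S_{g_C}$, giving $z = \sum_C a_{g_C} S_{g_C}$ over the $\alpha$-regular classes, and the $S_g$ span $Z(F^\alpha G)$.

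The main obstacle is the careful bookkeeping of the cocycle scalars and ensuring $\alpha$-regularity is invoked at exactly the right points — in particular, recognizing that the scalar $\lambda(s,g)$ that appears when reconstructing a central element in part (2) is the same coset-invariant factor that makes $S_g$ well defined in part (1). Once the coset-independence of $u_t u_g u_t^{-1}$ is established, the remaining arguments are direct coefficient comparisons.
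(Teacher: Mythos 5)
Your proof is correct. There is nothing in the paper to compare it against: the authors state the lemma as well known and cite Nauwelaerts--Van Oystaeyen rather than proving it, so your write-up supplies a complete, self-contained argument where the paper has none. All the essential points are in place: after normalizing $u_1=1$ you correctly identify that for commuting $h,x$ the conjugation scalar satisfies $\lambda(h,x)=\alpha(h,x)\alpha(x,h)^{-1}$, so $\alpha$-regularity is exactly the condition that centralizer elements conjugate $u_g$ trivially; the resulting coset-independence of $u_tu_gu_t^{-1}$ simultaneously makes $S_g$ well defined, gives part (1) via the permutation of cosets under left multiplication, and identifies the coefficient $\lambda(s,g)$ needed to reconstruct a central element class by class in part (2); and the disjoint supports of the $S_g$ over distinct classes give linear independence. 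The only point worth making explicit in a final version is that the relation $a_y=a_{h^{-1}yh}\lambda(h,h^{-1}yh)$ with $\lambda$ invertible shows the support of a central element is a union of conjugacy classes, which you use implicitly when summing over classes.
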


\subsection{The second cohomology group of a finite group}
The second cohomology group of a group $G$ over the complex
numbers in denoted by $M(G)$ and is called the \textit{Schur multiplier}.
An important tool to understand $H^2(G,F^*)$ is the following
exact sequence (see \cite[Theorem 11.5.2]{KarpilovskyVolII})
\begin{equation}\label{eq:UCT}
1\rightarrow \operatorname{Ext}(G/G',F^*) \rightarrow
H^2(G,F^*)\rightarrow \operatorname{Hom}(M(G),F^*)\rightarrow 1.
\end{equation}
Moreover, this sequence splits (not canonically). Here, for
abelian groups $G,A$
$$\text{Ext}(G,A)=\{[\alpha]\in H^2(G,A) \  | \ \alpha \text{ is symmetric}\},$$
where a cocycle $\alpha \in Z^2(G,A)$ is called \textit{symmetric} if
$\alpha (x,y)=\alpha (y,x)$ for all $x,y\in G$ (see \cite[Chapter 2, \S
1]{KarpilovskyProjective}). Notice that $\operatorname{Ext}(G,A)$ corresponds to
equivalence classes of abelian central extensions of a group $G$
by a group $A$. The map in~\eqref{eq:UCT} from $\operatorname{Ext}(G/G',F^*)$ to
$H^2(G,F^*)$ is the restriction of the inflation map hereby
explained. Let $G$ be a finite group with normal subgroup $N$, let
$A$ be an abelian group and let $\varphi: G\rightarrow G/N$ be the
quotient map. Then, for any $\beta \in Z^2(G/N,A)$ we can define
$\alpha \in Z^2(G,A)$ by
$$\alpha (x,y)=\beta(\varphi (x),\varphi (y)).$$
The map from $Z^2(G/N,A)$ to $Z^2(G,A)$ sending $\beta$ to
$\alpha$ induces a map
$$\text{inf}:H^2(G/N,A)\rightarrow H^2(G,A)$$
which is called the \textit{inflation map}. The map
in~\eqref{eq:UCT} from $\operatorname{Ext}(G/G',F^*)$ to $H^2(G,F^*)$ is the
restriction to the subgroup $\operatorname{Ext}(G/G',F^*)$ of the inflation map
from $H^2(G/G',F^*)$ to $H^2(G,F^*)$. In the sequel we will
sometimes abuse notations and denote the image of this map in
$H^2(G,F^*)$ as $\operatorname{Ext}(G/G',F^*)$ and its complement
in $H^2(G,F^*)$ by $\operatorname{Hom}(M(G),F^*)$

For the sake of completeness and for later use, before going
forward with the description of the second cohomology group, we
would like to introduce a third map which is associated to the
second cohomology group. Let
$$1\rightarrow
N \rightarrow H\overset{\alpha}\rightarrow G\rightarrow 1$$ be a
central extension, i.e. $N$ is a subgroup contained in the center of $H$ such that $H/N \cong G$.
Let $\mu$ be a section of $\alpha$ and define
$f\in Z^2(G,N)$ by $f(x,y)=\mu (x) \mu (y) \mu (xy)^{-1}$. Then,
for any abelian group $A$ and any $\chi \in \operatorname{Hom}(N,A)$ we have $\chi
\circ f \in Z^2(G,A)$ and the cohomology class $[\chi \circ f]$ does not depend on the
choice of $\mu$.

\begin{definition}\label{def:Tra}
With the above notation, the map $\operatorname{Tra}: \operatorname{Hom}(N,A)\rightarrow
H^2(G,A)$ defined by $\chi \mapsto [\chi \circ f]$ is called the
\emph{transgression map}.
\end{definition}

We like to point out that the three maps mentioned above,
inflation, restriction and transgression, are connected to each
other as demonstrated in the celebrated Hochschild and Serre exact
sequence.

Now recall that (see e.g. \cite[Corollary 2.3.17]{KarpilovskyProjective}) for any natural numbers $n_1$,...,$n_r$
\begin{equation}\label{eq:EXTdecomposition}
\operatorname{Ext}(\Pi _{i=1}^rC_{n_r}, F^*) \cong \Pi _{i=1}^r
\operatorname{Ext}(C_{n_r}, F^*).
\end{equation}
Therefore, in order to understand $\operatorname{Ext}(G/G',F^*)$
it is sufficient to understand the description of
$\operatorname{Ext}(C_n,F^*)\cong H^2(C_n,F^*)$. This is well
known (see e.g. \cite[Theorem 1.3.1]{KarpilovskyProjective}):
\begin{equation}\label{eq:cohoofcyclic}
\operatorname{Ext}(C_n,F^*)\cong H^2(C_n,F^*)\cong F^*/(F^*)^n.
\end{equation}
Notice that by our assumption that always $H^2(G,F^*)\cong
H^2(G,t(F^*))$, we deduce that $H^2(C_n,F^*)\cong F^*/(F^*)^n\cong
t(F^*)/t(F^*)^n$. This is a finite cyclic group for any field $F$
as any two elements $a,b \in t(F^*)$ generate a finite, and hence cyclic, group and so also $\langle a, b \rangle / \langle a,b \rangle^n$ is cyclic.

We will use the above to recall the known structure of the second
cohomology group of abelian groups (see e.g. \cite[Corollary in \S
2.2]{YamazakiAbelian}).

Let $G$ be an abelian group. Then $G$ admits a decomposition
\begin{equation}\label{eq:decompofabeliangroupptok}
G \cong C_{n_1}\times C_{n_2}\times \ldots \times C_{n_r} = \langle
x_1 \rangle \times \langle x_2 \rangle\times \ldots \times \langle
x_r \rangle
\end{equation}
such that $n_i$ is a divisor of $n_{i+1}$ for any $1\leq i\leq
r-1$. Clearly,
\begin{equation}\label{eq:STforAB}
\operatorname{Ext}(G/G',F^*) \cong \prod _{i=1}^r F^*/(F^*)^{n_i}.
\end{equation}
We want to describe $\operatorname{Hom}(M(G),F^*)$. First notice,
that if $g$ and $h$ are commuting elements in a group $G$ with
orders $n$ and $m$ correspondingly, then $[u_g,u_h]=\lambda$ in
the twisted group algebra $ F^{\alpha}G$, and $\lambda$ is a root
of unity dividing $\gcd(m,n)$. This follows directly from the fact
that for any $x\in G$ the element $u_x^{\circ (x)}$ is central in $
F^{\alpha}G$ and therefore $[u_g^{\circ (g)},u_h]=\lambda ^{\circ
(g)}=1$. Now, for any natural numbers $n$ and $m$ denote by
$d(n,m,F)$ the maximal order of a root of unity in $F$ which
divides the greatest common divisor of $m$ and $n$. If $m$ is a
divisor of $n$, we denote $d(n,m,F)$ by $d(m,F)$. By the above,
for $G$ as in~\eqref{eq:decompofabeliangroupptok},
\begin{equation}\label{eq:NAforA}
\operatorname{Hom}(M(G),F^*)\cong \prod
_{i=1}^{r-1}C_{d(n_i,F)}^{r-i},
\end{equation}
generated by the tuple of functions
$$\left(\alpha _{ij}\right)_{1\leq i<j\leq r},$$
where $\alpha _{ij}(x_i,x_j)$ is a primitive $d(n_i,F)$-th root of
unity and $1$ elsewhere. From~\eqref{eq:UCT},~\eqref{eq:STforAB}
and~\eqref{eq:NAforA}, for $G$ as
in~\eqref{eq:decompofabeliangroupptok} we have
\begin{equation}\label{eq:cohomologyofABgroup}
H^2(G,F^*)\cong \left(\prod _{i=1}^r F^*/(F^*)^{n_i}\right) \times
\left(\prod _{i=1}^{r-1}C_{d(n_i,F)}^{r-i}\right).
\end{equation}
As a consequence of the above, over the complex numbers, non-isomorphic abelian groups of the same cardinality
admit non-isomorphic cohomology groups (see \cite{Schur} or \cite[Corollary 2.3.16]{KarpilovskyProjective}).

\subsection{Commutative components of twisted group rings}
In this section we study twisted group rings admitting a
commutative component in their Wedderburn decomposition. We start with a straightforward result.

\begin{lemma}\label{lemma:GRNOTGR}
Let $G$ be a group, $R$ a commutative ring and let $\alpha \in Z^2(G,R^*)$. If there exists an $\alpha$-projective representation of dimension $1$, then $\alpha$ is cohomologicaly trivial.
\end{lemma}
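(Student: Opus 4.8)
The plan is simply to unwind the definitions: a one-dimensional $\alpha$-representation exhibits $\alpha$ directly as a coboundary. First I would observe that an $\alpha$-projective representation of dimension one is a map $\eta : G \to GL(V)$ in which $V$ is a free $R$-module of rank one. For such $V$ the group $GL(V) = \operatorname{Aut}_R(V)$ is canonically isomorphic to the unit group $R^*$, so $\eta$ may be regarded as a function $\eta : G \to R^*$.

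Next, by the defining relation between a projective representation and its associated $2$-cocycle (see Subsection~\ref{sec:ProjRep}), we have
$$\alpha(g_1, g_2) = \eta(g_1)\,\eta(g_2)\,\eta(g_1 g_2)^{-1} \qquad \text{for all } g_1, g_2 \in G.$$
Read inside $R^*$, the right-hand side is exactly the value at $(g_1, g_2)$ of the coboundary of the $1$-cochain $\eta$, taken with respect to the trivial action of $G$ on $R^*$. Hence $\alpha = \delta\eta$ is a $2$-coboundary, so $[\alpha]$ is trivial in $H^2(G, R^*)$; that is, $\alpha$ is cohomologically trivial.

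I do not expect any genuine obstacle here, since the content of the statement is precisely the coincidence of the formula defining the cocycle of a one-dimensional representation with the formula for a $2$-coboundary. The only point needing a word of care is the interpretation of ``dimension one'' over a commutative ring rather than a field, namely that it should mean a rank-one free $R$-module so that $GL(V)$ collapses to $R^*$; granting this identification, the conclusion is immediate.
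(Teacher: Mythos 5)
Your proof is correct and is exactly the argument the paper intends: the paper's own proof is the one-line remark that the claim is ``clear by the definition of co-boundary,'' and your write-up simply makes explicit the identification $GL(V)\cong R^*$ for a rank-one free module and the observation that the defining cocycle formula $\alpha(g_1,g_2)=\eta(g_1)\eta(g_2)\eta(g_1g_2)^{-1}$ is precisely the coboundary of the $1$-cochain $\eta$.
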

\begin{proof}
This is clear by the definition of co-boundary.
\end{proof}

\begin{corollary}\label{cor:1dimtrivcoho}
Let $G$ be a group, let $R$ be a commutative ring and let
$\alpha \in Z^2(G,R^*)$. Then $R^{\alpha}G$ admits a
$1$-dimensional simple module if and only if $\alpha$ is
cohomologically trivial. In particular, $\sim_R$ is a refinement
of $\Delta_R$.
\end{corollary}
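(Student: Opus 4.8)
The plan is to prove Corollary~\ref{cor:1dimtrivcoho} by establishing both directions of the biconditional and then deriving the refinement statement. The forward direction is essentially immediate from Lemma~\ref{lemma:GRNOTGR}: a $1$-dimensional simple $R^{\alpha}G$-module corresponds to an $\alpha$-projective representation $\eta: G \rightarrow GL(V)$ with $\dim_R V = 1$, so $GL(V) \cong R^*$, and Lemma~\ref{lemma:GRNOTGR} then forces $\alpha$ to be cohomologically trivial. First I would invoke the correspondence noted in Section~\ref{sec:ProjRep} between $F^{\alpha}G$-modules and $\alpha$-representations (here stated for rings $R$) to pass from the module to the representation.

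For the converse, I would suppose $\alpha$ is cohomologically trivial, so that $[\alpha]$ is the identity in $H^2(G,R^*)$ and $R^{\alpha}G \cong RG$ as rings. Since the trivial cocycle always admits the trivial $1$-dimensional representation $g \mapsto 1$, the ordinary group ring $RG$ has a $1$-dimensional simple module (the trivial module). Transporting this module along the ring isomorphism $RG \cong R^{\alpha}G$ produces the desired $1$-dimensional simple $R^{\alpha}G$-module. Here I would use that a $2$-cocycle cohomologous to the trivial one yields a twisted group ring isomorphic to $RG$ via the standard rescaling $u_g \mapsto \rho(g)^{-1} u_g$ where $\alpha(g,h) = \rho(g)\rho(h)\rho(gh)^{-1}$, which the excerpt already records as the statement that the ring structure depends only on the cohomology class.

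Finally, to deduce that $\sim_R$ refines $\Delta_R$, I would take groups $G$ and $H$ with $G \sim_R H$, witnessed by an isomorphism $\psi: H^2(G,R^*) \rightarrow H^2(H,R^*)$ together with ring isomorphisms $R^{\alpha}G \cong R^{\psi(\alpha)}H$ for every class. Applying the hypothesis to the trivial class $[\alpha]$, the biconditional just proved shows $R^{\alpha}G$ has a $1$-dimensional simple module, hence so does $R^{\psi(\alpha)}H$, which forces $\psi(\alpha)$ to be cohomologically trivial as well; thus $\psi$ sends the identity of $H^2(G,R^*)$ to the identity of $H^2(H,R^*)$, and the corresponding ring isomorphism becomes $RG \cong RH$. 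This gives $G \Delta_R H$, as required.

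I expect no serious obstacle here, as the argument is structural rather than computational; the only point demanding a little care is ensuring that the module-representation correspondence and the isomorphism $R^{\alpha}G \cong RG$ for trivial $[\alpha]$ are valid over a general commutative ring $R$ rather than merely over a field, since the expository material in Section~\ref{sec:ProjRep} is phrased for fields. I would therefore state explicitly that the relevant facts---the correspondence between $1$-dimensional representations and $1$-dimensional simple modules, and the rescaling isomorphism for cohomologous cocycles---hold verbatim over any commutative ring, as they rely only on the defining multiplication rule of $R^{\alpha}G$ and not on semisimplicity.
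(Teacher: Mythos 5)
Your proof is correct and follows essentially the same route the paper intends: the forward direction is Lemma~\ref{lemma:GRNOTGR} via the module--representation correspondence, the converse uses the trivial representation of $RG \cong R^{\alpha}G$ for cohomologically trivial $\alpha$, and the refinement follows by applying this to the trivial class (indeed, since $\psi$ is a group isomorphism it already sends the trivial class to the trivial class, so that step is even more immediate than you make it). The only caveat --- that over a non-field $R$ the rank-one trivial module need not literally be simple --- is an imprecision inherited from the statement itself, not from your argument.
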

We wish to generalize this result to commutative components with
dimension not necessarily $1$ over fields.
\begin{proposition}\label{prop:abcomesfromEXT}
Let $G$ be a group, let $F$ be a field such that $\text{char}(F) \nmid |G|$ and let $[\alpha]\in H^2(G,F^*)$. Then $F^{\alpha}G$ admits a commutative simple component in its Wedderburn decomposition if and only if
$[\alpha]$ is in the image of the inflation map from $\operatorname{Ext}(G/G',F^*)$ to $H^2(G,F^*)$ as defined in Section~\ref{sec:ProjRep}.
\end{proposition}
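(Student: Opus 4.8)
The plan is to read everything off the Artin--Wedderburn decomposition
$F^{\alpha}G=\bigoplus_{[W]}\operatorname{End}_{D_W}(W)$, after the preliminary observation that a simple component $\operatorname{End}_{D_W}(W)\cong M_{\dim_{D_W}W}(D_W)$ is commutative if and only if it is a field, i.e. if and only if $\dim_{D_W}W=1$ and $D_W=\operatorname{End}_{F^{\alpha}G}(W)$ is a commutative, and hence finite, field extension $K$ of $F$. So "commutative simple component'' is literally "a field component $K/F$''. This is the generalization of Corollary~\ref{cor:1dimtrivcoho}, where the case $K=F$ corresponds to the trivial class; here the $\operatorname{Ext}$-part will absorb the larger component fields.

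For the implication assuming $[\alpha]\in\operatorname{Ext}(G/G',F^*)$, I would pick a \emph{symmetric} cocycle $\beta\in Z^2(G/G',F^*)$ with $[\alpha]=\operatorname{inf}([\beta])$ and use the inflated representative $\alpha(g,h)=\beta(\bar g,\bar h)$, where $\bar g$ is the image of $g$ in $G/G'$. Then $u_g\mapsto u_{\bar g}$ is a well-defined surjective $F$-algebra homomorphism $F^{\alpha}G\to F^{\beta}(G/G')$. Since $\beta$ is symmetric and $G/G'$ is abelian, the target is commutative, and since $\operatorname{char}(F)\nmid|G/G'|$ (as $|G/G'|$ divides $|G|$) the generalized Maschke theorem makes it semisimple, hence a nonzero product of fields. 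A semisimple quotient of the semisimple algebra $F^{\alpha}G$ has kernel generated by a central idempotent, so $F^{\beta}(G/G')$ is isomorphic to a direct sum of some of the simple components of $F^{\alpha}G$; each of its field components is therefore a commutative simple component of $F^{\alpha}G$.

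For the converse, suppose $F^{\alpha}G$ has a commutative simple component, a field $K$ finite over $F$, with projection $\pi:F^{\alpha}G\to K$. Every $u_g$ is a unit, so $\chi(g):=\pi(u_g)\in K^*$, and applying $\pi$ to $u_gu_h=\alpha(g,h)u_{gh}$ gives $\chi(g)\chi(h)=\alpha(g,h)\chi(gh)$ for all $g,h\in G$. Hence $\alpha=\delta\chi$ as a $K^*$-valued cocycle, and the image of $[\alpha]$ under the coefficient-change map $H^2(G,F^*)\to H^2(G,K^*)$ induced by the inclusion $F^*\hookrightarrow K^*$ is trivial. This is the crux, and it is exactly where I avoid confronting $M(G)$ directly: instead of trying to show that $\alpha$ is symmetric over $F$, I let the component field $K$ absorb the obstruction and then transport the vanishing back to $F$ by functoriality.

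To conclude I would use that the universal coefficient sequence \eqref{eq:UCT} is natural in the coefficient group. Comparing the sequences for $F^*$ and $K^*$ gives a commutative ladder in which the quotient maps $H^2(G,-)\to\operatorname{Hom}(M(G),-)$ are compatible with $F^*\hookrightarrow K^*$, and because $\operatorname{Hom}(M(G),-)$ is left exact the induced map $\operatorname{Hom}(M(G),F^*)\to\operatorname{Hom}(M(G),K^*)$ is injective. Since $[\alpha]$ dies in $H^2(G,K^*)$, its image in $\operatorname{Hom}(M(G),K^*)$ is trivial, so already its image in $\operatorname{Hom}(M(G),F^*)$ vanishes; by exactness of \eqref{eq:UCT} this places $[\alpha]$ in $\operatorname{Ext}(G/G',F^*)$, i.e. in the image of the inflation map. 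I expect the only delicate point to be this last step: the splitting of \eqref{eq:UCT} is not natural, so the argument must be run through the naturality of the sub- and quotient-maps together with the left-exactness of $\operatorname{Hom}(M(G),-)$, rather than through the non-canonical complement.
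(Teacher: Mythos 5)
Your proof is correct, but it takes a partly different route from the paper's, most notably in the forward direction. There the paper passes to the algebraic closure: since $\operatorname{Ext}(G/G',\bar F^*)$ vanishes, the image of $[\alpha]$ in $H^2(G,\bar F^*)$ is trivial, so $F^\alpha G\otimes_F\bar F\cong\bar F G$ contains $\bar F$ as a component, and one then descends to a commutative component of $F^\alpha G$. Your explicit surjection $F^\alpha G\to F^\beta(G/G')$ onto a commutative semisimple quotient is more elementary and self-contained: it avoids both the vanishing of $\operatorname{Ext}$ over $\bar F^*$ (divisibility of $\bar F^*$) and the descent step, i.e.\ the fact that a non-commutative simple component stays non-commutative after tensoring with $\bar F$, which the paper leaves implicit; the price is having to check that the inflated symmetric representative gives a well-defined algebra map, which you do. For the converse your argument is structurally the same as the paper's: the paper kills the image of $[\alpha]$ in $H^2(G,\bar F^*)$ via Corollary~\ref{cor:1dimtrivcoho} applied over $\bar F$ and then chases the commutative ladder of universal coefficient sequences \eqref{eq:UCT}, using exactly the injectivity of $\operatorname{Hom}(M(G),F^*)\to\operatorname{Hom}(M(G),\bar F^*)$ that you isolate via left-exactness; you merely replace $\bar F$ by the component field $K$ itself, which makes the coboundary relation $\alpha=\delta\chi$ explicit and keeps everything inside finite extensions. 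Both versions are sound; yours spells out the final step that the paper compresses into ``clearly from the diagram above.''
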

\begin{proof}
Denote by $\bar{F}$ the algebraic closure of $F$.
Consider the following commutative diagram related to the exact sequence in \eqref{eq:UCT}. Here the vertical maps are just obtained by understanding elements of $Z^2(G, F^*)$ as elements of $Z^2(G, \bar{F}^*)$.
\begin{center}
\begin{tikzpicture}
\tikzset{thick arc/.style={->, black, fill=none,  >=stealth,
text=black}} \tikzset{node distance=2cm, auto}
 \node (triv1F){$1$};
 \node (EXT_F) [right of=triv1F] {$\operatorname{Ext}(G/G',F^*)$};
  \tikzset{node distance=3cm, auto}
 \node (H2F) [right of=EXT_F] {$H^2(G,F^*)$};
  \node (HOM_F) [right of=H2F] {$\operatorname{Hom}(M(G),F^*)$};
   \tikzset{node distance=2cm, auto}
  \node (triv2F)[right of=HOM_F]{$1$};
 \tikzset{node distance=2cm, auto}
   \node (triv1Fclosed)[below of=triv1F] {$1$};
  \node (EXT_Fclosed) [right of=triv1Fclosed] {$\operatorname{Ext}(G/G',\bar{F}^*)$};
   \tikzset{node distance=3cm, auto}
 \node (H2Fclosed) [right of=EXT_Fclosed] {$H^2(G,\bar{F}^*)$};
  \node (HOM_Fclosed) [right of=H2Fclosed] {$\operatorname{Hom}(M(G),\bar{F}^*)$};
   \tikzset{node distance=2cm, auto}
  \node (triv2Fclosed)[right of=HOM_Fclosed]{$1$};

\draw[thick arc, draw=black] (triv1F) to node [above] {$$} (EXT_F);
\draw[thick arc, draw=black] (EXT_F) to node [above] {$\text{inf}$} (H2F);
\draw[thick arc, draw=black] (H2F) to node [above] {$d$} (HOM_F);
\draw[thick arc, draw=black] (HOM_F) to node [above] {$$} (triv2F);

\draw[thick arc, draw=black] (triv1Fclosed) to node [above] {$$} (EXT_Fclosed);
\draw[thick arc, draw=black] (EXT_Fclosed) to node [above] {$\text{inf}$} (H2Fclosed);
\draw[thick arc, draw=black] (H2Fclosed) to node [above] {$d$} (HOM_Fclosed);
\draw[thick arc, draw=black] (HOM_Fclosed) to node [above] {$$} (triv2Fclosed);

\draw[thick arc, draw=black] (EXT_F) to node [left] {$ $} (EXT_Fclosed);
\draw[thick arc, draw=black] (H2F) to node [left] {$ $} (H2Fclosed);
\draw[right hook->, draw=black] (HOM_F) to node [left] {$ $} (HOM_Fclosed);

\end{tikzpicture}
\end{center}
Assume first that $[\alpha]$ is in the image of the inflation map from $\operatorname{Ext}(G/G',F^*)$ to $H^2(G,F^*)$ and denote its (unique) pre image
in $\operatorname{Ext}(G/G',F^*)$ by $[\beta]$. Then, since $\operatorname{Ext}(G/G',\bar{F}^*)$ is trivial, $[\beta]$ is also trivial as an element of $\operatorname{Ext}(G/G',\bar{F}^*)$ and therefore
$\gamma :=\text{inf}( [\beta])$ is the trivial cohomology class in $H^2(G,\bar{F}^*)$. Hence $\bar{F}^{\gamma}G\cong \bar{F}G$
admits $\bar{F}$ as a simple component. Now, since $\bar{F}^{\gamma}G\cong F^{\alpha }G\otimes _F \bar{F}$ we conclude that $F^{\alpha }G$ admits a commutative simple component.

Conversely, assume that $F^{\alpha }G$ admits a commutative simple component. Let $[\gamma]$ be the cohomology class in $H^2(G, \bar{F}^*)$ obtained from $[\alpha]$. Then, $F^{\alpha }G \otimes _F \bar{F}\cong \bar{F}^{\gamma}G$
also admits a commutative simple component. However, since $\bar{F}$ is algebraically closed this component is $\bar{F}$ itself.
Consequently, by Corollary~\ref{cor:1dimtrivcoho} $[\gamma]$ is the trivial cohomology class.
Clearly from the diagram above $[\alpha]$ is in the image of the inflation map from $\operatorname{Ext}(G/G',F^*)$ to $H^2(G,F^*)$
\end{proof}

\section{Abelian groups}\label{abeliangroups}
In this section we will not assume that $H^2(G, F^*) = H^2(G, t(F^*))$, i.e. our results are valid for all fields.

The main results of this section is Theorem $2$ and Theorem~\ref{th:AbelianIso} . The proof of Theorem 2 is done
in three steps. In Theorem~\ref{th:Char0} we prove Theorem
2(1), Example~\ref{ex:C9} and Example~\ref{ex:OddOverReal} shows Theorem 2(2) and lastly,
Proposition~\ref{prop:notinrelation} gives Theorem 2(3). 

In a way, the group ring isomorphism problem asks whether it is
possible to distinguish groups by their group ring structure over
a commutative ring $R$. For this purpose it is clear that the ring
of integers is ``the best'' ring since for any commutative ring
$R$ and finite groups $G$ and $H$ the isomorphism
$\mathbb{Z}G\cong \mathbb{Z}H$ implies that $RG \cong RH$. Also,
in a sense, in the semi-simple case, the field of complex numbers
is ``the worst'' commutative domain in the sense that if $F$ is a
commutative domain, $G$ and $H$ are finite groups such that
$FG\cong FH$ is semi-simple then $\mathbb{C}G\cong \mathbb{C}H$.
This follows from the fact that if $\bar{F}$ denotes the algebraic
closure of the quotient field of $F$ then $\bar{F}G \cong \bar{F}
\otimes_F FG$ and the character theories over algebraically closed
fields coincide in the semi-simple case \cite[Corollary
18.11]{CR1}. We don't know yet, if $\mathbb{Z}$ is also ``best''
in distinguishing groups in the twisted case, but it is clear that
$\mathbb{C}$ is no longer the ``worst'' in the semi-simple case. We give two simple examples:

\begin{example}\label{ex:C9}
  Let $G \cong C_3\times C_3$, let $H \cong C_9$ and let $F = \mathbb{F}_{17}$.
  Then, $H^2(G,F^*)$ and $H^2(H,F^*)$ are trivial and
  $$ FG\cong FH\cong F\oplus 4\mathbb{F}_{17^2}.$$
  So $G\sim _F H$.

  As $G \not\cong H$ it is clear that $G \not \sim _{\mathbb{C}} H$, since these groups admit non-isomorphic Schur multipliers by \eqref{eq:cohomologyofABgroup} (see also
  \cite[Lemma 1.2]{MargolisSchnabel}).
\end{example}

\begin{example}\label{ex:OddOverReal}
Let $G$ and $H$ be abelian groups of odd order such that $|G| = |H|$ and denote by $\mathbb{R}$ the real numbers. As $\mathbb{R}^*/(\mathbb{R}^*)^p = 1$ for any odd prime $p$ we conclude that $\Ext(G, \mathbb{R}^*) = \Ext(H, \mathbb{R}^*) = 1$. Moreover, $F$ contains no primitive root of unity of order $p$, for any odd prime $p$. Hence also $\Hom(M(G), \mathbb{R}^*) = \Hom(M(H), \mathbb{R}^*) = 1$ and overall $H^2(G, \mathbb{R}^*) = H^2(H, \mathbb{R}^*) = 1$. 
	 Furthermore, for any non-trivial representation of $G$ or $H$ there is an element of odd order which does not lie in the kernel. Hence the module corresponding to such a representation of $G$ or $H$ is isomorphic to $\mathbb{C}$. So
	\[\mathbb{R} G \cong \mathbb{R} H \cong \mathbb{R} \oplus \frac{|G|-1}{2} \mathbb{C}. \]
Hence $G \sim_F H$.

Also here we know that $G \not\cong H$ and hence $G \not \sim _{\mathbb{C}} H$, as these groups admit non-isomorphic Schur multipliers by \eqref{eq:cohomologyofABgroup}.
\end{example}

Notice, that for abelian groups $G$ and $H$, if $\mathbb{C}G\cong
\mathbb{C}H$ and $M(G)\cong M(H)$ then $G$ and $H$ are isomorphic. In particular, if $G \sim_{\mathbb{C}} H$ then $G\cong H$.
By the above examples, this is not true in general over other fields. Next we will search for conditions on a field $F$ and abelian groups $G$ and $H$, such that under these conditions $G \sim_{F} H$ will imply $G\cong H$. The following lemma will be key.
\begin{lemma}\label{lemma:exp-m-isomorphic}
Let $G$ and $H$ be finite abelian $p$-groups for a prime $p$. Let $F$ be a field and let $p^m$ be the cardinality of the maximal
$p$-subgroup of $F^*$ (here $m$ being infinity is allowed). If $G \sim_F H$, then the maximal subgroups of $G$ and $H$ of
exponent dividing $p^m$ are isomorphic.
In particular, for $m\geq 1$ the groups $G$ and $H$ have the same rank.
\end{lemma}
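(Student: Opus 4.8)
The plan is to extract from $G \sim_F H$ enough cohomological invariants to reconstruct the relevant part of $G$, using that the relation refines $\Delta_F$ together with Proposition~\ref{prop:abcomesfromEXT}.

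First I would dispose of the degenerate case. If $m = 0$ the subgroups in question are trivial and there is nothing to prove, so assume $m \ge 1$. Then $F$ contains a primitive $p$-th root of unity, forcing $\operatorname{char}(F) \ne p$; as $G$ is a $p$-group this means $\operatorname{char}(F) \nmid |G|$, so every twisted group algebra of $G$ and of $H$ over $F$ is semisimple and Proposition~\ref{prop:abcomesfromEXT} applies. By Corollary~\ref{cor:1dimtrivcoho} the relation $\sim_F$ refines $\Delta_F$, so $FG \cong FH$ and in particular $|G| = |H|$. Let $\psi \colon H^2(G,F^*) \to H^2(H,F^*)$ be the isomorphism witnessing $G \sim_F H$. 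Since each ring isomorphism $F^\alpha G \cong F^{\psi(\alpha)}H$ preserves the property of possessing a commutative simple component, Proposition~\ref{prop:abcomesfromEXT} shows that $\psi$ maps the image of $\Ext(G,F^*)$ onto the image of $\Ext(H,F^*)$. Passing to quotients in the sequence~\eqref{eq:UCT} then yields an abstract isomorphism $\Hom(M(G),F^*) \cong \Hom(M(H),F^*)$.

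Next I would make this isomorphism explicit. Write $G \cong \prod_{i=1}^r C_{p^{a_i}}$ with $a_1 \le \dots \le a_r$ and $H \cong \prod_{i=1}^s C_{p^{b_i}}$ with $b_1 \le \dots \le b_s$, and set $c_i := \min(a_i,m)$ and $d_i := \min(b_i,m)$; the maximal subgroup of $G$ of exponent dividing $p^m$ is $\prod_{i=1}^r C_{p^{c_i}}$, and likewise for $H$. By~\eqref{eq:NAforA} we have $\Hom(M(G),F^*) \cong \prod_{i=1}^{r-1} C_{p^{c_i}}^{\,r-i}$ and the analogous expression for $H$. Because $m \ge 1$ every $c_i, d_i$ is at least $1$, so the rank of $\Hom(M(G),F^*)$ equals $\sum_{i=1}^{r-1}(r-i) = \binom{r}{2}$; comparing ranks forces $r = s$, which already gives the final assertion that $G$ and $H$ have equal rank. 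With $r = s$ fixed, I would recover the exponents by matching primary decompositions: since the multiplicities $r-1 > r-2 > \dots > 1$ attached to the nondecreasing values $c_i$ are distinct consecutive integers, equal-value blocks correspond to contiguous blocks of these multiplicities, and peeling them off from the smallest multiplicity upward reconstructs $(c_1,\dots,c_{r-1})$ uniquely from the abstract group. Hence $c_i = d_i$ for all $1 \le i \le r-1$.

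It remains to compare the top exponents $c_r$ and $d_r$, and this is the step I expect to be the real point, since $\Hom(M(G),F^*)$ is insensitive to $c_r$. Here I would bring in the order equality $\sum_i a_i = \sum_i b_i$. If $c_{r-1} = m$, then $m \ge c_r \ge c_{r-1} = m$ and similarly for $d_r$, so $c_r = d_r = m$. If instead $c_{r-1} < m$, then $a_{r-1} = c_{r-1} < m$, so every $a_i$ with $i < r$ lies below $m$ and equals $c_i = d_i = b_i$; the equality of orders then gives $a_r = b_r$ and hence $c_r = d_r$. In both cases $\prod_{i=1}^r C_{p^{c_i}} \cong \prod_{i=1}^r C_{p^{d_i}}$, which is exactly the claimed isomorphism of the maximal subgroups of exponent dividing $p^m$ (the edge case $r = 1$ being immediate from $|G| = |H|$). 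The main obstacle is precisely this last matching of the top invariant: all the cohomological data fail to see it, and it can only be retrieved by feeding in the dimension count $|G| = |H|$ coming from the untwisted group algebras.
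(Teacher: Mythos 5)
Your argument is correct, and its combinatorial core --- reading off the elementary divisors of $G$ truncated at $p^m$ from the abstract structure of $\Hom(M(G),F^*)$ via \eqref{eq:NAforA} --- is the same as the paper's, which packages your ``peeling'' step as the recursion $a_i = \binom{b_i(G)}{2} + b_i(G)\sum_{j>i}b_j(G)$. You deviate in three places, each defensibly. First, where the paper obtains $\Hom(M(G),F^*)\cong\Hom(M(H),F^*)$ by citing \cite{TGRIPIII}, you derive it internally: since Proposition~\ref{prop:abcomesfromEXT} characterizes the image of $\Ext$ by a ring-theoretic property (possession of a commutative Wedderburn component), the isomorphism $\psi$ witnessing $G\sim_F H$ must preserve that image and hence descends to the quotients in \eqref{eq:UCT}; this keeps the proof self-contained. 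Second, the paper treats $\operatorname{char}(F)=p$ (via the modular isomorphism problem) and $m=\infty$ (via the Schur multiplier argument of \cite{MargolisSchnabel}) as separate cases, whereas you note that $m\geq 1$ already forces $\operatorname{char}(F)\neq p$ and that $m=\infty$ fits the general scheme --- a genuine streamlining. Third, and most substantively, you isolate the recovery of the top invariant $c_r$, which $\Hom(M(G),F^*)$ cannot see, and retrieve it from $|G|=|H|$; the paper's recursion by itself determines $b_m$ only up to the ambiguity $\binom{0}{2}=\binom{1}{2}=0$ (compare $C_p\times C_p$ with $C_{p^2}\times C_p$ for $m=2$, which have identical $a_i$'s), so the order equality coming from $FG\cong FH$ is genuinely needed there even though the paper does not say so explicitly. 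The only loose thread in your write-up is that $\binom{r}{2}=\binom{s}{2}$ forces $r=s$ only after the small cases $r,s\leq 1$ are settled by $|G|=|H|$, which you do flag at the end.
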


\begin{proof}
Note first, that if the charateristic of $F$ equals $p$, then $FG \cong FH$ implies $G \cong H$, as the modular isomorphism problem has a positive solution for abelian groups \cite[Corollary 5]{Passmanp4}. So assume that the characteristic of $F$ is different from $p$.

First, the lemma is clear for $m=0$, that is if $F$ contains no
primitive $p$-th roots of unity. Second, if $F^*$ admits a
$p$-subgroup of infinite order, then $\Ext(G, F^*) = \Ext(H, F^*) = 1$ and hence
$$M(G)\cong H^2(G,F^*)\cong H^2(H,F^*)\cong M(H).$$
Consequently by \cite[Lemma 1.2]{MargolisSchnabel} $G$ and $H$ are
isomorphic. We are left with the case $m$ is some natural number.

Note that by \cite[Proposition 2.11]{TGRIPIII} we know that 
$$\Hom(M(G), F^*)\cong \Hom(M(H), F^*).$$
Assume
\begin{equation*}
G=\langle \sigma _1 \rangle \times \langle \sigma _2 \rangle
\times \ldots \times \langle \sigma _k \rangle,
\end{equation*}
\begin{equation*}
H=\langle \tau _1 \rangle \times \langle \tau _2 \rangle \times
\ldots \times \langle \tau _r \rangle.
\end{equation*}
Now define for $1\leq i \leq m$
\begin{equation*}
\begin{array}{cc}
   b_i(G)=
  \left\{
  \begin{array}{cc}
    |\{ j:\circ (\sigma _j )=p^i \}|, & i<m . \\
    |\{ j:\circ (\sigma _j )\geq p^m\}| , & i=m .
  \end{array}
  \right. &
\end{array}
  b_i(H)=
  \left\{
  \begin{array}{cc}
    |\{ j:\circ (\tau _j )=p^i\}| , & i<m . \\
    |\{ j:\circ (\tau _j )\geq p^m\}| , & i=m .
  \end{array}
  \right.
\end{equation*}
By~\eqref{eq:cohomologyofABgroup} we have
$$\Hom(M(G), F^*) \cong \Hom(M(H), F^*) \cong a_m C_{p^m}\times a_{m-1}C_{p^{m-1}}\times \ldots \times a_1 C_p$$
for some natural numbers $a_1$,...,$a_m$. Also
by~\eqref{eq:cohomologyofABgroup} we can express the $a_i$ in
terms of the $b_i$ such that $a_m$ only depends on $b_m$,
$a_{m-1}$ only depends on $b_m$ and $b_{m-1}$ etc. Namely:
$$\binom{b_i(G)}{2} + b_i(G)\left(\sum_{j = i+1}^m b_j(G)\right) = a_i = \binom{b_i(H)}{2} + b_i(H)\left(\sum_{j = i+1}^m b_j(H)\right).$$
This formula follows as, in the notation of \eqref{eq:cohomologyofABgroup}, the $b_i(G)$ cyclic groups contribute $\binom{b_i(G)}{2}$ copies of cyclic groups of order $C_{p^i}$ to $M(G)$, one for each choice of two such groups, and each cyclic group of order bigger than $p^i$ contributes $b_i(G)$ copies.

Consequently, $b_i(G)=b_i(H)$ for any $1\leq i \leq m$ and the result follows.
\end{proof}

We are now ready do prove
\begin{theorem}\label{th:AbelianIso}
	Let $G$ and $H$ be finite abelian groups and $e$ the exponent of $G$. Let $F$ be a field with the following property: The characteristic of $F$ does not divide the order of $G$ and if $p^n$ is a prime power dividing $e$ such that $F$ contains no primitive root of unity of order $p^n$ and $\zeta$ is a primitive root of unity of order $p^n$ in an extension of $F$, then $F(\zeta)$ contains no primitive $p^{n+1}$-th root of unity.
	
	Then $G \sim_F H$ implies $G \cong H$.
\end{theorem}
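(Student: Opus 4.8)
The strategy is to reconstruct $G$ one prime at a time, combining the low–order information carried by $\operatorname{Hom}(M(-),F^*)$ (via Lemma~\ref{lemma:exp-m-isomorphic}) with the high–order information carried by the Wedderburn decomposition of the group algebra, the arithmetic hypothesis on $F$ being precisely what makes the high–order cyclotomic components visible. First I would reduce to abelian $p$-groups. For abelian groups all relevant data factor over the primary decomposition $G=\prod_p G_p$: since $M(G)\cong\wedge^2 G$ and $G_p\otimes G_q=0$ for $p\neq q$ one has $M(G)\cong\prod_p M(G_p)$, the group $H^2(G,F^*)$ is the product of its $p$-primary parts $H^2(G_p,F^*)$ by \eqref{eq:cohomologyofABgroup}, and a $p$-primary class inflated from $G_p$ satisfies $F^{\alpha}G\cong F^{\alpha_p}G_p\otimes_F FG_{p'}$, where $G_{p'}$ is the prime-to-$p$ complement. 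Restricting the isomorphism $\psi$ witnessing $G\sim_F H$ to the (characteristic) $p$-primary parts, I obtain the two consequences I shall use: $\operatorname{Hom}(M(G_p),F^*)\cong\operatorname{Hom}(M(H_p),F^*)$, which is immediate from the corresponding isomorphism over $F$ established in the proof of Lemma~\ref{lemma:exp-m-isomorphic}, and $FG_p\cong FH_p$, which follows from the multiplicativity of $\sim_F$ over coprime direct factors (cf. \cite{MargolisSchnabel}).

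Now fix $p$ and let $p^m$ be the largest power of $p$ with $\zeta_{p^m}\in F$. Write $G_p\cong\prod_j\langle\sigma_j\rangle$ and set $c_l=\#\{j:\circ(\sigma_j)\ge p^l\}$, so that knowing all $c_l$ is the same as knowing $G_p$, and let $k$ be defined by $p^k=\operatorname{exp}(G_p)$. By Lemma~\ref{lemma:exp-m-isomorphic} the maximal subgroups of $G_p$ and $H_p$ of exponent dividing $p^m$ are isomorphic, which yields $c_1,\dots,c_m$ equal for $G_p$ and $H_p$; if $k\le m$ this already gives $G_p\cong H_p$. For $k>m$ I use that, by the hypothesis on $F$, for every $n$ with $m<n\le k$ the field $F(\zeta_{p^n})$ contains no primitive $p^{n+1}$-th root of unity, so the tower
\[ F=F(\zeta_{p^m})\subsetneq F(\zeta_{p^{m+1}})\subsetneq\cdots\subsetneq F(\zeta_{p^k}) \]
is strictly increasing; hence the fields $F(\zeta_{p^i})$ for $m\le i\le k$ have pairwise distinct degrees over $F$ and are pairwise non-isomorphic as $F$-algebras.

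Finally, since $\operatorname{char}(F)\nmid|G|$ the algebra $FG_p$ is semisimple and its Wedderburn components are the fields $F(\zeta_{p^i})$, the component $F(\zeta_{p^i})$ occurring with multiplicity $N_{p^i}(G_p)/[F(\zeta_{p^i}):F]$, where $N_{p^i}(G_p)$ is the number of characters of $G_p$ of order exactly $p^i$. Because the component fields for the indices $i>m$ are mutually distinguishable by their degrees, the isomorphism $FG_p\cong FH_p$ forces $N_{p^i}(G_p)=N_{p^i}(H_p)$ for each $i>m$; using $N_{p^i}=p^{\sum_{l\le i}c_l}-p^{\sum_{l\le i-1}c_l}$ together with the already known values $c_1,\dots,c_m$, I solve upward for $c_{m+1},\dots,c_k$ and conclude $G_p\cong H_p$. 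Carrying this out for every $p$ gives $G\cong H$. I expect the main obstacle to be the reduction step: justifying cleanly that the prime–coupling in the Wedderburn decomposition of $FG$ can be undone to yield $FG_p\cong FH_p$, since cyclotomic fields attached to different primes may coincide over a general $F$. The arithmetic hypothesis only controls the $p$-power towers, so this decoupling must be drawn from the finer structure of the relation $\sim_F$ rather than from $FG\cong FH$ alone; note also that neither invariant suffices by itself, as $\operatorname{Hom}(M(-),F^*)$ cannot separate orders beyond $p^m$ while $FG_p$ lumps all characters of order at most $p^m$ into the single component $F$.
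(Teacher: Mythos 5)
Your proposal is correct and rests on the same two pillars as the paper's proof: Lemma~\ref{lemma:exp-m-isomorphic} pins down the invariants of the part of exponent at most $p^m$, and the arithmetic hypothesis on $F$ forces the tower $F=F(\zeta_{p^m})\subsetneq F(\zeta_{p^{m+1}})\subsetneq\cdots\subsetneq F(\zeta_{p^k})$ to be strict, making the high cyclotomic components of the group algebra visible. Where you genuinely differ is in how the second pillar is used. The paper reads off only the \emph{maximal} field extension in the Wedderburn decomposition, concludes $\exp(G)=\exp(H)=p^e$, and then finishes by writing $FG\cong FC_{p^e}\otimes F(G/C_{p^e})$ and ``concluding by induction'' --- a step that tacitly requires cancelling the tensor factor and knowing that the quotients are again in relation, neither of which is spelled out. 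You instead recover the whole invariant system in one pass: since the fields $F(\zeta_{p^i})$, $i>m$, are pairwise non-isomorphic over $F$, their multiplicities in $FG_p\cong FH_p$ determine $N_{p^i}$, and solving $p^{A_i}-p^{A_{i-1}}$ upward from the known $c_1,\dots,c_m$ gives $c_{m+1},\dots,c_k$. This is more direct, avoids the induction, and your bookkeeping (the formula for $N_{p^i}$ in terms of the $c_l$, and the fact that the $c_l$ determine $G_p$) checks out. The one incomplete step is exactly the one you flag: passing from $G\sim_F H$ to $FG_p\cong FH_p$, which is delicate because cyclotomic fields attached to different primes may coincide over $F$. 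Be aware, however, that this gap is present to the same extent in the paper, which dismisses the reduction to $p$-groups with the word ``clearly''; your derivation of $\operatorname{Hom}(M(G_p),F^*)\cong\operatorname{Hom}(M(H_p),F^*)$ from the primary decomposition of the Schur multiplier is fine, and it is only the algebra isomorphism for the $p$-components that neither you nor the paper fully justifies.
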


\begin{proof}
	Clearly it is sufficient to prove the theorem for abelian
	$p$-groups for primes $p$. Set $e_G = \log_p(\text{exp}(G))$ and
	$e_H = \log_p(\text{exp}(H))$. Let $p^m$ be the cardinality of the
	maximal $p$-subgroup of $F^*$ (here $m$ being infinity is
	allowed). If $m\geq \max \{e_G, e_H\}$ the result follows from Lemma~\ref{lemma:exp-m-isomorphic}.
	
	Assume $m<e_G$, denote by $K$ the prime field of $F$ and by $\zeta_n$ a primitive $n$-th root of unity over $F$ for any positive integer $n$. Set
$$d = \left\{ \begin{array}{cc} \varphi(p^m),& \text{if} \ \ m \geq 1, \\  {[}F \cap K(\zeta_p) : K{]} ,& \text{else} \end{array}\right. $$
where $\varphi$ denotes Euler's totient function.
	Then in the Artin-Wedderburn
	decomposition of $FG$ the maximal field extension appearing is $F(\zeta_{p^{e_G}})$ which has 
	degree $\frac{\varphi(p^{e_G})}{d}$ over $F$ by our assumptions on $F$.  Since $FG\cong FH$,
	the degree of the maximal field extension in the Artin-Wederbrun
	decomposition of $FH$ is also
	$\frac{\varphi(p^{e_G})}{d} =
	\frac{\varphi(p^{e_H})}{d}$. Consequently, $e_G=e_H =:
	e$. Since $FG \cong FC_{p^{e}} \otimes F(G/C_{p^{e}})$ and  $FH
	\cong FC_{p^{e}} \otimes F(G/C_{p^{e}})$ we conclude by induction
	that $G \cong H$.
\end{proof}

For fields of characteristic 0 this can be reformulated more elegantly.  For a positive integer $n$ denote by $\zeta_n$ a fixed primitive $n$-th root of unity in $\C$.

\begin{theorem}\label{th:Char0}
	Let $G$ and $H$ be abelian groups and $e$ the exponent of $G$. Let $F$ be a field of characteristic $0$ such that if $p$ is an odd prime divisor of $e$, then $F$ contains $\zeta_p$ or $F \cap \Q(\zeta_{p^2}) \subseteq \Q(\zeta_p)$, and if $2$ is a divisor of $e$, then $F$ contains $\zeta_4$ or $F \cap \Q(\zeta_8) = \Q$. Then $G \sim_F H$ implies $G \cong H$.
\end{theorem}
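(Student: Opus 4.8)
The plan is to reduce Theorem~\ref{th:Char0} to Theorem~\ref{th:AbelianIso} by verifying that the hypotheses on $F$ in characteristic $0$ imply the (seemingly more technical) hypothesis of Theorem~\ref{th:AbelianIso}. Since Theorem~\ref{th:AbelianIso} already gives $G \sim_F H \Rightarrow G \cong H$ whenever the characteristic does not divide $|G|$ (automatic here, as $\operatorname{char}(F)=0$) and a certain root-of-unity condition holds, the entire content of the proof is the following translation statement: for each prime power $p^n$ dividing $e$ such that $F$ contains no primitive $p^n$-th root of unity, if $\zeta$ is a primitive $p^n$-th root of unity in an extension of $F$, then $F(\zeta)$ contains no primitive $p^{n+1}$-th root of unity. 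I would state this as the single thing to prove and then dispatch it prime by prime.

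\textbf{First}, I would observe that it suffices to check the condition at the \emph{smallest} relevant exponent. Concretely, if $F$ contains no primitive $p^n$-th root of unity for some $n$, then $F$ contains no primitive $p^{n'}$-th root of unity for any $n' \geq n$ either; so the critical case is the smallest $n$ with $\zeta_{p^n} \notin F$. For $p$ odd the relevant smallest case is governed by whether $\zeta_p \in F$, and for $p=2$ by whether $\zeta_4 \in F$ (reflecting the split into the two bullet points of the hypothesis). The key number-theoretic fact I would invoke is that over $\Q$ the extensions $\Q(\zeta_{p^{n+1}})/\Q(\zeta_{p^n})$ are cyclic of degree $p$ (degree $2$ in the exceptional $2$-adic layers), and that $F(\zeta_{p^n})$ contains $\zeta_{p^{n+1}}$ if and only if the compositum $F \cdot \Q(\zeta_{p^n})$ already ``sees'' the next layer, which can be tested by comparing $F \cap \Q(\zeta_{p^{n+1}})$ against $\Q(\zeta_{p^n})$.

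\textbf{Second}, for the odd-prime case I would argue as follows. Suppose $p$ is an odd prime dividing $e$ and $\zeta_p \notin F$. The hypothesis gives $F \cap \Q(\zeta_{p^2}) \subseteq \Q(\zeta_p)$. I want to conclude $\zeta_{p^2} \notin F(\zeta_p)$, and inductively that the tower never collapses. The point is that $F(\zeta_p) \supseteq \zeta_{p^2}$ would force $[F(\zeta_{p^2}):F(\zeta_p)]=1$, whereas a standard computation with the cyclic Galois group $\operatorname{Gal}(\Q(\zeta_{p^2})/\Q(\zeta_p)) \cong C_p$ and the intersection hypothesis $F \cap \Q(\zeta_{p^2}) \subseteq \Q(\zeta_p)$ shows, via the Galois-theoretic formula $[F(\zeta_{p^2}):F(\zeta_p)] = [\Q(\zeta_{p^2}):\Q(\zeta_p) \cdot (F \cap \Q(\zeta_{p^2}))]$, that this degree equals $p$, a contradiction. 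The $p=2$ case is handled the same way using $F \cap \Q(\zeta_8) = \Q$ together with the noncyclic structure $\operatorname{Gal}(\Q(\zeta_8)/\Q) \cong C_2 \times C_2$, which is precisely why the $2$-adic hypothesis is phrased in terms of $\zeta_4$ and $\zeta_8$ rather than $\zeta_2$ and $\zeta_4$.

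\textbf{The main obstacle} I anticipate is the bookkeeping in the $2$-adic case: because $\Q(\zeta_{2^k})/\Q$ is not cyclic (its Galois group is $C_2 \times C_{2^{k-2}}$), the ``one-layer-up'' argument that works cleanly for odd $p$ needs the correct intersection condition to rule out the anomalous subfields $\Q(i)$, $\Q(\sqrt{2})$, $\Q(\sqrt{-2})$ sitting inside $\Q(\zeta_8)$. The hypothesis $F \cap \Q(\zeta_8) = \Q$ is exactly tailored to kill all three, and I would verify that it yields $[F(\zeta_8):F(\zeta_4)] = 2$ by the same compositum degree formula, thereby feeding the required root-of-unity condition into Theorem~\ref{th:AbelianIso}. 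Once the translation of hypotheses is complete for every prime dividing $e$, the conclusion $G \cong H$ is immediate from Theorem~\ref{th:AbelianIso}, and no further representation-theoretic input is needed.
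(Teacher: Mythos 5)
Your overall strategy is the paper's: reduce to Theorem~\ref{th:AbelianIso} and verify its root-of-unity hypothesis by cyclotomic Galois theory, and your compositum-degree computation $[F(\zeta_{p^2}):F(\zeta_p)] = [\Q(\zeta_{p^2}):\Q(\zeta_p)\cdot(F\cap\Q(\zeta_{p^2}))]$ is a correct (and equivalent) rephrasing of the paper's argument for the branch where $F$ does \emph{not} contain $\zeta_p$ (resp.\ $\zeta_4$). However, there are two genuine gaps. First, the other branch of the hypothesis --- $F$ \emph{does} contain $\zeta_p$ (resp.\ $\zeta_4$) but not all higher $p$-power roots of unity dividing $e$ --- is never argued. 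There you have no intersection condition to feed into your degree formula, and something must still be proved: the point (which the paper supplies) is that $\zeta_p \in F$ forces $\Gal(F\cap\Q(\zeta_{p^n})/\Q)$ to contain a copy of $C_{p-1}$, so $\Gal(\Q(\zeta_{p^n})/F\cap\Q(\zeta_{p^n}))$ is a cyclic $p$-group, hence uniserial, and its unique chain of subgroups matches the tower $\Q(\zeta_{p^m}) \subset \Q(\zeta_{p^{m+1}}) \subset \cdots$; only then can one conclude that no layer $F(\zeta_{p^{n-1}}) \subseteq F(\zeta_{p^n})$ collapses. (For $p=2$ one uses that complex conjugation generates a complement to the cyclic part of $\Gal(\Q(\zeta_{2^n})/\Q)$.) Without this, half of the theorem's hypothesis is unused and unverified.

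Second, your opening claim that ``it suffices to check the condition at the smallest relevant exponent'' is asserted, not proved, and it is exactly where the remaining work lies: Theorem~\ref{th:AbelianIso} requires $\zeta_{p^{n+1}} \notin F(\zeta_{p^n})$ for \emph{every} $n$ with $p^{n} \mid e$ and $\zeta_{p^n}\notin F$, and knowing this for the smallest such $n$ does not formally imply it for larger $n$. Your ``inductively the tower never collapses'' needs the propagation $F \cap \Q(\zeta_{p^{n+1}}) \subseteq \Q(\zeta_{p^n})$ for all relevant $n$, which follows from the layer-one hypothesis only via structural facts about subfields of prime-power cyclotomic fields: for odd $p$, any subfield of $\Q(\zeta_{p^{n+1}})$ whose degree over $\Q$ is divisible by $p$ contains the unique degree-$p$ subfield, which lies in $\Q(\zeta_{p^2})$ but not in $\Q(\zeta_p)$; for $p=2$, any nontrivial subfield of $\Q(\zeta_{2^{n+1}})$ contains one of the three quadratic fields, all of which lie in $\Q(\zeta_8)$. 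These are precisely the steps the paper's proof makes explicit (phrased there as statements about $\Gal(F\cap\Q(\zeta_{p^n})/\Q)$ containing no element of order $p$, resp.\ $\Gal(F(\zeta_{2^{n-1}})\cap\Q(\zeta_{2^n})/\Q)$ containing no cyclic group of order $2^{n-1}$); once they are added, your argument closes.
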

\begin{proof}
	Let $p$ be a prime and $n$ a positive integer. Assume $F$ does not contain $\zeta_{p^{n-1}}$ and let $m$ be the maximal integer such that $F$ contains $\zeta_{p^m}$. We will show that Theorem~\ref{th:AbelianIso} is applicable under our assumptions.
	
	First assume $p$ is odd. The Galois group of $\Q(\zeta_{p^n})/\Q$ is isomorphic to $C_{p-1} \times C_{p^{n-1}}$. If $F$ contains $\zeta_p$, then $\Gal(F \cap \Q(\zeta_{p^n})/\Q)$ contains a subgroup isomorphic to $C_{p-1}$ and $A = \Gal(\Q(\zeta_{p^n})/F \cap \Q(\zeta_{p^n}))$ is a cyclic group of $p$-power order, hence uniserial. The extensions of $F$ by $\zeta_{p^{m+1}}$, $\zeta_{p^{m+2}}$,...,$\zeta_{p^n}$ correspond to the unique composition series of $A$. We conclude that $F(\zeta_{p^{n-1}})$ does not contain $\zeta_{p^n}$. On the other hand, if $F \cap \Q(\zeta_{p^2}) \subseteq \Q(\zeta_p)$, then $\Gal(F \cap \Q(\zeta_{p^n})/\Q)$ contains no cyclic group of order $p$ and $\Gal(F(\zeta_{p^{n-1}}) \cap \Q(\zeta_{p^n})/\Q)$ contains no cyclic group of order $p^{n-1}$. So also in this case $F(\zeta_{p^{n-1}})$ does not contain $\zeta_{p^n}$.  
	
	Now assume $p = 2$. Then $\Gal(\Q(\zeta_{2^n})/\Q) \cong C_{2^{n-1}} \times C_2$, where as a generator of the direct factor $C_2$ one can take the complex conjugation. Note that the last fact follows, as the complex conjugation is not the square of any automorphism since the congruence $x^2 \equiv -1 \bmod 2^n$ is not solvable for $n \geq 2$. So if $F$ contains $\zeta_4$, then $\Gal(\Q(\zeta_{2^n})/F \cap \Q(\zeta_{2^n}))$ is uniserial and $F(\zeta_{2^{n-1}})$ does not contain $\zeta_{2^n}$. On the other hand, if $F \cap \Q(\zeta_8) = \Q$, then $\Gal(F(\zeta_{2^{n-1}}) \cap \Q(\zeta_{2^n})/\Q)$ contains no cyclic group of order $2^{n-1}$. So $F(\zeta_{2^{n-1}})$ does not contain $\zeta_{2^n}$.
\end{proof}

\begin{corollary}
Let $G$ and $H$ be abelian groups. Assume $F$ is algebraically closed field or a cyclotomic field. Then $G \sim_F H$ implies $G \cong H$.	
\end{corollary}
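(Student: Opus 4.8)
The plan is to deduce the corollary from Theorem~\ref{th:Char0} and Theorem~\ref{th:AbelianIso} by verifying their hypotheses in each of the two cases; the only genuinely new input is needed for algebraically closed fields whose characteristic divides $|G|$.

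Suppose first that $F = \Q(\zeta_n)$ is a cyclotomic field, so $\text{char}(F) = 0$. I would check the hypotheses of Theorem~\ref{th:Char0} using the classical identity $\Q(\zeta_a) \cap \Q(\zeta_b) = \Q(\zeta_{\gcd(a,b)})$ for intersections of cyclotomic fields. For an odd prime $p \mid e$ one has $F \cap \Q(\zeta_{p^2}) = \Q(\zeta_{\gcd(n,p^2)})$, and a short case analysis on $\gcd(n,p^2) \in \{1, p, p^2\}$ shows that either $p^2 \mid n$, whence $\zeta_p \in F$, or else $F \cap \Q(\zeta_{p^2}) \subseteq \Q(\zeta_p)$; in all cases the odd-prime condition of Theorem~\ref{th:Char0} holds. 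The case $p = 2$ is analogous, using $F \cap \Q(\zeta_8) = \Q(\zeta_{\gcd(n,8)})$ and $\Q(\zeta_2) = \Q$: if $4 \mid n$ then $\zeta_4 \in F$, and otherwise $F \cap \Q(\zeta_8) = \Q$. Thus Theorem~\ref{th:Char0} applies and gives $G \cong H$.

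Now suppose $F$ is algebraically closed. If $\text{char}(F) = 0$ then $F$ contains all roots of unity, so the first alternative in each condition of Theorem~\ref{th:Char0} holds trivially and we again conclude $G \cong H$. If $\text{char}(F) = \ell > 0$, I would pass to the primary decompositions $G = G_\ell \times G_{\ell'}$ and $H = H_\ell \times H_{\ell'}$ and, assuming $\sim_F$ respects this decomposition (the reduction already used in the proof of Theorem~\ref{th:AbelianIso}), treat the two factors separately. For the prime-to-$\ell$ part we have $\text{char}(F) \nmid |G_{\ell'}|$, and $F$ contains every $p$-power root of unity for each prime $p \neq \ell$, so the quantified condition in Theorem~\ref{th:AbelianIso} is vacuously satisfied and the theorem yields $G_{\ell'} \cong H_{\ell'}$. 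For the $\ell$-part I use that $\sim_F$ refines $\Delta_F$ (Corollary~\ref{cor:1dimtrivcoho}), so $FG \cong FH$; writing $FG \cong (FG_\ell)^{|G_{\ell'}|}$ (since $FG_{\ell'}$ is split semisimple and $FG_\ell$ is a local $F$-algebra), uniqueness of the decomposition into indecomposable block factors forces $FG_\ell \cong FH_\ell$, and the positive solution of the modular isomorphism problem for abelian groups \cite{Passmanp4} gives $G_\ell \cong H_\ell$. Combining the two parts yields $G \cong H$.

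The main obstacle is precisely this positive-characteristic algebraically closed case: Theorem~\ref{th:AbelianIso} cannot be applied verbatim when $\ell \mid |G|$, so one must isolate the $\ell$-component of the group algebra and invoke the modular isomorphism problem, while the prime-to-$\ell$ component is handled by the vacuous-condition instance of Theorem~\ref{th:AbelianIso}. The step requiring the most care is the justification that $G \sim_F H$ respects the primary decomposition, i.e. that it induces $G_\ell \sim_F H_\ell$ and $G_{\ell'} \sim_F H_{\ell'}$; this rests on the compatible decompositions of $H^2(G,F^*)$ and of the twisted group algebras as tensor products over the primary components, exactly as in the reduction to $p$-groups underlying Theorem~\ref{th:AbelianIso}.
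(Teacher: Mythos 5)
Your proof is correct, and for the cyclotomic case it coincides with the paper's argument: both verify the hypotheses of Theorem~\ref{th:Char0} via $\Q(\zeta_a) \cap \Q(\zeta_b) = \Q(\zeta_{\gcd(a,b)})$ and the resulting case analysis on $\gcd(n,p^2)$ and $\gcd(n,8)$. The divergence is in the algebraically closed case. When $\operatorname{char}(F) \nmid |G|$ the paper does not go through Theorem~\ref{th:Char0} or Theorem~\ref{th:AbelianIso} at all but applies Lemma~\ref{lemma:exp-m-isomorphic} directly with $m = \infty$; this is an immaterial difference. Where you genuinely add something is the case $\operatorname{char}(F) = \ell$ with $\ell \mid |G|$: the paper dispatches this in one line by citing the modular isomorphism problem \cite{Passmanp4}, which as stated applies to abelian $\ell$-groups, whereas you supply the reduction that makes this citation legitimate for mixed abelian groups --- writing $FG \cong (FG_\ell)^{|G_{\ell'}|}$ with $FG_\ell$ local, using uniqueness of the block decomposition to extract $FG_\ell \cong FH_\ell$ and $|G_{\ell'}| = |H_{\ell'}|$, and then treating the $\ell$-part by \cite{Passmanp4} and the $\ell'$-part by the vacuous instance of Theorem~\ref{th:AbelianIso}. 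Your version is the more complete argument on this point. The caveat you flag, that $G \sim_F H$ should descend to the primary components, is exactly the reduction the paper itself takes for granted in the opening sentence of the proof of Theorem~\ref{th:AbelianIso}, so you are on the same footing as the authors there; in fact for the $\ell'$-part one can get away with less, since the proof of Lemma~\ref{lemma:exp-m-isomorphic} in the case $m=\infty$ only uses $FG_{\ell'} \cong FH_{\ell'}$ (which your block argument already yields) together with an isomorphism of Schur multipliers, which follows from $H^2(G,F^*) \cong H^2(H,F^*)$ because $\operatorname{Hom}(M(G_\ell),F^*)$ and the whole $\operatorname{Ext}$-part vanish over an algebraically closed field of characteristic $\ell$.
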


\begin{proof}
If the characteristic of $F$ divides the order of $G$ we are done by \cite[Corollary 5]{Passmanp4}, so assume the characteristic of $F$ does not divide $|G|$.
Let $p$ be a prime dividing $|G|$. If $F$ is algebraically closed, then $F$ contains a $p^m$-th root of unity for any positive integer $m$. So it follows from Lemma~\ref{lemma:exp-m-isomorphic} that $G$ and $H$ are isomorphic. So assume, $F = \Q(\zeta_n)$ for some positive integer $n$. If $p \mid n$, then $F$ contains $\zeta_p$ and if $p \nmid n$, then $F \cap \Q(\zeta_{p^2}) = \Q$. Here we use that $\Q(\zeta_n) \cap \Q(\zeta_\ell) = \Q(\zeta_{\text{gcd}(n,\ell)})$ for any positive integer $\ell$. The same argument shows that $F$ contains $\zeta_4$, if $4 \mid n$ and $F \cap \Q(\zeta_8) = \Q$, if $4 \nmid n$.  Hence the result follows from Theorem~\ref{th:Char0}. 
\end{proof}

It is clear for groups $G$ and $H$ and a field $F$ that $$G\cong H\Rightarrow G\sim_{F}H\Rightarrow FG\cong FH \text{ and } H^2(G,F^*)\cong H^2(H,F^*)$$
and we have shown that in general the converse implications are not true. We have also shown that even for abelian groups in general $G \sim_F H$ does not imply $G\cong H$. We next show that there are abelian groups $G$ and $H$ which can have isomorphic group algebras and isomorphic second cohomology groups over some field, but nevertheless do not satisfy $G \sim_F H$ over any field. 
\begin{proposition}\label{prop:notinrelation}
Let $G = C_8\times C_2$ and let $H = C_4\times C_4$. Then
\begin{enumerate}
\item There exist fields $F$ such that $FG\cong FH$ and
$H^2(G,F)\cong H^2(H,F)$. But
\item For any field $F$ the relation
$G \sim_{F} H$ does not hold.
\end{enumerate}
\end{proposition}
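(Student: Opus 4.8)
For item (1) I would exhibit finite fields. Take $F=\mathbb{F}_q$ with $q\equiv 3\pmod 4$, for instance $q=3$. Then $F^*$ is cyclic of order $q-1\equiv 2\pmod 4$, so $F^*/(F^*)^2\cong F^*/(F^*)^4\cong F^*/(F^*)^8\cong C_2$, and since $F$ contains no primitive fourth root of unity we have $d(2,F)=d(4,F)=2$. Writing $G\cong C_2\times C_8$ and $H\cong C_4\times C_4$ and substituting into \eqref{eq:cohomologyofABgroup} gives $H^2(G,F^*)\cong H^2(H,F^*)\cong C_2^3$. For the group algebras I would decompose $FG$ and $FH$ into Wedderburn components indexed by the Frobenius orbits on the irreducible characters, the orbit of a character of order $d$ having size equal to the multiplicative order of $q$ modulo $d$. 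For $q\equiv 3,7\pmod 8$ this order is $1$ for $d\in\{1,2\}$ and $2$ for $d\in\{4,8\}$; hence the $8$ characters of order $8$ and the $4$ characters of order $4$ of $G$ yield $6$ copies of $\mathbb{F}_{q^2}$, the $12$ characters of order $4$ of $H$ also yield $6$ copies of $\mathbb{F}_{q^2}$, and in both cases the remaining $4$ characters give $4$ copies of $\mathbb{F}_q$. Thus $FG\cong FH\cong 4\,\mathbb{F}_q\oplus 6\,\mathbb{F}_{q^2}$.

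For item (2), fix $F$ and suppose for contradiction that $G\sim_F H$ via an isomorphism $\psi$. Being a group isomorphism, $\psi$ fixes the trivial class, so $FG\cong FH$. If $\operatorname{char}(F)=2$ this already forces $G\cong H$ by the positive solution of the modular isomorphism problem for abelian groups \cite[Corollary 5]{Passmanp4}, a contradiction; so assume $\operatorname{char}(F)\neq 2$ and all algebras below are semisimple. The tool I would use is that for an abelian group $A$ and $[\alpha]\in H^2(A,F^*)$ the commutator form $\beta_\alpha(g,h)=u_gu_hu_g^{-1}u_h^{-1}$ is alternating with radical the set $R_\alpha$ of $\alpha$-regular elements; by Lemma~\ref{lemma:centeroftga} the centre of $F^\alpha A$ has $F$-basis $\{u_g:g\in R_\alpha\}$, so the number of Wedderburn components of $F^\alpha A$ equals the number of field components of this centre, and each component of $F^\alpha A$ has reduced degree $\sqrt{|A/R_\alpha|}$ (equivalently, $F^\alpha A$ is free of rank $|A/R_\alpha|$ over its centre). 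Both quantities are preserved by ring isomorphism, hence are transported by $\psi$; I will also use that, for abelian $A$, $\alpha$ is symmetric if and only if $F^\alpha A$ is commutative (if $\alpha(g,h)=\alpha(h,g)$ then $u_gu_h=u_hu_g$), which by Proposition~\ref{prop:abcomesfromEXT} is exactly the condition that $F^\alpha A$ has a commutative component.

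Now split according to whether $F$ contains a primitive fourth root of unity $\zeta_4$. If $\zeta_4\in F$, then the nontrivial value $\beta_\alpha(a,b)=\zeta_4$ (coming from a generator of $\Hom(M(H),F^*)\cong C_4$) defines a nondegenerate form on $H=\langle a\rangle\times\langle b\rangle$, so $R_\alpha=1$ and $F^\alpha H$ is central simple over $F$ of dimension $16$ and reduced degree $4$. For $G=\langle a\rangle\times\langle b\rangle\cong C_8\times C_2$, however, $\beta_\alpha(a,b)$ has order dividing $\gcd(8,2)=2$, so $|G/R_\alpha|\le 4$ and every component of every $F^\alpha G$ has reduced degree at most $2$. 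The maximal reduced degree occurring is therefore a $\sim_F$-invariant separating $G$ from $H$, a contradiction.

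The remaining case $\zeta_4\notin F$ is the crux, since there the maximal reduced degree is $2$ on both sides and, as item (1) shows, even $FG\cong FH$ and $H^2(G,F^*)\cong H^2(H,F^*)$ can hold. Here I would single out the class $[\beta]$ on $H$ with $\beta(a,b)=-1$ and trivial $\Ext$-part, so that $F^\beta H=\langle X,Y:X^4=Y^4=1,\ YX=-XY\rangle$ has centre $F[X^2,Y^2]/((X^2)^2-1,(Y^2)^2-1)\cong F^4$; thus $F^\beta H$ is noncommutative with exactly four Wedderburn components. I claim no $F^\alpha G$ is isomorphic to it. If $\alpha$ is symmetric then $F^\alpha G$ is commutative, unlike $F^\beta H$. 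If $\alpha$ is non-symmetric, then $\beta_\alpha(a,b)=-1$ forces $R_\alpha=\langle a^2\rangle\cong C_4$, so the centre is $F[s]/(s^4-c)$ with $s=u_a^2$ and $c=u_a^8\in F^*$; the four roots of $s^4-c$ differ by fourth roots of unity, so with $\zeta_4\notin F$ at most two lie in $F$ and $s^4-c$ has at most three irreducible factors, whence $F^\alpha G$ has at most three components. Since $\psi$ must send some class to $[\beta]$, giving $F^\alpha G\cong F^\beta H$, this is the desired contradiction. The main obstacle is exactly this last case: one must go beyond reduced degrees and exploit that the order-$8$ generator of $G$ forces a cyclic degree-$4$ centre $F[s]/(s^4-c)$ that cannot split into four factors without $\zeta_4$, whereas $C_4\times C_4$ yields a multiquadratic centre that always splits completely.
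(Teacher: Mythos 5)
Your proposal is correct and follows essentially the same route as the paper's proof: the same trichotomy (characteristic $2$ via the modular isomorphism problem, then $\zeta_4\in F$ versus $\zeta_4\notin F$), the same distinguished cohomology classes on $H$, and the same use of Lemma~\ref{lemma:centeroftga} to compute centres. The only divergence is the final invariant in the hardest case: the paper distinguishes the centres by noting that $Z(F^{\beta}H)\cong 4F$ contains no element of (multiplicative) order divisible by $4$ while $u_{g_1}^2$ is such a central element in every $F^{\alpha}G$, whereas you count Wedderburn components ($4$ for $F^{\beta}H$ versus at most $3$ for any noncommutative $F^{\alpha}G$, since $s^4-c$ has at most three irreducible factors when $\zeta_4\notin F$) --- both invariants are valid and the arguments are interchangeable.
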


\begin{proof}
Let $\mathbb{F}_q$ be a finite field such that $q-1$ is divisible by $2$
but not divisible by $4$, that is $\mathbb{F}_q$ contains roots of unity of
order $2$ but does not contain roots of unity of order $4$. In
this case it follows from \eqref{eq:cohomologyofABgroup} that
$$H^2(G,\mathbb{F}_q^*)\cong H^2(H,\mathbb{F}_q^*)\cong C_2 \times C_2 \times C_2.$$
If additionally $q^2-1$ is divisible by $8$, then
$$\mathbb{F}_q G\cong \mathbb{F}_q H\cong 4\mathbb{F}_q\oplus \mathbb{F}_{q^2}.$$
This concludes the first part of the proposition. We want to show that
for $F$ any field, $G\not \sim _{F} H$.
Let $\text{char}(F)=p$. If $p=2$ then, since the modular
isomorphism problem has a positive solution for abelian groups, $FG \not \cong
FH$ and therefore, $G\not \sim _{F} H$ \cite[Corollary 5]{Passmanp4}.

Consequently we may assume $p \neq 2$. Let
$$G = C_8\times C_2= \langle g_1 \rangle \times \langle g_2 \rangle, \quad H = C_4\times C_4=\langle h_1 \rangle \times \langle h_2 \rangle .$$
In the following arguments about the center of twisted group
algebras we use Lemma~\ref{lemma:centeroftga}. If there exists a
primitive 4-th root of unity $\zeta$ in $F$ then there exists a
twisted group algebra over $H$ with a 1-dimensional center,
determined by the relation $[u_{h_1}, u_{h_2}] = \zeta$. But all
twisted group rings over $G$ admit a center of dimension at least
$4$ spanned by $u_{g_1}^2$. We are left with the case $p \neq 2$ and
$F$ not containing a primitive 4-th root of unity. Consider $[\alpha]\in H^2(H,F^*)$
determined in $F^{\alpha}H$ by
$$[\alpha]:\quad [u_{h_1},u_{h_2}]=-1,\quad u_{h_1}^4=u_{h_2}^4=1.$$
Then the center of $F^{\alpha}H$ is isomorphic to $F(C_2\times
C_2)\cong 4F$ and in particular, $4$ is not a divisor of the order
of any central element of $F^{\alpha}H$. However, for any
$[\beta]\in H^2(G,F^*)$ the element $u_{g_1}^2$ is central in
$F^{\beta}G$ of order multiple of $4$. This completes the proof.
\end{proof}

It is interesting to compare the situation in Proposition~\ref{prop:notinrelation} to the following example.
\begin{example}
Let $G=C_{16}\times C_4$ and let $H=C_8 \times C_8$, then $G\sim
_{\mathbb{F}_{31}} H$.
\end{example}
\begin{proof}
Let $\mathbb{F}_{31}=F$ and $\mathbb{F}_{{31}^2}=K$. Assume
$$G = C_{16}\times C_4= \langle g_1 \rangle \times \langle g_2 \rangle, \quad H = C_8\times C_8=\langle h_1 \rangle \times \langle h_2 \rangle .$$
Since, $F^*$ admits an element of order $2$ but no elements of
order $4$, by~\eqref{eq:cohomologyofABgroup}
$$H^2(G,F^*)\cong H^2(H,F^*)\cong C_2\times C_2\times C_2.$$
In order to prove that $G\sim _{\mathbb{F}_{31}} H$ we will need
also to describe generators for the cohomology groups. For
$H^2(G,F^*)$ we have the generators
$$[\alpha _1]:\hspace{0.5cm}[u_{g_1},u_{g_2}]=-1,\hspace{0.5cm} u_{g_1}^{16}=1,\hspace{0.5cm} u_{g_2}^{4}=1,$$
$$[\alpha _2]:\hspace{0.5cm}[u_{g_1},u_{g_2}]=1,\hspace{0.5cm} u_{g_1}^{16}=-1,\hspace{0.5cm} u_{g_2}^{4}=1,$$
$$[\alpha _3]:\hspace{0.5cm}[u_{g_1},u_{g_2}]=1,\hspace{0.5cm} u_{g_1}^{16}=1,\hspace{0.5cm} u_{g_2}^{4}=-1.$$
For $H^2(H,F^*)$ we have the generators
$$[\beta _1]:\hspace{0.5cm}[u_{h_1},u_{h_2}]=-1,\hspace{0.5cm} u_{h_1}^{8}=1,\hspace{0.5cm} u_{g_2}^{8}=1,$$
$$[\beta _2]:\hspace{0.5cm}[u_{h_1},u_{h_2}]=1,\hspace{0.5cm} u_{h_1}^{8}=-1,\hspace{0.5cm} u_{h_2}^{8}=1,$$
$$[\beta _3]:\hspace{0.5cm}[u_{h_1},u_{h_2}]=1,\hspace{0.5cm} u_{h_1}^{8}=1,\hspace{0.5cm} u_{h_2}^{8}=-1.$$
We claim that the isomorphism from $\psi :H^2(G,F^*)\rightarrow
H^2(H,F^*)$ sending $[\alpha _i]$ to $[\beta _i]$ induces a ring
isomorphism $F^{\alpha}G\cong F^{\psi(\alpha)}H $ for any
$[\alpha] \in H^2(G,F^*)$.
 The group rings $FG$ and $FH$ are clearly isomorphic,
namely to $4F \oplus 30K$. Now, let $[\alpha]\in H^2(G,F^*)$ and
$[\beta]\in H^2(H,F^*)$ be non-trivial cohomology classes such
that $F^{\alpha}G$ and $F^{\beta}H$ are commutative. By
Lemma~\ref{lemma:GRNOTGR} the twisted group rings admit no
$1$-dimensional components (over $F$). And therefore, since $K ^*$
admits elements of order $32$ we conclude that
$$F^{\alpha}G\cong F^{\beta}H \cong \oplus _{i=1}^{32} K.$$

A well known result says that for any group $G$, the order of a cohomology
class $[\gamma]\in H^2(G,F^*)$ divides the dimension of each
$\gamma$-projective representation of $G$ \cite[Proposition 6.2.6]{KarpilovskyProjective}. Therefore, by
Proposition~\ref{prop:abcomesfromEXT} for any $[\alpha]\in
H^2(G,F^*)$ and $[\beta]\in H^2(H,F^*)$ such that $F^{\alpha}G$
and $F^{\beta}H$ are not commutative, they are isomorphic to a
direct sum of $2\times 2$-matrix rings over $F$ and $K$. Therefore
they are isomorphic if and only if their centers are isomorphic.

By Lemma~\ref{lemma:centeroftga} for any $[\alpha]\in H^2(G,F^*)$
and $[\beta]\in H^2(H,F^*)$ such that $F^{\alpha}G$ and
$F^{\beta}H$ are not commutative, the center of $F^{\alpha}G$ is
generated (as an algebra) by $u_{g_1}^2,u_{g_2}^2$ and similarly
the center of $F^{\beta}H$ is generated (as an algebra) by
$u_{h_1}^2,u_{h_2}^2$. Again, by Lemma~\ref{lemma:GRNOTGR}, if the
restriction of $\alpha$ (similarly $\beta$) to the subgroup
generated by $g_1,g_2$ (similarly $h_1,h_2$) is non-trivial then
$$Z(F^{\alpha}G)\cong Z(F^{\beta}H)\cong 8K.$$
This holds for the cohomology classes
$$[\alpha _1 \alpha _2],[\alpha_1 \alpha _3],[\alpha _1 \alpha _2 \alpha _3]\in H^2(G,F^*), \quad
[\beta _1 \beta _2],[\beta_1 \beta _3],[\beta _1 \beta _2
\beta_3]\in H^2(H,F^*).$$ Finally,
$$Z(F^{\alpha_1}G)\cong Z(F^{\beta_1}H)\cong 4F\oplus 2K.$$
This completes the proof.
\end{proof}

\section{The Yamazaki cover}\label{UnSchur}
Let $p$ be prime, let $F$ be a field and let $\zeta$ be a
primitive root of unity of order $p^k$ which is maximal in the
sense that there are no primitive roots of unity in $F$ of order
$p^{k+1}$. Then, by our assumption that $H^2(G,F^*)\cong
H^2(G,t(F^*))$, we may always assume that for a cyclic group
$C_{p^r}$ with generator $\sigma$, the group $H^2(C_{p^r},F^*)$ is
generated by a cohomology class which admits a $2$-cocycle which
is determined by $u_{\sigma}^{p^r}=\zeta$ (see e.g.
\cite[p.31]{YamazakiUnschur}). Notice that this does not
necessarily hold without our assumption on the field. For example
$H^2(C_2,\mathbb{Q}^*)$ is an infinite group.

Let $G$ be a finite group and let $F$ be an algebraically closed
field of characteristic $0$. Then there exists a group $G^*$ with
an abelian normal subgroup $A\cong H^2(G,F^*)$ such that
\begin{equation*}
1 \rightarrow A \rightarrow G^* \rightarrow G \rightarrow 1
\end{equation*}
is a stem-extension, i.e. $A \leq Z(G^*) \cap (G^*)'$.
This $G^*$ is called a representation group of $G$ or a Schur
cover of $G$. Clearly, $|G^*|=|G||H^2(G,F^*)|$. In general the isomorphism type of a Schur cover is not unique,
but each cover satisfies
\begin{equation}\label{eq:GRSCHUR}
FG^* \cong \oplus_{[\alpha] \in H^2(G,F^*)} F^\alpha G.
\end{equation}
See \cite[Chapter 3, \S 3]{KarpilovskyProjective} for the details.

Different variations and generalizations of representation groups have been studied, see e.g. \cite{LassueurThevenaz, Sambonet} for some of the most recent.

The following example demonstrates that over non-algebraically closed fields there is no Schur cover, and at the same time suggests how to
find an analog, in a sense as in \eqref{eq:GRSCHUR}, in the non-algebraically closed case.
\begin{example}
Let $G= C_2$ be generated by an element $g$ and let $F=\mathbb{F}_5$. We
can define a $2$-cocycle $\beta \in Z^2(G,F^*)$ by $u_g^2=\zeta $
where $\zeta$ is of order $4$. Notice that $u_g$ is an element of
order $8$ in $F^\beta G$. It is clear that $H^2(G,F^*)\cong C_2$ and
therefore, if $G$ admits a Schur cover it is of order $4$. However,
$FC_4\cong F(C_2\times C_2) \cong 4F$ and in particular it does not
contain elements of order $8$. Consequently,~\eqref{eq:GRSCHUR} is
not satisfied and there is no Schur cover for $G$ over $F$.
However, it is not hard to check that
\begin{equation*}
\mathbb{F}_5C_8 \cong 4\mathbb{F}_5\oplus 2\mathbb{F}_{25}=2\left(\mathbb{F}_5 C_2\oplus \mathbb{F}_5^\beta
C_2\right)=2 \left(\oplus_{[\alpha] \in H^2(C_2,\mathbb{F}_5^*)} \mathbb{F}_5^\alpha
C_2\right).
\end{equation*}
\end{example}

We wish to find a group $G^*$ which will play a similar role of
the Schur cover over non-algebraically closed fields in the sense
that any twisted group ring over $G$ will be a direct summand of
the group ring over $G^*$. Since the construction of this group is
based on a proof of Yamazaki \cite{YamazakiUnschur} we will give
here the existence theorem with a sketch of the part of the proof
which describes how to construct this object. Again, for a field $F$ we
will denote by $t(F^*)$ the torsion part of $F^*$.

\begin{theorem}\label{Yamazaki cover}\cite{YamazakiUnschur} (see also \cite[Theorem 3.3.2]{KarpilovskyProjective})
Let $G$ be a finite group and let $F$ be a field such that $H^2(G,F^*)=H^2(G,t(F^*))$. There exists a finite central extension
\begin{equation}\label{eq:Yamazaki1}
1 \rightarrow A \rightarrow G^* \rightarrow G \rightarrow 1,
\end{equation}
such that any projective representation of $G$ is projectively equivalent to a linear representation
of $G^*$.
\end{theorem}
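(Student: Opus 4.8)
The plan is to reduce the statement to a single cohomological condition on the central extension \eqref{eq:Yamazaki1}. I would construct $G^*$ so that the transgression map $\Tra\colon \Hom(A,F^*)\to H^2(G,F^*)$ from Definition~\ref{def:Tra} is \emph{surjective}, and then show that surjectivity of $\Tra$ forces every projective representation of $G$ to linearize over $G^*$. Throughout I use that, by the standing assumption $H^2(G,F^*)=H^2(G,t(F^*))$ together with \eqref{eq:STforAB} and \eqref{eq:NAforA}, the group $H^2(G,F^*)$ is \emph{finite}; this is exactly what makes a \emph{finite} extension possible.

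First I would treat the implication ``$\Tra$ surjective $\Rightarrow$ linearization''. Fix a set-theoretic section $\mu\colon G\to G^*$ of \eqref{eq:Yamazaki1} and let $f\in Z^2(G,A)$ be the associated factor set $f(x,y)=\mu(x)\mu(y)\mu(xy)^{-1}$, so that every $g^*\in G^*$ is written uniquely as $g^*=a\,\mu(g)$ with $a\in A$. Given a projective representation $\eta\colon G\to GL(V)$ with cocycle class $[\alpha]$, I choose $\chi\in\Hom(A,F^*)$ with $\Tra(\chi)=[\chi\circ f]=[\alpha]$. After multiplying $\eta$ by a suitable map $G\to F^*$ fixing $1$ (a projective equivalence) I may assume its cocycle equals $\chi\circ f$ on the nose. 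I then define $\rho\colon G^*\to GL(V)$ by $\rho(a\,\mu(g))=\chi(a)\,\eta(g)$. Using that $A$ is central, that $\mu(x)\mu(y)=f(x,y)\mu(xy)$, and that $\eta(x)\eta(y)=\chi(f(x,y))\,\eta(xy)$, a direct check shows $\rho$ is a genuine linear representation of $G^*$. Since $\rho(\mu(g))=\eta(g)$, the projective representation $\eta$ equals $\rho\circ\mu$, hence is projectively equivalent to the restriction along $\mu$ of the linear representation $\rho$ of $G^*$.

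It then remains to construct a finite central extension with surjective transgression, which I would do by splitting $H^2(G,F^*)$ along \eqref{eq:UCT}. For the quotient $\Hom(M(G),F^*)$ I take the ordinary representation group (Schur cover) realizing the Schur multiplier $M(G)$: by the Hochschild--Serre property of transgression, $\Tra$ restricted to $\Hom(M(G),F^*)$ is a section of the surjection $H^2(G,F^*)\to\Hom(M(G),F^*)$ in \eqref{eq:UCT}. For the subgroup $\Ext(G/G',F^*)\cong\prod_i F^*/(F^*)^{n_i}$ coming from a decomposition $G/G'=\prod_i\langle x_i\rangle$ with $|x_i|=n_i$, I use \eqref{eq:EXTdecomposition} to reduce to one cyclic factor and realize $F^*/(F^*)^{n_i}\cong t(F^*)/t(F^*)^{n_i}$ by the abelian extension $1\to C_{k_i}\to C_{n_ik_i}\to C_{n_i}\to 1$ inflated to $G$; computing its factor set (the ``carry'' cocycle) shows that $\Tra$ sends a character of $C_{k_i}$ to the class determined by $u_{x_i}^{n_i}=\omega$, where $\omega$ runs over the $k_i$-th roots of unity in $F$. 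Assembling $A=M(G)\times\prod_iC_{k_i}$ and the corresponding central extension $G^*$, the fact that \eqref{eq:UCT} splits yields that $\Tra\colon\Hom(A,F^*)\to H^2(G,F^*)$ is surjective.

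The main obstacle is the $\Ext$ part, and specifically the correct choice of the kernels $C_{k_i}$: the naive guess $k_i=|t(F^*)/t(F^*)^{n_i}|$ fails, because a \emph{generator} of $t(F^*)/t(F^*)^{n_i}$ may be a root of unity of order strictly larger than that group's cardinality (already $C_2$ over $\F_5$ needs a root of unity of order $4$, giving the cover $C_8$ rather than $C_4$). One must instead let $k_i$ be the order of a root of unity whose image generates $t(F^*)/t(F^*)^{n_i}$, recorded prime by prime. This is precisely where the hypothesis $H^2(G,F^*)=H^2(G,t(F^*))$ is indispensable: it guarantees these orders are finite and that the symmetric part of $H^2(G,F^*)$ is captured entirely by roots of unity, so that finite cyclic kernels suffice. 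Over $\Q$, where the hypothesis fails, $H^2(C_2,\Q^*)$ is infinite and no finite $G^*$ can work.
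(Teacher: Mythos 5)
Your argument is correct, and the two halves are sound: the verification that a surjective transgression $\Tra\colon\Hom(A,F^*)\to H^2(G,F^*)$ forces every projective representation to lift (via $\rho(a\,\mu(g))=\chi(a)\eta(g)$ after normalizing the cocycle to $\chi\circ f$) is the standard argument, and you have correctly isolated the one genuinely delicate point, namely that the kernel attached to a class $c$ must be the order of a \emph{cocycle representative} of $c$ rather than the order of $c$ itself (your $\F_5$, $C_2$ versus $C_8$ example is exactly the one the paper uses to motivate the whole section). Where you diverge from the paper is in how the extension is assembled. The paper does not split $H^2(G,F^*)$ along the sequence \eqref{eq:UCT} at all: it writes $H^2(G,F^*)=\langle c_1\rangle\times\cdots\times\langle c_m\rangle$ as an abstract product of cyclic groups, chooses in each class $c_i$ a cocycle $\alpha_i$ of finite order $d_i$ (finiteness being exactly what the hypothesis $H^2(G,F^*)=H^2(G,t(F^*))$ buys), sets $A_i=C_{d_i}$, and takes for $G^*$ any extension whose class $\beta_i\in H^2(G,A_i)$ maps to $c_i$ under the coefficient morphism $A_i\hookrightarrow F^*$; surjectivity of the transgression is then immediate. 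Your route instead treats the $\Hom(M(G),F^*)$ part with the classical Schur cover and the $\Ext(G/G',F^*)$ part with the carry-cocycle extensions $1\to C_{k_i}\to C_{n_ik_i}\to C_{n_i}\to 1$ inflated to $G$. Both work; the paper's version is more uniform and produces a potentially smaller kernel (the Schur cover carries all of $M(G)$ even when $\Hom(M(G),F^*)$ is a proper quotient), while yours makes the mechanism behind the hypothesis and the later recognition criterion of Theorem~\ref{th:YamazakiGT} more transparent, and it supplies in full the linearization step that the paper leaves to Yamazaki's original article.
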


\textbf{Construction of $G^*$.}
First, we need to describe the group $A$ in~\eqref{eq:Yamazaki1}. Since $H^2(G,F^*)$ is a finite abelian group we may write
$$H^2(G,F^*)=\langle c_1 \rangle \times \langle c_2 \rangle \times \ldots \times \langle c_m \rangle.$$
Construct a new group as follows. Choose in any cohomology class $c_i$ a cocycle $\alpha _i$ of order $d_i$, let
$A_i = C_{d_i}$ and let
$$A=A_1\times A_2\times \ldots \times A_m.$$
Now, the group $G^*$ will be determined by a cohomology class $\beta \in H^2(G,A)$.
This $\beta$ can be considered as
$$(\beta _1, \beta _2,  \ldots , \beta _m) \in H^2(G,A_1)\times H^2(G,A_2) \times \ldots \times H^2(G,A_m),$$
while the only restriction on $\beta _i$ is that $\tilde{\chi}_i(\beta _i)=c_i$ for the natural morphism
$\tilde{\chi _i}:H^2(G,A_i)\rightarrow H^2(G,F^*).$ \qed

\begin{definition}
We will call the group $G^*$ in Theorem~\ref{Yamazaki cover} a \textit{Yamazaki cover} and will denote a Yamazaki cover of a group $G$ over a field $F$ by $Y_F(G)$.

If there is no proper quotient of $G^*$ which is also a
Yamazaki cover of $G$ we call $G^*$ a \textit{minimal Yamazaki
cover}.
\end{definition}

The following remarks are in order.

\begin{remark}
With the notations above we have a surjective morphism $\psi :A\rightarrow H^2(G,F^*)$. In fact this is the well-known transgression map $\operatorname{Hom}(A, F^*) \rightarrow H^2(G, F^*)$, cf. Definition~\ref{def:Tra} or \cite[Theorem 3.2.9]{KarpilovskyProjective}.
\end{remark}

\begin{remark}
Notice that with the above notations, $A$ is not uniquely determined, and in fact even its cardinality is not uniquely determined, since
there could be in $c_i$ cocycles $\alpha$ and $\alpha'$ of distinct orders.
Furthermore, like in the situation with the classical Schur cover, for a fixed $A$ different choices of $\beta$ can lead to non-isomorphic Yamazaki covers.
\end{remark}

\begin{remark} The existence of $Y_F(G)$ depends on the condition that $H^2(G, F^*) = H^2(G, t(F^*))$.
This condition was also investigated by Yamazaki. He showed that
$H^2(G, F^*) = H^2(G, t(F^*))$ if and only if $F^* =
(F^*)^{\operatorname{exp}(G/G')}t(F^*)$ \cite{YamazakiUnschur}
(cf. also \cite[Corollary 3.3.4]{KarpilovskyProjective}). In
particular over every finite field, the real and the complex
numbers Yamazaki covers always exist.
\end{remark}

The following is immediate now from Theorem~\ref{Yamazaki cover} and the construction of the Yamazaki cover.

\begin{corollary}
Let $Y_F(G)$ be a Yamazaki cover of a group $G$
over a field $F$ which corresponds to~\eqref{eq:Yamazaki1}. Then
\begin{equation*}
FY_F(G)\cong \frac{|A|}{|H^2(G,F^*)|} \oplus_{[\alpha] \in
H^2(G,F^*)} F^\alpha G.
\end{equation*}
\end{corollary}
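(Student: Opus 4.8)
The plan is to imitate the classical computation of the group algebra of a Schur cover: decompose $FY_F(G)$ along the central subgroup $A$ from the extension~\eqref{eq:Yamazaki1}, identify each resulting block with a twisted group ring, and then count multiplicities using the transgression map. Write $G^* = Y_F(G)$ and fix a set-theoretic section $\mu\colon G \to G^*$ of the projection in~\eqref{eq:Yamazaki1}, so that $\mu(g)\mu(h) = \beta(g,h)\mu(gh)$ for the defining cocycle $\beta \in Z^2(G,A)$ and every element of $G^*$ is uniquely of the form $a\mu(g)$ with $a \in A$, $g \in G$.

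First I would decompose $FA$. Here the Yamazaki construction is used: $A = A_1 \times \cdots \times A_m$ with $A_i = C_{d_i}$, where $d_i$ is the order of the cocycle $\alpha_i$ chosen inside $c_i$. Since $\alpha_i$ takes values in $F^*$, the values of $\alpha_i$ generate a finite (hence cyclic) subgroup of $t(F^*)$ of order $d_i$, so $F$ contains a primitive $d_i$-th root of unity and $\text{char}(F) \nmid d_i$. Consequently $FA$ is split semisimple, $FA \cong \bigoplus_{\lambda \in \Hom(A,F^*)} Fe_\lambda$, a sum of $|A|$ copies of $F$ indexed by the characters $\lambda \in \Hom(A,F^*)$, with primitive idempotents $e_\lambda = \frac{1}{|A|}\sum_{a \in A}\lambda(a)^{-1}a$. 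Because $A$ is central, each $e_\lambda$ is a central idempotent of $FG^*$, yielding an algebra decomposition $FG^* = \bigoplus_{\lambda} e_\lambda FG^*$; note this step needs no semisimplicity of $FG^*$ itself, so the statement remains valid even when $\text{char}(F)$ divides $|G|$.

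Next I would identify each block. Using $e_\lambda a = \lambda(a) e_\lambda$, the $|G|$ elements $u_g := e_\lambda \mu(g)$ are $F$-linearly independent and span $e_\lambda FG^*$, so $\dim_F e_\lambda FG^* = |G|$ (consistently, the $|A|$ blocks exhaust $\dim_F FG^* = |A||G|$). They multiply by $u_g u_h = e_\lambda \mu(g)\mu(h) = \lambda(\beta(g,h))\, u_{gh}$, whence $e_\lambda FG^* \cong F^{\lambda_*\beta}G$ where $\lambda_*\beta = \lambda \circ \beta$. By Definition~\ref{def:Tra} the class $[\lambda_*\beta]$ is exactly the transgression $\psi(\lambda) \in H^2(G,F^*)$, so $FG^* \cong \bigoplus_{\lambda \in \Hom(A,F^*)} F^{\psi(\lambda)}G$.

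Finally I would count multiplicities. Since $F^*$ is abelian, $\psi\colon \Hom(A,F^*) \to H^2(G,F^*)$, $\lambda \mapsto [\lambda\circ\beta]$, is a homomorphism of finite abelian groups, and it is surjective by the construction of $G^*$ (its image contains every generator $c_i$). Hence all its fibers have the same cardinality $|\Hom(A,F^*)|/|H^2(G,F^*)| = |A|/|H^2(G,F^*)|$. Regrouping the blocks by cohomology class, and using that $F^{\alpha}G$ depends only on $[\alpha]$, gives the asserted isomorphism. I expect the only genuinely delicate point to be the complete splitting of $FA$ over the possibly non-algebraically-closed field $F$: this is precisely the payoff of choosing, in the Yamazaki construction, a representing cocycle $\alpha_i$ of order $d_i$, which guarantees that $F$ contains the needed $d_i$-th roots of unity; everything else is the standard central-character bookkeeping.
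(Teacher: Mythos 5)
Your argument is correct and is exactly the standard central-idempotent computation that the paper treats as immediate from the construction: decompose $FG^*$ via the primitive idempotents of $FA$ (which split because each $d_i$-th root of unity lies in $F$ by the choice of the cocycles $\alpha_i$), identify the $\lambda$-block with $F^{\lambda\circ\beta}G$, and count fibers of the surjective transgression $\operatorname{Hom}(A,F^*)\rightarrow H^2(G,F^*)$. This matches the reasoning behind \eqref{eq:GRSCHUR} and the construction following Theorem~\ref{Yamazaki cover}, so no further comparison is needed.
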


For given groups $G$ and $H$ there is a well-known group theoretical condition how to determine whether $H$ is a Schur cover of $G$, assuming we know the order of $H^2(G, \mathbb{C}^*)$ \cite[Theorem 3.3.7]{KarpilovskyProjective}. For minimal Yamazaki covers we can provide a similar criterion  which requires a few more things to check though. For an abelian group $A$ and a prime $p$ denote by $A_p$ the Sylow $p$-subgroup of $A$.

\begin{theorem}\label{th:YamazakiGT}
Let $1 \rightarrow Z \rightarrow H \rightarrow G \rightarrow 1$ be
a central extension of a finite group $G$ and $F$ a field such that $H^2(G,F^*)\cong H^2(G,t(F^*))$. Assume that
this extension satisfies the following:
\begin{itemize}
\item $Z \cap H' \cong \operatorname{Hom}(M(G), F^*)$.
\item $\operatorname{rk}(G/G') = \operatorname{rk}(H/H')$.
\item For each prime $p$ we have the following: If $F^*$ contains a maximal finite $p$-subgroup and the order of this group is $p^m$ then $(Z/Z\cap H')_p$ is a direct product of $\operatorname{rk}(\operatorname{Ext}((G/G')_p, F^*))$ cyclic $p$-groups of order $p^m$.
\item $H' \cap Z$ has a complement in $Z$, i.e. the short exact sequence $1 \rightarrow Z \cap H' \rightarrow Z \rightarrow Z/(Z \cap H') \rightarrow 1$ is split.
\end{itemize}
Then $H$ is a minimal Yamazaki cover of $G$ over $F$.
\end{theorem}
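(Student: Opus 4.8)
The plan is to show that a central extension $H$ satisfying the four listed conditions is, up to isomorphism, obtainable by the Yamazaki construction described after Theorem~\ref{Yamazaki cover}, and moreover that it is minimal. The strategy is to separate the two ``halves'' of the cohomology group $H^2(G,F^*)$ governed by the universal coefficient sequence \eqref{eq:UCT}, namely the transgression-image part $\operatorname{Hom}(M(G),F^*)$ (which ``lives inside'' the commutator subgroup) and the $\operatorname{Ext}(G/G',F^*)$ part (which comes from $G/G'$ and is detected on the abelianization). First I would recall from the construction that a Yamazaki cover is determined by a central subgroup $A$ together with a class $\beta \in H^2(G,A)$ whose transgression $\psi:\operatorname{Hom}(A,F^*)\to H^2(G,F^*)$ is surjective, and that $A$ decomposes as a product of cyclic groups $A_i = C_{d_i}$ with $d_i$ the chosen order of a cocycle in a generating class $c_i$. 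The goal is therefore to match the given $Z$ against such an $A$ and to realize the extension class of $H$ as an appropriate $\beta$.

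The key steps, in order, are as follows. \emph{Step 1:} Use the first condition $Z \cap H' \cong \operatorname{Hom}(M(G),F^*)$ together with the fact (from the discussion around \eqref{eq:UCT} and Definition~\ref{def:Tra}) that the transgression attached to the central extension sends $\operatorname{Hom}(Z,F^*)$ onto the subgroup of $H^2(G,F^*)$ coming from the stem part, to identify the ``multiplier part'' of the cover; here the condition $Z \cap H' \cong \operatorname{Hom}(M(G),F^*)$ is exactly the stem condition $Z\cap H' $ of a representation-group-type cover restricted to the non-algebraically-closed setting. \emph{Step 2:} Use the fourth condition, that $Z\cap H'$ splits off in $Z$, to write $Z = (Z\cap H') \times C$ with $C \cong Z/(Z\cap H')$; then the factor $Z \cap H'$ will account for the $\operatorname{Hom}(M(G),F^*)$ summand of $H^2(G,F^*)$ and the complement $C$ will account for the $\operatorname{Ext}(G/G',F^*)$ summand. \emph{Step 3:} Analyze $C$ prime-by-prime using the third condition: for each prime $p$ with maximal finite $p$-subgroup of $F^*$ of order $p^m$, the hypothesis says $(Z/Z\cap H')_p$ is a product of $\operatorname{rk}(\operatorname{Ext}((G/G')_p,F^*))$ cyclic groups each of order $p^m$, which is precisely the shape of the cyclic factors $A_i=C_{d_i}$ produced by the construction when the generators $c_i$ of the $\operatorname{Ext}$-part have cocycle-order $p^m$ (cf.\ \eqref{eq:cohoofcyclic}, where $\operatorname{Ext}(C_n,F^*)\cong F^*/(F^*)^n \cong t(F^*)/t(F^*)^n$ has the relevant cyclic structure). \emph{Step 4:} Use the second condition $\operatorname{rk}(G/G') = \operatorname{rk}(H/H')$ to guarantee that no extra generators of the abelianization are introduced, so that the extension is genuinely of Yamazaki type and the transgression $\psi:\operatorname{Hom}(Z,F^*)\to H^2(G,F^*)$ is onto with the correct kernel; combined with Steps 1--3 this exhibits an isomorphism $Z \cong A$ compatible with $\psi$, and realizes the class of $H$ as a valid $\beta$.

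\emph{Step 5 (minimality):} Finally I would argue minimality by showing that if a proper central quotient $\bar H = H/N$ (with $1\neq N \le Z$) were still a Yamazaki cover, then the surjectivity of the transgression would force $\operatorname{Hom}(Z/N, F^*)$ to still surject onto $H^2(G,F^*)$; but the cyclic factors of $Z$ were chosen of the \emph{minimal} order $p^m$ realizing each generator (they cannot be smaller since $F^*$ contains no $p^{m+1}$-th root of unity), and the multiplier part $Z\cap H'$ maps isomorphically, so any nontrivial $N$ would drop the order of some cyclic factor below $p^m$ and destroy surjectivity of $\psi$. The main obstacle I expect is Step 3 together with the compatibility bookkeeping in Step 4: one must check that the prime-by-prime cyclic decomposition dictated by the third condition matches the decomposition $A = \prod_i C_{d_i}$ coming from a chosen generating set of $H^2(G,F^*)$, and that the splitting from Step 2 can be chosen to respect both the transgression map and this decomposition simultaneously. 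Reconciling the independently-chosen invariant factor decompositions on the $Z$-side and on the $H^2(G,F^*)$-side — so that $\psi$ really induces an isomorphism onto the $\operatorname{Ext}$ and $\operatorname{Hom}(M(G),F^*)$ summands with the right cyclic orders — is the delicate linear-algebra-over-abelian-groups heart of the argument, and is where the four hypotheses must be seen to be jointly exactly what the construction requires.
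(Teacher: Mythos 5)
Your overall architecture matches the paper's proof: you split $Z=(Z\cap H')\times C$ using the fourth hypothesis, let $Z\cap H'$ account for the $\operatorname{Hom}(M(G),F^*)$ part of $H^2(G,F^*)$ via the transgression, let $C$ account for the $\operatorname{Ext}(G/G',F^*)$ part, work prime by prime, and derive minimality from the fact that each cyclic factor already has the minimal possible order $p^m$. This is exactly the paper's strategy (the paper additionally identifies the $\operatorname{Ext}$-part with the subgroup $H^2_0(G,F^*)$ of classes coming from extensions $1\to A\to E\to G\to 1$ with $A\cap E'=1$, which is how it sees that $\operatorname{Tra}|_C$ lands in the right summand).

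However, there is a genuine gap at the step you yourself flag as ``the delicate heart'': you never actually prove that $\operatorname{Tra}|_C$ is \emph{surjective} onto the $\operatorname{Ext}$-part. Matching the ``shape'' of $(Z/(Z\cap H'))_p$ against the invariant factors of $\operatorname{Ext}((G/G')_p,F^*)$ is not enough --- a homomorphism between two abstractly isomorphic finite abelian groups need not be onto, and nothing in your Steps 3--4 rules out the image being a proper subgroup, nor even that the images of the generators $a_1,\dots,a_r$ of $(Z/(Z\cap H'))_p$ are independent or nontrivial. The paper closes this in two moves that are absent from your proposal. First, the rank hypothesis $\operatorname{rk}(G/G')=\operatorname{rk}(H/H')$ is used to show that the abelian extension of $G/G'$ corresponding to each $a_i$ is non-split, hence $\operatorname{Tra}(a_i)$ is not a coboundary, which gives $\operatorname{rk}(\operatorname{Tra}(P))=\operatorname{rk}(H^2_0(G,(F^*)_p))$. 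Second --- and this is the key idea you are missing --- if $\operatorname{Tra}(P)$ were a \emph{proper} subgroup of full rank, then some $\operatorname{Tra}(a_i)$ would equal $b^p$ for a cocycle $b\in Z^2(G,(F^*)_p)$, and such a $b$ would have to take a value that is a primitive $p^{m+1}$-th root of unity, contradicting the maximality of $p^m$ among orders of $p$-power roots of unity in $F^*$. You invoke the ``no $p^{m+1}$-th root of unity'' fact only in your minimality step; the paper needs it already to establish surjectivity, and without it your Step 4 does not go through.
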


\begin{proof}
A diagram illustrating the steps of the proof can be found below in \eqref{eq:picture}.

Note that by assumption the exponent of $Z$ divides the exponent of $F^*$, so
$Z \cong \operatorname{Hom}(Z, F^*)$. We need to show that the
transgression map (see Definition~\ref{def:Tra}) $\Tra:
\operatorname{Hom}(Z, F^*) \rightarrow H^2(G, F^*)$ is surjective
and moreover that this is not the case for any central extension $1
\rightarrow Z/\tilde{Z} \rightarrow H/\tilde{Z} \rightarrow G
\rightarrow 1$ for $\tilde{Z}$ a proper subgroup of $Z$.

Let $Z = (Z \cap H') \times C$ for a subgroup $C$ of $Z$ and
identify $C$ and $Z/(Z \cap H')$. By our assumption that $Z \cap
H' \cong \operatorname{Hom}(M(G), F^*)$ and \cite[Lemma
11.5.1]{KarpilovskyVolII} it follows that the image of
$\operatorname{Tra}|_{Z \cap H'}$ is isomorphic to
$\operatorname{Hom}(M(G), F^*)$. Define $H^2_0(G, F^*)$ as in
\cite[Definition before Theorem 2.2.9]{KarpilovskyProjective} to
be the part of $H^2(G,F^*)$ which corresponds to all central
extensions $1 \rightarrow A \rightarrow E \rightarrow G
\rightarrow 1$ with the property that $A' \cap E = 1$. Then
\cite[Theorem 2.2.9]{KarpilovskyProjective} implies that $H^2_0(G,
F^*)$ is exactly the image of $\operatorname{Ext}(G/G', F^*)$
under the inflation  map. In particular the transgression map
$\Tra: C \rightarrow H^2(G, \mathbb{F}^*)$ related to the short
exact sequence
$$1 \rightarrow Z/(Z \cap H') \rightarrow H/(Z\cap H') \rightarrow G \rightarrow 1 $$
has an image lying in $H^2_0(G, F^*)$. It remains to show that this is indeed the whole image and that this is not the case for any group smaller than $H/(Z \cap H')$. It is enough to show this for a non-trivial Sylow $p$-subgroup $P$ of $C$ for some fixed prime $p$ with respect to the Sylow $p$-subgroup of $H^2_0(G, (F^*)_p)$ as it follows for each Sylow subgroup of $C$ in the same way.

It follows from our second and third assumptions that $\rk((H/H')_p) = \rk((G/G')_p)$.
Let $P = \langle a_1 \rangle \times \langle a_2 \rangle \times ... \times \langle a_r \rangle$ for some $a_1$,...,$a_r$. Then each $a_i$ has order $p^m$ by assumption and $r = \rk(\operatorname{Ext}((G/G')_p, (F^*)_p)) = \rk(H^2_0(G, (F^*)_p))$. Fix some $1 \leq i \leq r$. An abelian extension of $G/G'$ by $(F^*)_p$ corresponding to $a_i$ is not of the form $1 \rightarrow (F^*)_p \rightarrow (F^*)_p \times G/G' \rightarrow G/G' \rightarrow 1$, as $\rk((H/H')_p) = \rk((G/G')_p)$. So by \cite[Theorem 2.1.2 and Corollary 2.1.3]{KarpilovskyProjective} the coclass $\Tra(a_i)$ is not a coboundary for any $1 \leq i \leq r$. So $\rk(\Tra(P)) = \rk(H^2_0(G, (F^*)_p))$.

Assume that $\Tra(P)$ is a proper subgroup of $H^2_0(G, (F^*)_p)$. Then there is an $1\leq i \leq r$ and a cocycle $b \in Z^2(G, (F^*)_p)$ such that $b^p = \operatorname{Tra}(a_i)$. But then the $b$ must have a value which is a $p^{m+1}$-th primitive root of unity in $F^*$, contradicting our choice of $m$.

Lastly, the minimality of $H$, follows from the fact that $a_i$
corresponds to an element in $\operatorname{Ext}(G/G', (F_p)^*)$,
that is an abelian extension with kernel $C_{p^m}$ and hence $a_i$
must have order at least $p^m$.

\begin{center}
\begin{equation}\label{eq:picture}
\begin{tikzpicture}[baseline=(current  bounding  box.center)]
\tikzset{thick arc/.style={->, black, fill=none,  >=stealth,
text=black}} \tikzset{node distance=2cm, auto}
 \node (triv_UPLEFT){$1$};
 \node (Z_UP) [right of=triv_UPLEFT] {$Z$};
  \tikzset{node distance=3cm, auto}
 \node (H_UP) [right of=Z_UP] {$H$};
  \node (G_UP) [right of=H_UP] {$G$};
   \tikzset{node distance=2cm, auto}
  \node (triv_UPRIGHT)[right of=G_UP]{$1$};

 \tikzset{node distance=2cm, auto}
   \node (triv_MIDDLELEFT)[below of=triv_UPLEFT] {$1$};
  \node (Zmod_MIDDLE) [right of=triv_MIDDLELEFT] {$Z/(Z\cap H')$};
   \tikzset{node distance=3cm, auto}
 \node (Hmod_MIDDLE) [right of=Zmod_MIDDLE] {$H/(Z\cap H')$};
  \node (G_MIDDLE) [right of=Hmod_MIDDLE] {$G$};
   \tikzset{node distance=2cm, auto}
  \node (triv_MIDDLERIGHT)[right of=G_MIDDLE]{$1$};

 \tikzset{node distance=2cm, auto}
   \node (triv_DOWNLEFT)[below of=triv_MIDDLELEFT] {$1$};
  \node (Zmod_DOWN) [right of=triv_DOWNLEFT] {$Z/(Z\cap H')$};
   \tikzset{node distance=3cm, auto}
 \node (Hmod_DOWN) [right of=Zmod_DOWN] {$H/H'$};
  \node (Gmod_DOWN) [right of=Hmod_DOWN] {$G/G'$};
   \tikzset{node distance=2cm, auto}
  \node (triv_DOWNRIGHT)[right of=Gmod_DOWN]{$1$};

\draw[thick arc, draw=black] (triv_UPLEFT) to node [above] {$$} (Z_UP);
\draw[thick arc, draw=black] (Z_UP) to node [above] {$$} (H_UP);
\draw[thick arc, draw=black] (H_UP) to node [above] {$$} (G_UP);
\draw[thick arc, draw=black] (G_UP) to node [above] {$$} (triv_UPRIGHT);

\draw[thick arc, draw=black] (triv_MIDDLELEFT) to node [above] {$$} (Zmod_MIDDLE);
\draw[thick arc, draw=black] (Zmod_MIDDLE) to node [above] {$$} (Hmod_MIDDLE);
\draw[thick arc, draw=black] (Hmod_MIDDLE) to node [above] {$$} (G_MIDDLE);
\draw[thick arc, draw=black] (G_MIDDLE) to node [above] {$$} (triv_MIDDLERIGHT);

\draw[thick arc, draw=black] (triv_DOWNLEFT) to node [above] {$$} (Zmod_DOWN);
\draw[thick arc, draw=black] (Zmod_DOWN) to node [above] {$$} (Hmod_DOWN);
\draw[thick arc, draw=black] (Hmod_DOWN) to node [above] {$$} (Gmod_DOWN);
\draw[thick arc, draw=black] (Gmod_DOWN) to node [above] {$$} (triv_DOWNRIGHT);

\draw[thick arc, draw=black] (Z_UP) to node [left] {$ $} (Zmod_MIDDLE);
\draw[thick arc, draw=black] (Zmod_MIDDLE) to node [left] {$ $} (Zmod_DOWN);
\draw[thick arc, draw=black] (H_UP) to node [left] {$ $} (Hmod_MIDDLE);
\draw[thick arc, draw=black] (Hmod_MIDDLE) to node [left] {$ $} (Hmod_DOWN);
\draw[thick arc, draw=black] (G_UP) to node [left] {$ $} (G_MIDDLE);
\draw[thick arc, draw=black] (G_MIDDLE) to node [left] {$ $} (Gmod_DOWN);

\end{tikzpicture}
\end{equation}
\end{center}

\end{proof}

\begin{example} We provide an example for $Y_{\mathbb{F}_3}(D_8)$ where $D_8$ denotes a dihedral group of order $8$. We also give an example that the last condition in Theorem~\ref{th:YamazakiGT} is necessary. Let $G = D_8$ and $F = \mathbb{F}_3$.

We have $G/G' \cong C_2 \times C_2$, so $\operatorname{Ext}(G/G', F^*) \cong C_2 \times C_2$. Moreover $M(G) \cong C_2$ \cite[Proposition 4.6.4]{KarpilovskyProjective}. A minimal Yamazaki cover of $G$ is given by
\begin{align*}
Y(G) = (\langle a \rangle \times \langle b \rangle ) \rtimes \langle c \rangle: \ \ a^2 = 1, \ b^8 = 1, \ c^4 = 1, \ a^c = a, \ b^c = ab^3.
\end{align*}
Then $Z(Y(G)) = \langle a, b^4, c^2 \rangle$ is an elementary abelian group of order $8$. Moreover $Y(G)' = \langle ab^2 \rangle$ is a cyclic group of order $4$. Setting $Z = Z(Y(G))$ we observe that all conditions from Theorem~\ref{th:YamazakiGT} are satisfied. Using the package \texttt{Wedderga} \cite{Wedderga} of the computer algebra system \texttt{GAP} \cite{GAP} we obtain moreover
$$FY(G) \cong 4\mathbb{F}_3 \oplus 6\mathbb{F}_9 \oplus 8M_2(\mathbb{F}_3) \oplus 2M_2(\mathbb{F}_9).$$

We now exhibit an example that the last condition in Theorem~\ref{th:YamazakiGT} is necessary. Set
\begin{align*}
H = (\langle a \rangle \times \langle b \rangle ) \rtimes \langle c \rangle: \ \ a^4 = 1, \ b^4 = 1, \ c^4 = 1, \ a^c = a^{-1}, \ b^c = ab.
\end{align*}
Then we have that $Z(H) = \langle ab^2, c^2 \rangle \cong C_4 \times C_2$ and $H' = \langle a \rangle \cong C_4$. Set $Z = Z(H)$. Then $H/Z \cong G$, $Z \cap H' = \langle a^2 \rangle \cong C_2$, $\operatorname{rk}(Z/(Z \cap H')) = 2$, $Z/(Z \cap H') \cong C_2 \times C_2$ and $\operatorname{rk}(H/H') = 2$. So the extension $1 \rightarrow Z \rightarrow H \rightarrow G \rightarrow 1$ satisfies all the conditions of Theorem~\ref{th:YamazakiGT} except the last one. But $H$ is not a Yamazaki cover of $G$ as its group algebra over $F$ is not isomorphic with the group algebra of $Y(G)$ given above. Indeed,
$$FH \cong 4\mathbb{F}_3 \oplus 6\mathbb{F}_9 \oplus 4M_2(\mathbb{F}_3) \oplus 4M_2(\mathbb{F}_9),$$
which again can be calculated using \cite{Wedderga}.
\end{example}

\section{The Dade example}\label{Dade}
In 1971 E. Dade, answering a question of R. Brauer \cite[Problem
2*]{Brauer63}, provided a family of examples of non-isomorphic
finite groups $G$ and $H$ such that the group algebras of $G$ and
$H$ are isomorphic over any field $F$. We will show that for a
subclass of Dade's examples there are fields $F$ such that $G
\not\sim_F H$. Note that the groups of Dade are metabelian and
hence have non-isomorphic group rings over the integers, a result
due to Whitcomb already known at the time Dade solved Brauer's problem \cite{Whitcomb}.

We will first describe the groups given by Dade. Let $p$ and $q$
be primes such that $q \equiv 1 \mod p^2$ and let $w$ be an
integer such that $w \not \equiv 1 \mod q^2$, but $w^p \equiv 1
\mod q^2$. Let $Q_1$ and $Q_2$ be the following two non-abelian
groups of order $q^3$.
\begin{align*}
Q_1 &= (\langle \tau_1 \rangle \times \langle \sigma_1 \rangle) \rtimes \langle \rho_1 \rangle, \\
Q_2 &= \langle \sigma_2 \rangle \rtimes \langle \rho_2 \rangle, \\
\tau_1^q &= \sigma_1^q = \rho_1^q = \sigma_2^{q^2} = \rho_2^q = 1, \ \sigma_2^q =: \tau_2, \\
\tau_1^{\rho_1} &= \tau_1, \ \sigma_1^{\rho_1} = \tau_1\sigma_1, \ \sigma_2^{\rho_2} = \tau_2\sigma_2
\end{align*}
So $Q_1$ and $Q_2$ are just the two non-abelian groups of order $q^3$ such that $Q_1$ has exponent $q$ (aka the Heisenberg group).

Let $\langle \pi_1 \rangle \cong C_{p^2}$, $\langle \pi_2 \rangle \cong C_p$ and for $i,j \in \{1,2 \}$ let
$$\rho_i^{\pi_j} = \rho_i, \ \sigma_i^{\pi_j} = \sigma_j^w, \ \tau_i^{\pi_j} = \tau_i^w.$$
Define two groups by
\begin{align*}
G &= (Q_1 \rtimes \langle \pi_1 \rangle) \times (Q_2 \rtimes \langle \pi_2 \rangle), \\
H &= (Q_1 \rtimes \langle \pi_2 \rangle) \times (Q_2 \rtimes \langle \pi_1 \rangle).
\end{align*}
These are the groups constructed by Dade as a counterexample to Brauer's question.

Notice that $G=G_1\times G_2$ and $H=H_1\times H_2$ for
$$G_1=Q_1 \rtimes \langle \pi_1 \rangle,\quad G_2=Q_2 \rtimes \langle \pi_2 \rangle,\quad
 H_1=Q_1 \rtimes \langle \pi_2 \rangle \quad H_2=Q_2 \rtimes \langle \pi_1 \rangle.$$

 \subsection{The second cohomology groups of $G$ and $H$}
In order to calculate the Schur multipliers of $G$ and $H$ we will
use a result of Schur \cite{Schur} about the Schur multiplier of
direct products of groups (see also \cite[Corollary 2.3.14]{KarpilovskyProjective}). Define the tensor product of two
finite groups $A$ and $B$ by
\begin{equation*}
  A\otimes B=A/A' \otimes _{\mathbb{Z}} B/B'.
\end{equation*}

\begin{theorem}\label{th:directproductSchur}
 Let $A$ and $B$ be finite groups. Then
\begin{equation*}
  M(A \times B)=M(A)\times M(B)\times (A \otimes B).
\end{equation*}
\end{theorem}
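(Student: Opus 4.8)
The plan is to translate the statement, which is phrased in terms of the Schur multiplier $M(G) = H^2(G,\C^*)$, into a statement about integral homology, where the behaviour under direct products is governed by the Künneth formula. First I would record the standard identification $M(G) \cong H_2(G,\Z)$ for a finite group $G$. By the universal coefficient theorem in cohomology there is a split exact sequence
$$1 \rightarrow \operatorname{Ext}(H_1(G,\Z),\C^*) \rightarrow H^2(G,\C^*) \rightarrow \operatorname{Hom}(H_2(G,\Z),\C^*) \rightarrow 1,$$
and since $\C^*$ is divisible, hence an injective $\Z$-module, the $\operatorname{Ext}$-term vanishes. Thus $M(G) \cong \operatorname{Hom}(H_2(G,\Z),\C^*)$, and because $H_2(G,\Z)$ is a finite abelian group this is (non-canonically) isomorphic to $H_2(G,\Z)$ itself. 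The same remark identifies $H_1(G,\Z) = G/G'$, so that $\Hom(-,\C^*)$ carries each homology group of a finite group to a group of the same isomorphism type and turns direct sums into products.

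Next I would apply the Künneth formula for the homology of a direct product of groups (equivalently, of the product of their classifying spaces):
$$H_n(A \times B,\Z) \cong \bigoplus_{i+j=n} H_i(A,\Z)\otimes_{\Z} H_j(B,\Z) \ \oplus \bigoplus_{i+j=n-1}\operatorname{Tor}(H_i(A,\Z),H_j(B,\Z)).$$
Specializing to $n=2$, the tensor term contributes $H_2(A)\otimes H_0(B)$, $H_1(A)\otimes H_1(B)$ and $H_0(A)\otimes H_2(B)$. Since $H_0$ of any group is $\Z$, the two outer summands collapse to $H_2(A)$ and $H_2(B)$, while the middle one is $A/A'\otimes_{\Z} B/B' = A\otimes B$. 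The $\operatorname{Tor}$ term for $n=2$ involves the pairs with $i+j=1$, namely $\operatorname{Tor}(H_0(A),H_1(B))$ and $\operatorname{Tor}(H_1(A),H_0(B))$, both of which vanish because $H_0 \cong \Z$ is free. Hence $H_2(A\times B,\Z) \cong H_2(A,\Z)\oplus H_2(B,\Z)\oplus (A\otimes B)$.

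Applying $\Hom(-,\C^*)$ to this decomposition and using the identifications of the first paragraph for $A$, $B$ and $A\times B$ yields $M(A\times B)\cong M(A)\times M(B)\times (A\otimes B)$, which is the claim. The only genuine obstacle is the clean bookkeeping of coefficients: one must justify that passing from $\C^*$-cohomology to $\Z$-homology is legitimate (vanishing of the relevant $\operatorname{Ext}$ by divisibility of $\C^*$) and that the $\operatorname{Tor}$ contributions in the Künneth formula drop out (by freeness of $H_0$); everything else is a direct reading of the Künneth terms. As a self-contained alternative I would instead exploit the Lyndon--Hochschild--Serre five-term exact sequences for the two projections $A\times B\to A$ and $A\times B\to B$, which split, to split off $M(A)\times M(B)$ as a direct factor and identify the complementary $A\otimes B$ via the induced commutator pairing $A/A'\times B/B'\to M(A\times B)$; the Künneth route is shorter, so I would present that one.
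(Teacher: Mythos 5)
Your argument is correct in all its steps: the universal coefficient sequence together with the divisibility (hence injectivity) of $\C^*$ gives $M(G)\cong \Hom(H_2(G,\Z),\C^*)\cong H_2(G,\Z)$ for finite $G$, the K\"unneth formula in degree $2$ has vanishing $\operatorname{Tor}$-terms because $H_0\cong \Z$ is free, and the surviving cross term $H_1(A)\otimes_{\Z}H_1(B)=A/A'\otimes_{\Z}B/B'$ is exactly the paper's $A\otimes B$; applying $\Hom(-,\C^*)$ and the finite-abelian-group duality then yields the claim, the equality in the statement being understood as a non-canonical isomorphism. Be aware, though, that there is no in-paper proof to compare against: the theorem is quoted as a classical result of Schur with a pointer to \cite[Corollary 2.3.14]{KarpilovskyProjective}, and the paper only uses the resulting isomorphism type (together with Lemma~\ref{lemma:semicoprimeSchur}) to compute the multipliers of Dade's groups. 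Your homological route is the standard modern proof of Schur's formula and is perfectly adequate here; the cocycle-level proofs found in Karpilovsky's book additionally produce explicit generating cocycles for the $A\otimes B$ factor (the commutation cocycles $[u_a,u_b]$), which is the form in which the paper actually exploits the decomposition later (e.g.\ in the discussion surrounding \eqref{eq:NAforA} and in the analysis of $M(Q_1)$), so if you wanted your proof to feed directly into those computations you would want to trace through the K\"unneth splitting to recover that explicit description of the third factor.
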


Notice that (slightly abusing notation)
$$G_1'=\langle \tau_1 \rangle \times \langle \sigma_1 \rangle=H_1' \text{ and }
G_2'= \langle \sigma_2 \rangle=H_2'.$$
Therefore
\begin{equation*}
G_1/G_1'\cong C_q\times C_p\cong H_2/H_2' \text{ and } G_2/G_2'\cong C_q\times C_{p^2}\cong H_1/H_1'.
\end{equation*}
Consequently
\begin{equation*}
G_1\otimes G_2\cong H_1\otimes H_2\cong C_q\times C_p.
\end{equation*}
We will use Theorem~\ref{th:directproductSchur} to compute the Schur multipliers of $G_1$, $G_2$, $H_1$ and $H_2$.
Notice that $G_1$, $G_2$, $H_1$ and $H_2$ are written as semi-direct products of subgroups of coprime order. The following lemma will be of use.

\begin{lemma}(see \cite[Corollary 2.2.6]{KarpilovskySchur})\label{lemma:semicoprimeSchur}
  Let $N$ and $T$ be subgroups of a group $G$ of co-prime order and assume $G = N \rtimes T$. Then
  \begin{equation*}
    M(G)=M(T)\times M(N)^T.
  \end{equation*}
Here $M(N)^T$ are the elements in $M(N)$ which are invariants under the $T$-action.
\end{lemma}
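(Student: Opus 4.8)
The plan is to deduce the formula from the Lyndon--Hochschild--Serre spectral sequence of the extension $1 \to N \to G \to T \to 1$, using coprimality of $|N|$ and $|T|$ at every step. Recall that the Schur multiplier is $M(G) = H^2(G, \C^*)$ with $\C^*$ a trivial $G$-module, and consider the cohomological spectral sequence
$$E_2^{p,q} = H^p\big(T, H^q(N, \C^*)\big) \ \Longrightarrow \ H^{p+q}(G, \C^*),$$
where $T = G/N$ acts on $H^q(N, \C^*)$ via the conjugation action induced from $G$. I would work in cohomology rather than homology precisely because the fibre term then appears as the module of $T$\emph{-invariants} $H^q(N,\C^*)^T$, which is exactly what the statement requires.

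The first step is to annihilate the off-axis terms. For $q \ge 1$ the group $H^q(N, \C^*)$ is finite and annihilated by $|N|$, so for $p \ge 1$ the term $E_2^{p,q}$ is annihilated both by $|N|$ (being the cohomology of a module killed by $|N|$) and by $|T|$ (positive-degree cohomology of $T$); since $\gcd(|N|,|T|)=1$ it vanishes. Hence on the total-degree-$2$ antidiagonal only $E_2^{2,0} = H^2(T, \C^*) = M(T)$ and $E_2^{0,2} = H^2(N, \C^*)^T = M(N)^T$ survive, with $E_2^{1,1} = 0$. The same coprimality principle kills every differential touching these two corners, since each such map is a homomorphism between a group annihilated by $|T|$ and one annihilated by $|N|$ (for instance $d_2 \colon E_2^{0,1} \to E_2^{2,0}$ and $d_3 \colon E_3^{0,2} \to E_3^{3,0}$). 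Therefore $E_\infty^{2,0} = M(T)$ and $E_\infty^{0,2} = M(N)^T$, and the filtration of $H^2(G,\C^*)$ produces the short exact sequence
$$0 \to M(T) \xrightarrow{\ \mathrm{inf}\ } M(G) \xrightarrow{\ \mathrm{res}\ } M(N)^T \to 0,$$
whose edge maps are inflation along $G \to T$ and restriction to $N$.

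It remains to split this sequence, and here the semidirect-product hypothesis is what I would use: the inclusion $\iota \colon T \hookrightarrow G$ is a section of the quotient map $\pi \colon G \to T$, so $\iota^* \circ \mathrm{inf} = \iota^* \circ \pi^* = (\pi\iota)^* = \mathrm{id}_{M(T)}$. Thus inflation is split injective, the sequence splits, and $M(G) \cong M(T) \times M(N)^T$. The step demanding the most care is the spectral-sequence bookkeeping, namely verifying that all relevant differentials genuinely have source and target annihilated by coprime integers, and confirming that the surviving fibre term is the invariants and not the coinvariants. That last point is in fact reassuring rather than obstructive: because $|T|$ is invertible on the $|N|$-torsion group $M(N)$, averaging over $T$ identifies $M(N)^T \cong M(N)_T$, so a homological computation would yield precisely the same answer.
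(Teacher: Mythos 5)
Your argument is correct. Note first that the paper does not prove this lemma at all: it is quoted verbatim from Karpilovsky's book on the Schur multiplier, so there is no in-paper proof to compare against, and a self-contained derivation like yours is a legitimate substitute. Your Lyndon--Hochschild--Serre computation is sound: for $q\ge 1$ the groups $H^q(N,\C^*)$ are killed by $|N|$, so all $E_2^{p,q}$ with $p,q\ge 1$ vanish by coprimality, the differentials $d_2\colon E_2^{0,1}\to E_2^{2,0}$ and $d_3\colon E_3^{0,2}\to E_3^{3,0}$ are maps between groups of coprime exponent and hence zero, and the five-term-type exact sequence $0\to M(T)\to M(G)\to M(N)^T\to 0$ follows, split by restricting along the inclusion $T\hookrightarrow G$ furnished by the semidirect product. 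The standard textbook route (as in Karpilovsky) tends to avoid the full spectral sequence, instead combining the inflation--restriction exact sequence with a transfer/corestriction argument: the composite $\operatorname{cor}\circ\operatorname{res}$ is multiplication by $[G:N]=|T|$, which is invertible on the $|N|$-torsion group $M(N)$, giving surjectivity of restriction onto $M(N)^T$ directly. Your approach buys uniformity (every vanishing statement is the same coprimality observation) at the cost of the spectral-sequence machinery; the transfer argument is more elementary but requires the separate identification of the image of restriction with the invariants. Both are complete proofs, and your final remark that $M(N)^T\cong M(N)_T$ via the norm map is a correct sanity check that a homological version would agree.
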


First, by \cite[Theorem 4.7.3]{KarpilovskyProjective},
$M(Q_1)\cong C_q\times C_q$ and $Q_2$ admits a trivial Schur multiplier.
Therefore, since a Schur multiplier of a cyclic group is trivial, we get by Lemma~\ref{lemma:semicoprimeSchur} that
\begin{equation*}
M(G_2)=M(H_2)=1.
\end{equation*}
We are left with the computation of $M(G_1)$ and $M(H_1)$.
As written above $M(Q_1)\cong C_q\times C_q$. In fact, $M(Q_1)$ is generated by the cohomology classes $\alpha$ and $\beta$ which are determined by the
following relations in the corresponding twisted group algebras $\mathbb{C}^{\alpha}Q_1$ and $\mathbb{C}^{\beta}Q_1$
\begin{align*}
\alpha:& [u_{\tau},u_{\sigma}]=\zeta,\quad [u_{\tau},u_{\rho}]=1, \\
\beta:& [u_{\tau},u_{\sigma}]=1,\quad [u_{\tau},u_{\rho}]=\zeta.
\end{align*}
Here $\zeta$ denotes a primitive $q$-th roots of unity.
Notice, that for $i=1,2$,
$$[u_{\tau^{\pi_i}},u_{\rho^{\pi_i}}]=[u_{\tau^w},u_{\rho}]=[u_{\tau},u_{\rho}]^w.$$
Therefore, $\beta$ is not invariant under the action of $\langle \pi _i \rangle$ for $i=1,2$.
We need to check whether $\alpha$ is invariant.
It turns out that $\alpha$ is invariant if and only if $p=2$.
Indeed, in $\mathbb{C}^{\alpha}Q_1$
$$[u_{\tau^{\pi_i}},u_{\sigma^{\pi_i}}]=[u_{\tau}^w,u_{\sigma}^w]=\zeta^{w^2}.$$
Therefore, $\alpha$ is invariant if and only if $w^2 \equiv 1 \mod q$ which happens if and only if $p=2$ because $w^p \equiv 1 \mod q^2$.
As a consequence of the above we obtain the following.

\begin{proposition}\label{prop_SchurMultiplier}
With the above notations, if $p=2$
$$M(G) \cong M(H)  \cong C_q\times C_q\times C_p,$$
and for $p>2$
$$M(G) \cong M(H) \cong C_q\times C_p.$$
\end{proposition}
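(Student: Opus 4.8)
The plan is to compute $M(G)$ and $M(H)$ by repeatedly invoking the two structural tools already assembled, namely Theorem~\ref{th:directproductSchur} (Schur's formula for direct products) and Lemma~\ref{lemma:semicoprimeSchur} (the coprime semidirect product formula), and then to feed in the Schur multipliers of the building blocks $Q_1,Q_2$ together with the $\langle \pi_i \rangle$-invariance computation carried out in the text immediately above the statement. Since $G = G_1 \times G_2$ and $H = H_1 \times H_2$, I would first apply Theorem~\ref{th:directproductSchur} to each to get
\[
M(G) \cong M(G_1) \times M(G_2) \times (G_1 \otimes G_2), \qquad M(H) \cong M(H_1) \times M(H_2) \times (H_1 \otimes H_2).
\]
The tensor factors were already shown to be $G_1 \otimes G_2 \cong H_1 \otimes H_2 \cong C_q \times C_p$, and the factors $M(G_2)=M(H_2)=1$ were also already established via Lemma~\ref{lemma:semicoprimeSchur} using $M(Q_2)=1$. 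So everything reduces to computing $M(G_1)$ and $M(H_1)$.

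Next I would compute $M(G_1)$ and $M(H_1)$ directly from Lemma~\ref{lemma:semicoprimeSchur}, writing $G_1 = Q_1 \rtimes \langle \pi_1 \rangle$ and $H_1 = Q_1 \rtimes \langle \pi_2 \rangle$ as coprime semidirect products (recall $|Q_1| = q^3$ is coprime to $|\pi_i|$, a power of $p$). Since $\langle \pi_i \rangle$ is cyclic, $M(\langle \pi_i \rangle) = 1$, so the lemma gives $M(G_1) \cong M(Q_1)^{\langle \pi_1 \rangle}$ and $M(H_1) \cong M(Q_1)^{\langle \pi_2 \rangle}$. Now I invoke the computation already performed: $M(Q_1) \cong C_q \times C_q$ is generated by the classes $\alpha, \beta$, where $\beta$ is never $\langle \pi_i \rangle$-invariant (for either $i$), while $\alpha$ is $\langle \pi_i \rangle$-invariant precisely when $w^2 \equiv 1 \bmod q$, which holds if and only if $p = 2$. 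Crucially, this invariance criterion depends only on $w$ and $q$ and not on whether $\pi_i$ has order $p$ or $p^2$, so it applies identically to both $\langle \pi_1 \rangle$ and $\langle \pi_2 \rangle$. Hence $M(Q_1)^{\langle \pi_1 \rangle} \cong M(Q_1)^{\langle \pi_2 \rangle}$, equal to $C_q$ when $p=2$ and trivial when $p>2$; in particular $M(G_1) \cong M(H_1)$.

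Finally I would assemble the pieces. Combining $M(G_1) \cong M(H_1)$ (equal to $C_q$ if $p=2$, trivial if $p>2$), $M(G_2)=M(H_2)=1$, and the common tensor factor $C_q \times C_p$, the direct product formula yields
\[
M(G) \cong M(H) \cong \begin{cases} C_q \times C_q \times C_p, & p = 2, \\ C_q \times C_p, & p > 2, \end{cases}
\]
as claimed. The main conceptual point — and the only place where anything needs checking rather than bookkeeping — is the invariance computation distinguishing the $p=2$ and $p>2$ cases; but this has already been discharged in the paragraph preceding the proposition, so the proof itself is essentially an exercise in correctly bookkeeping the three tensor/multiplier contributions and observing that the invariance outcome is insensitive to the order of $\pi_i$, which is exactly what forces $M(G) \cong M(H)$.
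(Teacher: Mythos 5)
Your proposal is correct and follows essentially the same route as the paper: decompose $G$ and $H$ via Theorem~\ref{th:directproductSchur}, reduce to $M(G_1)$ and $M(H_1)$ using $M(G_2)=M(H_2)=1$ and the common tensor factor $C_q\times C_p$, and then apply Lemma~\ref{lemma:semicoprimeSchur} together with the $\langle\pi_i\rangle$-invariance computation on $M(Q_1)\cong C_q\times C_q$, whose outcome depends only on $w$ modulo $q$ and hence not on the order of $\pi_i$. The paper presents exactly this chain of computations in the text preceding the proposition and states the result as a consequence, so there is nothing to add.
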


We proceed to construct $H^2(G,F^*)$ using the exact sequence given in \eqref{eq:UCT}.
Observe that
$$G/G'\cong C_q\times C_q\times C_2\times C_4=\langle G'\rho_1 \rangle \times \langle G'\rho_2 \rangle \times \langle G'\pi_1 \rangle \times \langle G'\pi_2 \rangle.$$

Therefore, by equations~\eqref{eq:EXTdecomposition} and~\eqref{eq:cohoofcyclic} we get
$$\operatorname{Ext}(G/G',F^*)\cong C_q\times C_q\times C_p\times C_{p^2}.$$
\begin{corollary}\label{cor:H2Dade}
For $p = 2$ we have
\begin{equation*}
H^2(G,F^*)\cong \operatorname{Ext}(G/G',F^*)\times
\operatorname{Hom}(M(G),F^*)\cong \left(C_q\times C_q\times C_p\times C_{p^2}
\right)\times \left(C_q\times C_q\times C_p \right).
\end{equation*}
and for $p >2$ we get
\begin{equation*}
H^2(G,F^*)\cong \operatorname{Ext}(G/G',F^*)\times
\operatorname{Hom}(M(G),F^*)\cong \left(C_q\times C_q\times C_p \times C_{p^2}
\right)\times \left(C_q\times C_p \right).
\end{equation*}
\end{corollary}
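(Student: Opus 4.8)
The plan is to read off $H^2(G,F^*)$ directly from the universal coefficient sequence \eqref{eq:UCT}, feeding in the two computations already assembled above. Since \eqref{eq:UCT} splits (non-canonically), we obtain
\[
H^2(G,F^*)\cong \operatorname{Ext}(G/G',F^*)\times \operatorname{Hom}(M(G),F^*),
\]
so it suffices to pin down the two factors separately and then read off the answer according to the parity of $p$. The whole argument is an assembly of pieces, not a fresh computation.

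For the first factor I would simply invoke the value $\operatorname{Ext}(G/G',F^*)\cong C_q\times C_q\times C_p\times C_{p^2}$ obtained just before the statement, which follows from the decomposition of $G/G'$ as $C_q\times C_q\times C_p\times C_{p^2}$ together with \eqref{eq:EXTdecomposition} and \eqref{eq:cohoofcyclic}; this factor is insensitive to the parity of $p$. For the second factor I would apply $\operatorname{Hom}(-,F^*)$ to the Schur multiplier supplied by Proposition~\ref{prop_SchurMultiplier}. Because $\operatorname{Hom}(-,F^*)$ carries finite direct products to direct products and $\operatorname{Hom}(C_n,F^*)$ is the group of $n$-th roots of unity of $F$, each cyclic factor $C_q$ and $C_p$ of $M(G)$ is preserved under the standing root-of-unity hypotheses on $F$, so $\operatorname{Hom}(M(G),F^*)\cong M(G)$. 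Hence for $p=2$ the second factor is $C_q\times C_q\times C_p$ and for $p>2$ it is $C_q\times C_p$. Substituting both factors into the displayed splitting yields the two asserted isomorphisms.

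The only point requiring attention, rather than a genuine obstacle, is the identification $\operatorname{Hom}(M(G),F^*)\cong M(G)$: this is exactly where one must use that $F$ contains the relevant $q$-th and $p$-th roots of unity, equivalently that $\operatorname{Hom}(C_n,F^*)\cong C_n$ for the orders $n$ occurring in $M(G)$ (the same hypothesis underlies the already-cited value of $\operatorname{Ext}(G/G',F^*)$, where $p^2$-th roots of unity are needed). With this in hand the corollary reduces to direct substitution into the split sequence \eqref{eq:UCT}, drawing only on the previously established values of $\operatorname{Ext}(G/G',F^*)$ and $M(G)$, so no argument beyond this bookkeeping is needed.
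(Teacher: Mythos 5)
Your proposal is correct and matches the paper's (essentially unstated) argument: the corollary is read off directly from the split sequence \eqref{eq:UCT} by substituting the value of $\operatorname{Ext}(G/G',F^*)$ computed just before the statement and applying $\operatorname{Hom}(-,F^*)$ to the Schur multiplier from Proposition~\ref{prop_SchurMultiplier}. Your explicit remark that $\operatorname{Hom}(M(G),F^*)\cong M(G)$ requires $F$ to contain the relevant roots of unity is a point the paper leaves implicit (it is guaranteed by the later choice of $F$), but it is the same bookkeeping argument.
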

Notice that all the arguments above about $G$ are true also for $H$.

\subsection{The Yamazaki covers of $G$ and $H$}
From now on we will assume that $p=2$ and $q$ is any prime
satisfying the relations in Dade's groups. Note that we can then assume w.l.o.g. $w=-1$. Moreover we assume that $F=\mathbb{F}_r$ is a finite
field such that
\begin{itemize}
\item $r-1$ is divisible by $q$ but not by $q^2$,
\item $r-1$ is divisible by $4$ but not by $8$ and
\item $r^2-1$ is divisible by $8$ but not by $16$.
\end{itemize}
There exist infinitely many such fields, e.g. by Dirichlet's theorem on primes in arithmetic progressions.

This allows us to give the Yamazaki covers of $G$ and $H$ using less notation, though it is not hard to give them also in case $p>2$. But the difference observed between the Schur multipliers in Proposition~\ref{prop_SchurMultiplier} turns out to be crucial for our arguments, so we concentrate on this case. See Remark~\ref{rem:OddCase} about the case $p>2$.

Let $\zeta$ be a primitive $q$-th and $\xi$ a primitive $4$-th root of unity in $F$. In order to construct the Yamazaki covers of $G$ and $H$ we will need to describe the group $A$ in the construction after Theorem~\ref{Yamazaki cover} as computed in the previous subsection and in particular in Corollary~\ref{cor:H2Dade}.
Let
\begin{align*}
H^2(G,F^*) =&  \operatorname{Hom}(M(G),F^*) \times \operatorname{Ext}(G/G',F^*) \\
 =& (\langle  \alpha \rangle \times \langle \beta \rangle \times \langle \gamma \rangle) \times (\langle \kappa \rangle \times \langle \lambda \rangle \times \langle \mu \rangle \times \langle \nu \rangle),
\end{align*}
where
\begin{itemize}
\item $\alpha$ is of order $q$, determined by
$[u_{\rho_1},u_{\rho_2}]=\zeta$.
 \item $\beta$ is of order $2$,
determined by $[u_{\pi_1},u_{\pi_2}]=-1$.
 \item $\gamma$ is of
order $q$, determined by $[u_{\rho_1},u_{\sigma_1}]=\zeta$. \item
$\kappa$ is of order $q$ determined by $u_{\rho_1}^q=\zeta$. \item
$\lambda$ is of order $q$ determined by $u_{\rho_2}^q=\zeta$.
\item $\mu$ is of order $4$ determined by $u_{\pi _1}^4=\xi$.
\item $\nu$ is of order $2$ determined by $u_{\pi _2}^4=\xi$.
\end{itemize}

Notice, that from the above the only cohomology class in which the order of the cocycle is bigger than the order of the cohomology class is
for $\nu$. Here the order of $\nu$ is $2$ and the order of the corresponding cocycle is $4$. Therefore we may consider the extending group $A$ to be like $H^2(G,F^*)$ with the only difference being that the $C_2$ generated by $\nu$ in $H^2(G,F^*)$ will have a representative cocycle $\bar{\nu}$ in $A$ which will generate a $C_4$.

Now in order to construct the Yamazaki cover we need to construct a cohomology class $\beta _{(G,A)}\in H^2(G,A)$ which will correspond to
the central extension~\eqref{eq:Yamazaki1}. Let $\{\tilde{g}\}_{g \in G}$ be a section of $G$ in $G^*$ corresponding to ~\eqref{eq:Yamazaki1}.  Then, abusing notation, $\beta _{(G,A)}$ can be chosen to be the cohomology class determined by (compare with the classes given above)
\begin{align*}
&[u_{\rho_1}, u_{\rho_2}]=\zeta ,\quad [u_{\pi_1}, u_{\pi_2}]=-1 ,\quad [u_{\rho_1}, u_{\sigma _1}]=\zeta , \\
&u_{\rho_1}^q=\zeta, \quad u_{\rho_2}^q=\zeta , \quad u_{\pi _1}^4=\xi ,\quad u_{\pi _2}^2=\xi.
\end{align*}

This leads us also to the Yamazaki covers of $G$ and $H$ over $F$. Since from now on we will only work with these covers and their subgroups we will use the same notations for the elements as before in the ``uncovered'' groups. Here we will introduce cyclic subgroup $\langle x \rangle$, $\langle y \rangle$ and $\langle z \rangle$ corresponding to the cohomology classes $\alpha$, $\beta$ and $\gamma$ respectively. The orders of the other generators change according to the cohomology classes $\kappa$, $\lambda$, $\mu$ and $\nu$.
We will construct both Yamazaki covers as the quotient of the same infinite group.

\textbf{Notation:} Let $Y$ be a group generated by elements $\sigma_1$, $\sigma_2$, $\rho_1$, $\rho_2$, $\tau_1$, $\pi_1$, $\pi_2$, $x$, $y$ and $z$ subject to the following relations:

\begin{align*}
 \ \sigma_2^{q^2} &= x^q = \rho_2^{q^2} = y^2 = \pi_2^8 = z^q = \tau_1^q = \rho_2^{q^2} = \sigma_1^q = \pi_1^{16} = 1, \ \sigma_2^q =: \tau_2, \\
\sigma_2^{\rho_2} &= \sigma_2 \tau_2, \  \rho_1^{\sigma_1} = \tau_1 \rho_1, \ \tau_1^{\sigma_1} = z\tau_1, \ \rho_2^{\rho_1} = x\rho_2. \\
\end{align*}

Moreover we have $x, y, z \in Z(Y)$ and unless otherwise specified in the relations above for $g,h \in \{ \sigma_1, \sigma_2, \rho_1, \rho_2, \tau_1\}  $ we have $[g,h] = 1$ in $Y$.

\begin{lemma}\label{lemma_YamazakiCover}
Let $Y$ be the group described above. Let $Y(G)$ be the quotient of $Y$ in which $\pi_i$ commutes with $\sigma_j$, $\rho_j$, $\tau_j$ for $i \neq j$ and which is additionally subject to the following relations
$$\sigma_1^{\pi_1} = \sigma_1^{-1}, \ \sigma_2^{\pi_2} = \sigma_2^{-1}, \ \tau_1^{\pi_1} = z\tau_1^{-1}, \ \ \pi_2^{\pi_1} = y\pi_2. $$
Let $Y(H)$ be the quotient of $Y$ in which $\pi_i$ commutes with $\sigma_i$, $\rho_i$, $\tau_i$ for $i \in \{1,2\}$ and where additionally we have the relations
$$\sigma_2^{\pi_1} = \sigma_2^{-1}, \ \tau_1^{\pi_2} = z\tau_1^{-1}, \ \sigma_1^{\pi_2} = \sigma_1^{-1}, \ \pi_1^{\pi_2} = y\pi_1.$$

Then $Y(G)$ and $Y(H)$ are minimal Yamazaki covers of $G$ and $H$ respectively.
\end{lemma}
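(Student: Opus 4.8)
The plan is to verify that the central extensions $1 \to Z \to Y(G) \to G \to 1$ and $1 \to Z \to Y(H) \to H \to 1$ satisfy the four hypotheses of Theorem~\ref{th:YamazakiGT}; the conclusion that $Y(G)$ and $Y(H)$ are \emph{minimal} Yamazaki covers then follows at once. Since $G$ and $H$ have isomorphic second cohomology groups by Corollary~\ref{cor:H2Dade}, and the two presentations are symmetric under interchanging the roles of the indices $1,2$, I would run the argument in full for $Y(G)$ and simply remark that the case of $Y(H)$ is identical. Throughout, $F=\mathbb{F}_r$ is as fixed above, so that the maximal finite $q$-subgroup of $F^*$ has order $q$ and the maximal finite $2$-subgroup has order $4$.

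First I would pin down the central subgroup. Set
$$Z = \langle x, y, z \rangle \cdot \langle \rho_1^q, \rho_2^q, \pi_1^4, \pi_2^2 \rangle.$$
Here $x,y,z$ are central by fiat, while $\rho_1^q,\rho_2^q,\pi_1^4,\pi_2^2$ are checked to be central directly against the defining relations of $Y(G)$ (for instance $(\rho_1^q)^{\sigma_1}=(\tau_1\rho_1)^q=\rho_1^q$ as $\tau_1^q=1$ and $[\tau_1,\rho_1]=1$, and $(\rho_1^q)^{\rho_2}=(x^{-1}\rho_1)^q=\rho_1^q$ as $x^q=1$). Modding out $Z$ imposes $x=y=z=1$, $\rho_1^q=\rho_2^q=1$, $\pi_1^4=1$ and $\pi_2^2=1$, after which the remaining relations collapse exactly to Dade's presentation of $G$ with $w=-1$; this establishes $Y(G)/Z\cong G$ and the finiteness of $Y(G)$.

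The heart of the matter is the abelianization and the intersection $Z\cap Y(G)'$. I would compute
$$Y(G)/Y(G)' \cong C_{q^2}\times C_{q^2}\times C_{16}\times C_8,$$
generated by the images of $\rho_1,\rho_2,\pi_1,\pi_2$ (the generators $\sigma_1,\sigma_2,\tau_1$ die, since $\sigma_1^2,\sigma_2^2,\tau_1,\tau_2=\sigma_2^q$ all lie in $Y(G)'$ and $\gcd(2,q)=1$). This simultaneously yields $\rk(Y(G)/Y(G)')=4=\rk(G/G')$, the second hypothesis, and shows that $\rho_1^q,\rho_2^q,\pi_1^4,\pi_2^2$ survive in the abelianization as independent elements of orders $q,q,4,4$; hence they avoid $Y(G)'$ and generate a complement to $Z\cap Y(G)'$ in $Z$, giving the fourth (splitting) hypothesis. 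The commutator generators $x,y,z$ lie in $Y(G)'$, and comparing them against $M(G)\cong C_q\times C_q\times C_2$ (Proposition~\ref{prop_SchurMultiplier}) gives $Z\cap Y(G)'=\langle x,y,z\rangle\cong C_q\times C_q\times C_2\cong\Hom(M(G),F^*)$, the first hypothesis. For the third hypothesis I would read off that $(Z/(Z\cap Y(G)'))_q\cong\langle \rho_1^q,\rho_2^q\rangle\cong C_q\times C_q$ is a product of $\rk(\Ext((G/G')_q,F^*))=2$ cyclic groups of order $q$, and $(Z/(Z\cap Y(G)'))_2\cong\langle \pi_1^4,\pi_2^2\rangle\cong C_4\times C_4$ is a product of $\rk(\Ext((G/G')_2,F^*))=2$ cyclic groups of order $4$.

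The step I expect to require the most care is this last one, specifically the factor coming from $\pi_2$. Although the cohomology class $\nu$ has order $2$, the third hypothesis of Theorem~\ref{th:YamazakiGT} demands that \emph{every} cyclic constituent of the $2$-part of $Z/(Z\cap Y(G)')$ have order equal to $p^m=4$; this forces the representing cocycle $\bar\nu$, and hence $\pi_2^2$, to have order $4$, so that $\pi_2$ has order $8$ in the cover rather than order $4$. This is exactly where a Yamazaki cover diverges from a Schur cover, and getting the order of $\pi_2$ right---together with confirming that $\pi_2^2\notin Y(G)'$ while $y=[\pi_1,\pi_2]\in Y(G)'$, so that the two genuinely split off---is the only delicate bookkeeping. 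Everything else reduces to routine commutator calculus in the given presentation, and the identical computation for $Y(H)$ completes the proof.
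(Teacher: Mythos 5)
Your proposal is correct and follows essentially the same route as the paper: both proofs verify the four hypotheses of Theorem~\ref{th:YamazakiGT} for the central subgroup $Z$ generated by $x,y,z,\rho_1^q,\rho_2^q,\pi_1^4,\pi_2^2$, identify $Z\cap Y(G)'=\langle x,y,z\rangle\cong\Hom(M(G),F^*)$, and dispose of the remaining conditions via the abelianization $Y(G)/Y(G)'\cong C_{q^2}\times C_{q^2}\times C_{16}\times C_8$. Your bookkeeping is in fact slightly more careful than the paper's, which lists the last direct factor of $Z$ as $\langle\pi_2^4\rangle$ where order considerations (namely $|Y(G)|=|G|\cdot|Z|$ together with $\pi_2^2\mapsto 1$ in $G$) show it must be $\langle\pi_2^2\rangle$ of order $4$, exactly as you argue in your discussion of the cocycle $\bar\nu$.
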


\textbf{Remark:} Using semi-direct products one can write:
\begin{align*}
Y(G) &= (( \langle \sigma_2 \rangle \rtimes (\langle x \rangle \times \langle \rho_2 \rangle )) \rtimes (\langle y \rangle \times \langle \pi_2 \rangle )) \rtimes ((( \langle z \rangle \times \langle \tau_1 \rangle \times \langle \rho_1 \rangle) \rtimes \langle \sigma_1 \rangle ) \rtimes \langle \pi_1 \rangle), \\
Y(H) &=  (( \langle \sigma_2 \rangle \rtimes (\langle x \rangle \times \langle \rho_2 \rangle )) \rtimes (\langle y \rangle \times \langle \pi_1 \rangle )) \rtimes ((( \langle z \rangle \times \langle \tau_1 \rangle \times \langle \rho_1 \rangle) \rtimes \langle \sigma_1 \rangle ) \rtimes \langle \pi_2 \rangle).
\end{align*}
Note that the only difference when writing this way is an interchange between $\pi_1$ and $\pi_2$.

\begin{proof}
We will use Theorem~\ref{th:YamazakiGT} and Corollary~\ref{cor:H2Dade}. In the notation of Theorem~\ref{th:YamazakiGT} we have
$$Z = \langle x \rangle \times \langle y \rangle \times \langle z \rangle \times \langle \rho_1^q \rangle \times \langle \rho_2^q \rangle  \times \langle \pi_1^4 \rangle \times \langle \pi_2^4 \rangle. $$
Moreover
$$Y(G)' = \langle x,y,z,\sigma_2,\tau_1,\sigma_1 \rangle.$$
So $Y(G)' \cap Z = \langle x \rangle \times \langle y \rangle \times \langle z \rangle \cong \text{Hom}(M(G), F^*)$. The other conditions are now easy to check.

The same statements hold for $Y(H)$, even using formally the same elements.
\end{proof}

\subsection{Proof of Theorem 1}
We keep the assumptions from the previous subsection and we will show that in this case $G \not \sim_F H$. We will use the minimal Yamazaki covers $Y(G)$ and $Y(H)$ introduced in Lemma~\ref{lemma_YamazakiCover} and explicit elements will refer to these groups.

To show that $G$ and $H$ are not in relation over $F$ we will work
with Wedderburn decompositions of $FY(G)$ and $FY(H)$.
The groups $Y(G)$ and $Y(H)$ are supersolvable as can be seen by their defining relations and hence both groups are monomial, i.e. each irreducible character of these groups is induced by a linear character of a subgroup. This holds over $\mathbb{C}$ by \cite[Theorem 6.22]{Isaacs} and over finite fields of characteristic not dividing $|G|$ by \cite[Corollary 8]{BrochedelRio}.

Each Wedderburn component of $FY(G)$ and $FY(H)$ corresponds to
a Wedderburn component of a twisted group algebra $F^\varphi G$
and $F^\varphi H$ respectively. Let $B$ be such a Wedderburn
component. Then in fact we can easily determine $\varphi$ from the
character $\chi$ corresponding to $B$. Namely if we view $\varphi$
as a product of powers of the generators $\alpha$, $\gamma$, $\kappa$, $\lambda$, $\beta$, $\mu$ and $\nu$, then
we can read of $\varphi$ from the powers of $\zeta$, $-1$ and
$\xi$ appearing in the values of $\varphi$ on $x$, $z$, $\rho_1^q$, $\rho_2^q$, $y$,
$\pi_1^{4}$ and $\pi_2^{4}$ respectively.
This follows from the natural correspondence between projective
representations and 2-cocycles as explained in Section~\ref{sec:ProjRep} .

Denote by $F_2$ the field obtained from adjoining a primitive $8$-th root of unity to $F$ and by $F_4$ the field obtained
from adjoining a primitive $16$-th root of unity to $F$. Note that these fields are different by our choice of $F$.

We will show that there is a cohomology class $\psi$ in
$H^2(G,F^*)$ such that every Wedderburn component of $F^\psi G$ is
a matrix ring over the field $F_4$, but there is no cohomology
class $\varphi$ in $H^2(H, F^*)$ such that $F^\varphi H$ is the
direct sum of matrix rings over $F_4$.
  This will be proven in the next two lemmas and clearly imply $G \not \sim_F H$.

\begin{lemma}
For $\psi = \gamma \mu$ the Wedderburn decomposition of $F^\psi G$ is a direct sum of matrix rings over $F_4$.
\end{lemma}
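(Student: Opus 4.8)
The plan is to exploit the product structure $G = G_1 \times G_2$. The class $\psi = \gamma\mu$ is determined entirely by conditions on the elements $\rho_1,\sigma_1,\pi_1$, all of which lie in $G_1$; both $\gamma$ (the relevant piece of $\operatorname{Hom}(M(G_1),F^*)$) and $\mu$ (a piece of $\operatorname{Ext}(G/G',F^*)$ supported on $\langle\pi_1\rangle$) therefore lie in the image of the inflation map $H^2(G_1,F^*)\to H^2(G,F^*)$ attached to the projection $G\to G_1$. Choosing a representing cocycle depending only on the $G_1$-coordinates yields the standard isomorphism
\[ F^\psi G \cong F^{\psi_1}G_1 \otimes_F FG_2, \]
where $\psi_1$ is the restriction of $\gamma\mu$ to $G_1$. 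Since every simple factor of such a tensor product of semisimple algebras has the form $M_a(F_4)\otimes_F M_b(F)\cong M_{ab}(F_4\otimes_F F)\cong M_{ab}(F_4)$, it suffices to prove two claims: (i) $FG_2$ is a direct sum of matrix rings over $F$; and (ii) $F^{\psi_1}G_1$ is a direct sum of matrix rings over $F_4$ (recall $F_4=F(\zeta_{16})=\mathbb{F}_{r^4}$ by our choice of $F$).

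For (i), the group $G_2=Q_2\rtimes\langle\pi_2\rangle$ is supersolvable (being a quotient of the supersolvable $Y(G)$), hence monomial over $F$ by the references already quoted, so each Wedderburn component is $M_n(\mathbb{F}_{r^k})$ with $\mathbb{F}_{r^k}$ generated over $F$ by the values of the corresponding character. I would then check these values lie in $F$: the linear characters factor through $G_2/G_2'\cong C_q\times C_2$ and take values among $q$-th and square roots of unity, which lie in $F$ because $q\mid r-1$ and $2\mid r-1$; the higher-dimensional irreducibles are induced and vanish off proper subgroups, the crucial point being that the primitive $q^2$-th root of unity $\eta$ carried by the order-$q^2$ element $\sigma_2$ enters only through $\eta^q$ (a $q$-th root, on the centre) or cancels off the centre via the geometric sum $\sum_{j=0}^{q-1}\eta^{qj}=0$. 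Hence every $\mathbb{F}_{r^k}=F$ and $FG_2$ splits over $F$.

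For (ii), again $G_1$ is supersolvable and $F^{\psi_1}G_1$ is semisimple (as $\operatorname{char}(F)\nmid|G_1|$), so every component is $M_n(\mathbb{F}_{r^k})$. The upper bound $\mathbb{F}_{r^k}\subseteq F_4$ is easy: representing $\gamma$ by a commutator cocycle (trivial on powers) makes $u_g^q=1$ for every $g\in Q_1$, so the images of elements of $Q_1$ have eigenvalues among $q$-th roots of unity, which already lie in $F$, while $u_{\pi_1}^4=\xi$ forces $u_{\pi_1}^{16}=1$, so its eigenvalues are $16$-th roots of unity lying in $F_4$. Thus all character values lie in $F(\zeta_q,\zeta_{16})=F_4$, giving $\mathbb{F}_{r^k}\subseteq F_4$. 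The content is the lower bound $F_4\subseteq\mathbb{F}_{r^k}$ for \emph{every} component, and this is the main obstacle: it is exactly here that $\mu$ must be used, and one must rule out components over the proper subfields $F$ and $F_2=\mathbb{F}_{r^2}$.

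To establish the lower bound I would view $F^{\psi_1}G_1$ as the crossed product $F^{\gamma}Q_1 * \langle\pi_1\rangle$ with $u_{\pi_1}^4=\xi$. First, $F^\gamma Q_1$ splits over $F$, since all its character values are sums of $q$-th roots of unity; write $F^\gamma Q_1=\bigoplus_i M_{n_i}(F)$. The order-$4$ automorphism induced by $\pi_1$ permutes these blocks, and on each $\langle\pi_1\rangle$-stable block the adjoined unit satisfies $u_{\pi_1}^4=\xi$ with $\xi$ a primitive $4$-th root of unity. Because $r\equiv 5\pmod 8$, the four roots of $X^4-\xi$ are the primitive $16$-th roots $\zeta_{16}^{1},\zeta_{16}^{5},\zeta_{16}^{9},\zeta_{16}^{13}$, which form a single Frobenius orbit; hence $X^4-\xi$ is irreducible of degree $4$ over $F$. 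I would argue that this irreducibility forces the centre of each resulting component to absorb a primitive $16$-th root of unity, so that $\mathbb{F}_{r^k}\supseteq F_4$ and therefore $\mathbb{F}_{r^k}=F_4$. The delicate point is controlling how $u_{\pi_1}$ interacts with the inner automorphisms of each block $M_{n_i}(F)$, which I would pin down through the explicit Clifford-theoretic description of crossed products of matrix algebras over finite fields and, as an independent check, by a direct \texttt{Wedderga} computation exactly as in the $D_8$ example above. Combined with (i) and the factor-wise formula $M_a(F_4)\otimes_F M_b(F)\cong M_{ab}(F_4)$, this shows every component of $F^\psi G$ is a matrix ring over $F_4$.
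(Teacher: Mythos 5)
Your opening reduction $F^{\psi}G \cong F^{\psi_1}G_1 \otimes_F FG_2$ is exactly the paper's first step, your verification that $FG_2$ splits over $F$ fills in a point the paper leaves implicit, and the upper bound $\mathbb{F}_{r^k} \subseteq F_4$ is correct. But the proof stops precisely where the actual content of the lemma begins. The lower bound --- that \emph{every} Wedderburn component of $F^{\psi_1}G_1$ has centre containing a primitive $16$-th root of unity --- is left as a plan: you ``would argue'' that the irreducibility of $X^4-\xi$ over $F$ forces each centre to absorb $\zeta_{16}$, and you defer the ``delicate point'' to an unspecified Clifford-theoretic analysis plus a \texttt{Wedderga} check. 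The computation cannot close the gap, since the groups depend on the parameters $q$ and $r$ and a machine calculation treats only one instance. More seriously, the crossed-product heuristic as stated is not valid: on a $\langle\pi_1\rangle$-stable block $B\cong M_n(F)$ of $F^{\gamma}Q_1$ on which $\pi_1$ acts by an \emph{inner} automorphism $\theta=\operatorname{conj}(c)$, one may replace $u_{\pi_1}$ by $v=c^{-1}u_{\pi_1}$, which centralises $B$ and satisfies $v^4=c^{-4}\xi=\delta^{-1}\xi$ for a scalar $\delta$ (here $c^4$ is scalar because $\theta^4=\operatorname{id}$). The component is then $B\otimes_F F[X]/(X^4-\delta^{-1}\xi)$, and nothing in your argument excludes $\delta^{-1}\xi$ from being a fourth power in $F^*$, in which case this splits into four matrix rings over $F$ rather than one over $F_4$. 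Ruling that out requires genuine input from the structure of $Q_1\rtimes\langle\pi_1\rangle$ and the choice of $\gamma$; the irreducibility of $X^4-\xi$ alone proves nothing.

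The paper supplies exactly this missing input by a different mechanism. Working monomially inside the Yamazaki cover $Y(G_1)$, it observes that each component of $F^{\psi_1}G_1$ comes from a pair $(S,T)$ with $\rho_1^q\in T$ and $z,\pi_1^8\notin T$, and that the component lies over $F_4$ once $S$ contains an element of order $16$ with no power in $T$. Since the Sylow $2$-subgroup of $Y(G_1)$ is cyclic of order $16$, this reduces to showing that every irreducible character of $R=Y(G_1)/\langle\rho_1^q,\pi_1^8\rangle$ whose kernel misses $z$ has odd degree, which the paper proves by combining Ito's theorem, the odd-degree McKay count for $N_R(\langle\bar\pi_1\rangle)$ from Isaacs' book, and an explicit enumeration of the irreducible characters of $R/\langle\bar z\rangle$. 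Some argument of this kind --- or, in your language, an actual computation of the obstruction $\delta$ above --- is indispensable; as written, your proposal only shows that the components lie over subfields of $F_4$, which is not enough to distinguish $G$ from $H$.
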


\begin{proof}
Both $\gamma$ and $\mu$ only influence the subgroup
$G_1 = Q_1 \rtimes \langle \pi_1 \rangle$, in the sense that we can choose a cocycle $\psi'$ representing $\psi$ such that $\psi'((g_1,g_2),(1,\tilde{g}_2)) = 1$ for every $g_1 \in G_1$ and $g_2, \tilde{g}_2 \in G_2$.
So $k^\psi G = kG_2 \otimes k^{\psi_1}G_1$, where $\psi_1$ denotes the restriction of
$\psi$ to $G_1$. It is hence sufficient to show that
$k^{\psi_1}G_1$ is a direct sum of matrix rings over $F_4$.
A minimal Yamazaki cover of $G_1$ over $F$ is given by
$$Y(G_1) = (( \langle z \rangle \times \langle \tau_1 \rangle \times \langle \rho_1 \rangle) \rtimes \langle \sigma_1 \rangle ) \rtimes \langle \pi_1 \rangle $$
where the orders of the generators and the relations between them are exactly as in $Y(G)$.

The Wedderburn decompositions of $FY(G_1)$ can also be computed in positive characteristic as described in \cite{BrochedelRio}. In particular each Wedderburn component corresponds to a pair $(S,T)$ of subgroups in $Y(G_1)$ such that $S$ has a linear character $\chi$ with kernel $T$ and the induction $\operatorname{ind}_S^{Y(G_1)}(\chi)$ of $\chi$ to $Y(G_1)$ is irreducible.
Moreover assume that $\operatorname{ind}_S^{Y(G_1)}(\chi)$ corresponds to some Wedderburn component of $F^{\psi_1}G_1$, i.e. we have $z, \pi_1^{8} \notin T$ and $\rho_1^q \in T$. Our claim will follow once we show that $S$ necessarily contains an element of order $16$ or equivalently:

\vspace*{.2cm}
\textit{Claim:} Every irreducible character of $Y(G_1)$ whose kernel contains $\rho_1$, but not $z$ and $\pi_1^8$, has odd degree.
\vspace*{.2cm}

The claim is true over $F$ if and only if it is true over $\mathbb{C}$.
To make the calculations a bit easier we use the bar-notation to denote the natural projection modulo $\langle \rho_1^q, \pi_1^8 \rangle$ and the reduction of $Y(G_1)$ and set $R = Y(G_1)/\langle \rho_1^q, \pi_1^8 \rangle$. We will
prove that any irreducible character of
$R$ whose kernel does
not contain $z$ has odd degree which will imply the claim.

First of all observe that $\langle \bar{z}, \bar{\tau}_1,
\bar{\rho}_1 \rangle$ is an abelian normal subgroup of $R$ of
index $2q$ and so Ito's Theorem \cite[Theorem 6.15]{Isaacs}
implies that the character degree of each irreducible character of
$R$ divides $2q$. So each irreducible character of odd degree has
degree $1$ or $q$. Note that the number of characters of degree
$1$ of $R$ equals $|R/R'| = |R/\langle \bar{z}, \bar{\tau}_1,
\bar{\sigma}_1 \rangle| = 2q$. By \cite[Theorem 13.26]{Isaacs}, a very special version of the McKay-conjecture, the
number of irreducible characters of odd degree of $R$ is the same
as that of $N_R(\langle \bar{\pi}_1 \rangle)$, the normalizer in $R$ of the cyclic subgroup $\langle \bar{\pi}_1 \rangle$. Now $N_R(\langle \bar{\pi}_1 \rangle) = \langle
\bar{z}, \bar{\rho}_1, \bar{\pi}_1 \rangle$ is an abelian group of
order $2q^2$ and has $2q^2$ irreducible characters of odd degree.
Moreover $R/\langle \bar{z} \rangle$ has also $2q$ characters of
degree $1$ and $\frac{q(q-1)}{2}$ irreducible characters of degree
$2$ which are those having $\bar{\tau}_1$ in its kernel. This follows since
$$R/\langle \bar{z}, \bar{\tau}_1 \rangle \cong \langle \bar{\rho}_1 \rangle \times (\langle \bar{\sigma}_1 \rangle \rtimes \langle \bar{\pi}_1 \rangle) \cong C_q \times D_{2q},$$
where $D_{2q}$ denotes a dihedral group of order $2q$, and $D_{2q}$ has exactly $\frac{q-1}{2}$ irreducible characters of
degree $2$. Moreover the subgroup $\langle \bar{\tau}_1,
\bar{\rho}_1, \bar{\sigma}_1 \rangle$ of $R/ \langle \bar{z}
\rangle$, which is an extraspecial $q$-group, has $q-1$
irreducible characters of degree $q$, see e.g. \cite[Theorem
31.5]{DornhoffA}. The induction of each of these characters, which
are all not real-valued, to $R/ \langle \bar{z} \rangle$ is
irreducible, since it is real on the real conjugacy class of
$\bar{\tau}_1$, and two of them induce the same character. So $R/\langle
\bar{z} \rangle$ has $\frac{q-1}{2}$ irreducible characters of
degree $2q$. Summing the squares of the degrees of the irreducible
characters of $R/\langle \bar{z} \rangle$ obtained so far we obtain
$$2q\cdot1^2 + \frac{q(q-1)}{2}\cdot2^2 + \frac{q-1}{2}\cdot(2q)^2 = 2q^3.$$
So there are no further irreducible characters of $R/\langle \bar{z} \rangle$. In particular from
all irreducible odd degree characters of $R$ only the $2q$ linear
characters of $R$ have $\bar{z}$ in its kernel. But since any other
irreducible odd degree character has degree $q$, there are $2q^2$
such characters and since
$$(2q^2-2q)q^2 = 2q^4 - 2q^3 = |R| - |R/ \langle \bar{z} \rangle|$$
these are actually all irreducible characters of $R$ which do not contain $\bar{z}$ in its center. Hence the claim follows. This also finishes the proof of the lemma.
\end{proof}

\begin{lemma}
There is no $\varphi \in H^2(H,F^*)$
such that every direct summand of $F^\varphi H$ is a matrix
algebra over $F_4$.
\end{lemma}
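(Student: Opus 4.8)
The plan is to construct, for every $\varphi \in H^2(H,F^*)$, a single Wedderburn component of $F^\varphi H$ whose centre $Z$ satisfies $4 \nmid [Z:F]$, so that $Z \neq F_4 = \mathbb{F}_{r^4}$; this one ``wrong'' component already contradicts the assumption that all summands are matrix rings over $F_4$. The guiding principle is that $[F_4:F]=4$ is a pure $2$-power, and that in the Yamazaki cover the only generator of $2$-power order exceeding $8$ is $\pi_1$ (of order $16$), all other generators having order a power of $q$, or $8$ (namely $\pi_2$), or $2$. Hence a primitive $16$-th root of unity can enter a character field only through $\pi_1$, and the whole point is to exhibit components in which this contribution is suppressed.

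First I would exploit the normal subgroup $\langle \sigma_2\rangle \cong C_{q^2}=H_2'$. Since $\sigma_2 \in H'$ and none of the cocycles $\alpha,\beta,\gamma,\kappa,\lambda,\mu,\nu$ assigns a nontrivial value to $u_{\sigma_2}^{q^2}$, the restriction of $\varphi$ to $\langle\sigma_2\rangle$ is trivial in $H^2(C_{q^2},F^*)\cong F^*/(F^*)^{q^2}$; thus $u_{\sigma_2}$ is a genuine unit of order $q^2$ in $F^\varphi H$ and I may run Clifford theory along $\langle\sigma_2\rangle$. Pick a linear character $\theta$ of $\langle\sigma_2\rangle$ of order exactly $q$, say $\theta(\sigma_2)=\omega$ for a primitive $q$-th root of unity $\omega$. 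From $\sigma_2^{\rho_2}=\sigma_2^{1+q}$ one gets $\theta^{\rho_2}=\theta$, while $\sigma_2^{\pi_1}=\sigma_2^{-1}$ gives $\theta^{\pi_1}=\theta^{-1}\neq\theta$ because $q$ is odd. Consequently the $H$-orbit of $\theta$ is $\{\theta,\theta^{-1}\}$ and its inertia group is the index-$2$ subgroup $T=H_1\times Q_2\times\langle\pi_1^2\rangle$, which does not contain $\pi_1$.

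Now I would take any irreducible $\varphi$-representation $\rho$ of $H$ lying over $\theta$. By projective Clifford theory such a $\rho$ exists and is induced, $\rho=\operatorname{Ind}_T^H\tilde\rho$, from an irreducible $\varphi|_T$-representation $\tilde\rho$. The decisive observation is that the Sylow $2$-subgroup of (the relevant cover of) $T$ is generated by $\pi_1^2$, $\pi_2$ and central involutions, all of order at most $8$, since the unique order-$16$ element $\pi_1$ lies outside $T$. Hence every eigenvalue of $\tilde\rho$ on a $2$-element is a root of unity of order dividing $8$, so all values of $\tilde\chi$ lie in a field whose $2$-part is contained in $F(\zeta_8)=F_2=\mathbb{F}_{r^2}$; the cocycle factors occurring in the induced-character formula take values in $\langle\zeta,\xi\rangle\subseteq F^*$ and therefore do not enlarge this $2$-part. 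As the values of $\chi_\rho=\operatorname{Ind}_T^H\tilde\chi$ are $F$-combinations of values of $\tilde\chi$, the centre $F(\chi_\rho)$ of the associated component still has $2$-part inside $\mathbb{F}_{r^2}$, so $4\nmid[F(\chi_\rho):F]$ and $F(\chi_\rho)\neq F_4$, as desired.

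The step I expect to be the main obstacle is the uniformity in $\varphi$: the ``cross'' classes $\alpha$ (pairing $\rho_1$ with $\rho_2$) and $\beta$ (pairing $\pi_1$ with $\pi_2$), together with the $Q_1$-classes $\gamma$ and $\kappa$, obstruct any clean factorisation of $F^\varphi H$ as a tensor product over $H_1$ and $H_2$, so one cannot simply analyse the two direct factors separately. The induction-from-$T$ argument is tailored to avoid this difficulty, as it never separates $H_1$ from $H_2$ and uses only that $\pi_1$ inverts $\sigma_2$ (forcing $\pi_1\notin T$) and that $\varphi$ is trivial on $\langle\sigma_2\rangle$. Finally, it is instructive to see why the same recipe must fail for $G$: there $\pi_1$ centralises $Q_2$, hence fixes $\theta$ and lies in the inertia group, so the order-$16$ contribution of $\pi_1$ is no longer excluded --- precisely the interchange of $\pi_1$ and $\pi_2$ that separates $H$ from $G$.
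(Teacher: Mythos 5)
Your proposal is correct, but it follows a genuinely different route from the paper. The paper also reduces everything to the structural fact you isolate --- that in $H$ (unlike in $G$) the element $\pi_1$, whose lift in the Yamazaki cover has order $16$, inverts $\sigma_2$ --- but it executes this via the Broch\'e--del R\'{\i}o description of Wedderburn components by monomial pairs $(S,T)$ in $Y(H)$: for each $\varphi$ it builds an explicit subgroup $S$ with $\sigma_2 \in S \setminus T$ and checks by hand, in four cases according to whether $x$ and $z$ lie in $T$ (i.e.\ whether $\alpha$ and $\gamma$ occur in $\varphi$), that the induced character is irreducible. Your argument replaces this case analysis by a single application of Clifford theory along the normal subgroup $\langle \sigma_2\rangle$, on which every $\varphi$ is trivial: a character $\theta$ of order exactly $q$ has inertia group $T = H_1 \times Q_2 \times \langle \pi_1^2\rangle$ of index $2$, every irreducible module over $\theta$ is induced from $T$, and the induced character vanishes off $T$ and takes values in a field whose degree over $F$ has $2$-part at most $2$, so its component cannot be a matrix ring over $F_4 = \mathbb{F}_{r^4}$. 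This is uniform in $\varphi$ and arguably cleaner, at the price of invoking projective Clifford theory and the identification of the centre of a component with the field of character values over a finite field. One step you should make explicit: the passage from ``eigenvalues of $\tilde\rho$ on $2$-elements have order dividing $8$'' to ``the $2$-part of $F(\tilde\chi)$ lies in $F(\zeta_8)$'' uses that for $g \in T$ the units $u_{g_2}$ and $u_{g_{2'}}$ attached to the $2$-part and $2'$-part of $g$ commute (the commutator pairing $\varphi(a,b)\varphi(b,a)^{-1}$ is killed by both coprime orders, hence trivial), so eigenvalues factor accordingly, and that odd-order units contribute only odd-degree extensions together with scalars from $F^*$, whose $2$-part already lies in $F$. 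With that spelled out, your proof is complete, and your closing observation correctly pinpoints why the argument cannot be run for $G$.
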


\begin{proof}
Since all groups involved are monomial a Wedderburn component of
$F^\varphi H$ is determined by a pair $(S,T)$ of subgroups of $Y(H)$ which satisfy the following. $S$ has a linear character $\chi$ with kernel $T$ such that $\operatorname{ind}_S^{Y(H)}(\chi)$ is irreducible and $\chi$ has values on $x$, $y$, $z$, $\rho_1^q$, $\rho_2^q$, $\pi_1^{p^2}$ and $\pi_2^{p^2}$ which correspond to the powers of the natural generators $\alpha$, $\beta$, $\gamma$, $\kappa$, $\lambda$, $\mu$ and $\nu$ appearing in $\varphi$ respectively.
The corresponding matrix algebra lies over $F_4$ if
and only if $S$ contains an element of order $16$ none of whose
powers lies in $T$. So it is sufficient to show that for any
$\varphi \in H^2(H, F^*)$ there is a corresponding pair $(S,T)$ such that $S$ contains no element of order $16$.
Instead of describing $\varphi$ we will distinguish the different
$T$. For example the condition $x \in T$ means that in writing
$\varphi$ in the natural generators the factor $\alpha$ does not
appear. We will study some cases separately. Note that we can make
assumptions only on $\langle x,y,z,\rho_1^q, \rho_2^q,
\pi_1^{p^2}, \pi_2^{p^2}  \rangle \cap T$, since this fixes which
natural generators appear in $\varphi$.
The general goal in all cases will be to achieve
$\sigma_2 \in S \setminus T$, because then an element of
order $16$ does not commute with $S/T$, so there can be no element of order $16$ in $S$ which has no power in $T$. Set $Z = Z(Y(H)) = \langle x,\rho_2^q, y, \pi_1^2, z, \rho_1^q, \pi_2^2 \rangle$.

\begin{itemize}
\item[Case 1:] $x,z \in T$.\\
Let $S = \langle Z, \sigma_2, \rho_2, \tau_1, \rho_1, \sigma_1,
\pi_2 \rangle$. Then $S' = \langle \sigma_2^q, z, \tau_1, \sigma_1
\rangle$ and let $T$ be a subgroup of $S$ containing $S'$ such
that $S/T$ is cyclic and $T$ does not contain $\sigma_2$. Let
$\chi$ be a linear character of $S$ with kernel $T$. Then $\chi' =
\operatorname{ind}_S^{Y(H)} \chi$ is of degree $2$ and
$\chi'(\sigma_2) = \chi(\sigma_2) + \chi(\sigma_2)^{-1} \neq 2$.
Moreover $\chi'$ is irreducible, since otherwise it would
decompose into two linear characters. But linear characters
contain $\sigma_2$ in its kernel, since $\sigma_2 \in Y(H)'$, and
then we would have $\chi'(\sigma_2) = 2$.

\item[Case 2:] $x \notin T$, $z \in T$.\\
Set $S = \langle Z, \sigma_2, \rho_2, \tau_1, \sigma_1, \pi_2
\rangle$. Then $S' = \langle \sigma_2^q, z, \tau_1, \sigma_1
\rangle$. Let $T$ again be a subgroup of $S$ containing $S'$ such
that $S/T$ is cyclic, $\sigma_2 \notin T$ and let $\chi$ and
$\chi'$ be defined similarly as in Case 1. Then $\chi'$ is a
character of degree 10 such that $\chi'(\sigma_2) =
5(\chi(\sigma_2)+\chi(\sigma_2)^{-1})$. This means that the
restriction of $\chi'$ to $\langle \sigma_2, \pi_1 \rangle$ decomposes into five
$2$-dimensional characters. So if $\chi'$ decomposes it decomposes
into characters of even degree. But on the other hand its
restriction to $(\langle x \rangle \times \langle \rho_2 \rangle )
\rtimes \langle \rho_1 \rangle$ has to decompose into characters
of degree $5$, since these are the only characters of this group
not having $x$ in the kernel.

\item[Case 3:] $x \in T$, $z \notin T$.\\
Set $S = \langle Z, \sigma_2, \rho_2, \tau_1, \rho_1, \pi_2
\rangle$. Note that $\tau_1^{\pi_2} = z\tau_1^{-1} =
\tau_1(z\tau_1^3)$. So we have $S' = \langle \sigma_2^q, x,
z\tau_1^3 \rangle$. Again let $T$ be a normal subgroup of $S$ such
that $S/T$ is cyclic, $\sigma_2 \notin T$ and let $\chi$ and
$\chi'$ be defined as in the previous cases. If $\chi'$ decomposes
then the summands have even degree, due to the value of $\chi'$ on
$\sigma_2$ and its restriction to $\langle \sigma_2, \pi_1 \rangle$. But at the same time
the degree of a summand would be divisible by $5$, due to its character value $0$ on
$z$ and the character theory of the extraspecial $q$-group $\langle z, \tau_1, \sigma_1 \rangle $.

\item[Case 4:] $x,z \notin T$.\\
Set $S= \langle Z, \sigma_2, \tau_1, \rho_1, \pi_2 \rangle$. Then
$S' = \langle z\tau_1^3 \rangle$. Let again $T$, $\chi$ and
$\chi'$ have analogous properties as before such that $\sigma_2^q
\notin T$. Note that $\tau_1 \not \in T$. By Frobenius reciprocity and Clifford theory we have, considering the scalar product of characters,
$$\langle \chi' , \chi' \rangle_{Y(H)} = \sum_{g \in Y(H)/S} \langle \chi, \chi^g \rangle_S. $$
Now a system of coset representatives of $Y(H)/S$ is given by
$$\{\pi_1^i \rho_2^j \sigma_1^k \ | \ 0 \leq i \leq 1, \ 0 \leq j,k \leq q-1 \}.$$
Set $a_{i,j,k} = \pi_1^i \rho_2^j \sigma_1^k$. Then $\sigma_2^{a_{i,j,k}} = \sigma^{(1+qj)\cdot (-1)^i}$ and $\tau_1^{a_{i,j,k}} = z^k\tau_1$. Since $\langle \tau_1, \sigma_2 \rangle \cap T = 1$ we hence have $\chi^{a_{i,j,k}} = \chi$ if and only if $i = j = k = 0$. So $\chi'$ is irreducible.
\end{itemize}
\end{proof}

\begin{remark}\label{rem:OddCase}
The calculations of the cohomology groups for the groups $G$ and $H$ from Dade's example suggest
that if the groups are of odd order then it is very well possible
that $G \sim_F H$ over any field $F$. In the words of Passman, the
``surprise'' in the proof of Dade is the fact that $FG \cong FH$
for fields of characteristic $q$ and that ``this isomorphism is so
easily proved'' \cite[p. 664]{Passman}. This proof relies on the
fact that setting $e = \frac{1}{p}\sum_{i=0}^{p-1} \pi_1^{pi} $ in
$FG$ and $FH$ respectively, $FG$ and $FH$ are direct sums of
algebras isomorphic to $eFG$ and $eFH$ respectively. As $eFG \cong
F(G/\langle \pi_1^p \rangle) \cong F(H/\langle \pi_1^p \rangle)
\cong eFH$ the isomorphism of $FG$ and $FH$ follows immediately.

It seems impossible to imitate this argument in the twisted case,
since there is no natural idempotent in the twisted group ring of
a cyclic group corresponding to $e$. For example
$\mathbb{F}_5^\alpha C_4$ is a simple algebra isomorphic to
$\mathbb{F}_{5^4}$ for $[\alpha] \in H^2(G, \mathbb{F}_5^*)$ of
order $4$, so it has no quotients which ``kill'' exactly the
cyclic group of order $2$. This is a special instance of the fact
that a twisted group ring of $G$ has no ``obvious homomorphism''
\cite[p. 14]{Passman} to some twisted group ring of a given
quotient of $G$. So though $G \sim_F H$ might still be true for
any field $F$ the arguments to prove this would be different from
the argument of Dade.

Also Yamazaki covers can not bring the whole solution as $H^2(G,
F^*)$ can be infinite, e.g. for $F = \mathbb{Q}$, and
then no Yamazaki cover exists.
\end{remark}

\begin{remark}
The probably most famous example obtained in the study of the
classical group ring isomorphism problem is Hertweck's
counterexample to the integral isomorphism problem
\cite{Hertweck}. This counterexample consists of two
non-isomorphic groups $G$ and $H$ of order $2^{21}\cdot 97^{28}$
such that $\mathbb{Z}G \cong \mathbb{Z}H$. It is not clear to us
if there exists a ring $R$ such that $G \not\sim_R H$. But it is
clear that $RG\cong RH$ and $H^2(G,R^*)\cong H^2(H,R^*)$ for any
commutative ring $R$. This follows from the fact that $RG \cong R
\otimes_\mathbb{Z} \mathbb{Z}G$ and the functorial definition of
group cohomology, $H^n(G, M) \cong
\operatorname{Ext}^n_{\mathbb{Z}G}(\mathbb{Z}, G)$ for any
$G$-module $M$ and where $\operatorname{Ext}$ denotes the
$\operatorname{Ext}$-functor. So $H^2(G, M)$ depends only on the
group ring $\mathbb{Z}G$ and not $G$ itself. It would be very
interesting to determine if $G \sim_R H$ indeed holds
independently of $R$.
\end{remark}

\textbf{Acknowledgement:} We thank Yuval Ginosar for useful discussions.

\bibliographystyle{amsalpha}
\bibliography{TGRIP2}

\end{document}